\documentclass[10pt,letterpaper,twoside]{amsart}
\usepackage{charter, amsmath,amsbsy,amsfonts,amssymb,
stmaryrd,amsthm,mathrsfs,graphicx, amscd, cancel}

\usepackage[OT2,OT1]{fontenc}
 \newcommand\cyr
{
\renewcommand\rmdefault{wncyr} \renewcommand\sfdefault{wncyss} \renewcommand\encodingdefault{OT2} \normalfont
\selectfont
}
\DeclareTextFontCommand{\textcyr}{\cyr}    
\makeatletter
   \def\@settitle
{
\begin{center} \baselineskip14\p@\relax \LARGE\@title \end{center}
}
   \makeatother

\usepackage[normalem]{ulem}
\usepackage[mathscr]{eucal}
\numberwithin{equation}{section}

\setcounter{MaxMatrixCols}{20}

\usepackage[table]{xcolor}
\usepackage{verbatim}
\usepackage{amssymb}
\usepackage{amsbsy}
\usepackage{amscd}
\usepackage{amsmath}
\usepackage{amsthm, bm}
\usepackage[mathscr]{eucal}
\usepackage[all]{xy}
\usepackage{enumerate}
\usepackage{color, colortbl}
\usepackage{comment}

\input xyimport.tex

\newenvironment{spmatrix}{\left ( \begin{smallmatrix}} {\end{smallmatrix}\right)} 

\newenvironment{svmatrix}{\left | \begin{smallmatrix}} {\end{smallmatrix}\right|}

\title{\bf Key Varieties for 
Prime $\mQ$-Fano Threefolds\\
Related with $\mP^2\times \mP^2$-Fibrations. Part I}
\author{Hiromichi Takagi}

\address{Department of Mathematics, Gakushuin University, 1-5-1, Mejiro, Toshima, Tokyo, 171-8588 Japan}
\email{hiromici@math.gakushuin.ac.jp}

\pagestyle{myheadings}

\theoremstyle{plain}
\newtheorem{thm}{Theorem}[section]

\newtheorem{prop}[thm]{Proposition}

\newtheorem{cor}[thm]{Corollary}
\newtheorem{lem}[thm]{Lemma}
\newtheorem{cla}[thm]{Claim}

\theoremstyle{definition}
\newtheorem{defn}[thm]{Definition}

\newtheorem*{ackn}{Acknowledgement}

\theoremstyle{remark}
\newtheorem*{rem}{Remark}

\newcommand{\sO}{\mathcal{O}}

\newcommand{\mA}{\mathbb{A}}
\newcommand{\mC}{\mathbb{C}}

\newcommand{\mN}{\mathbb{N}}
\newcommand{\mP}{\mathbb{P}}
\newcommand{\mQ}{\mathbb{Q}}
\newcommand{\mZ}{\mathbb{Z}}

\newcommand{\Bs}{\mathrm{Bs}\,}

\newcommand{\Hom}{\mathrm{Hom}}

\newcommand{\Sing}{\mathrm{Sing}\,}

\newcommand{\GL}{\mathrm{GL}}

\newcommand{\rank}{\mathrm{rank}\,}

\newcommand{\SigmaP}{\Sigma^{12}_{\mP}}
\newcommand{\SigmaA}{\Sigma^{13}_{\mA}}
\numberwithin{equation}{section}

\begin{document}

\begin{abstract}
We construct a $14$-dimensional affine variety $\Sigma^{14}_{\mA}$ with a $\GL_3$- and a $(\mC^*)^6$-actions. We denote by $\Sigma^{13}_{\mA}$ the affine variety obtained from $\Sigma^{14}_{\mA}$ by setting one specified variable to $1$ (we refer the precise definition to Definition \ref{defn:Key} of the paper).
We show that several weighted projectivizations of $\Sigma^{13}_{\mA}$ and $\Sigma^{14}_{\mA}$ produce, as weighted complete intersections, examples of prime $\mQ$-Fano threefolds of codimension four belonging to $24$ classes of the graded ring database \cite{grdb}. Except No.360 in the database, these prime $\mQ$-Fano threefolds have a Type I Tom projection. Moreover, they are not weighted complete intersections of the cluster variety of type $C_2$ introduced by Coughlan and Ducat.
We also show that a partial projectivization of $\Sigma^{14}_{\mA}$ has a $\mP^2\times \mP^2$-fibration over the affine space $\mA^9$. 
\end{abstract}
\dedicatory{{Dedicated to Professor Shigefumi Mori on the occasion of
  his 70th birthday}}
\maketitle

\markboth{Key varieties}{Hiromichi Takagi}
{\small{\tableofcontents}}

\section{\bf Introduction}

\subsection{Background}
In this paper, we work over $\mC$, the complex number field. 

A complex projective variety is called a {\it $\mQ$-Fano variety} if it is a normal variety with only terminial singularities, and its anti-canonical divisor is ample. The classification of $\mQ$-Fano threefolds is one of the central problems in Mori theory for projective threefolds. Although the classification is far from completion, several systematic classification results have been obtained so far.
In the online database \cite{grdb}, a huge table of candidates of $\mQ$-Fano threefolds are given. 
One kind of the classification results is that with respect to the codimension $\rm{ac}_X$ of a $\mQ$-Fano threefold $X$ in the weighted projective space of the minimal dimension determined by the anti-canonical graded ring of $X$. We call $\rm{ac}_X$ {\it the anti-canonical codimension}. 
The invariant $\rm{ac}_X$ is expected not to be unreasonably big for {\it prime} $\mQ$-Fano threefolds, namely, those whose canonical divisors generate the groups of numerical equivalence classes of $\mQ$-Cartier divisors. Hereafter, we focus on prime $\mQ$-Fano threefolds. For prime $\mQ$-Fano threefolds $X$ with $\rm{ac}_X\leq 3$, the classification was completed if $X$ are quasi-smooth, namely, the affine cones of $X$ in the affine spaces associated to the anti-canonical graded rings are smooth outside the origins; Fletcher \cite{Fl} and Chen-Chen-Chen \cite{CCC} did in the case of $\rm{ac}_X\leq 2$, and Alt{\i}nok \cite{Al} did in the case of $\rm{ac}_X=3$. 

Now let us consider prime $\mQ$-Fano threefolds $X$ with $\rm{ac}_X=4$. In this case, the classification is not completed mainly because there is no suitable general structure theorem for Gorenstein graded rings of codimension four (see \cite{Reidcodim4} for an attempt to obtain such a structure theorem by Reid) while there are such structure theorems in the case of codimension less than or equal to three; see \cite{codim2} in the case of codimension $\leq 2$ and \cite{BE} in the case of codimension $=3$.
In \cite{bkr}, Brown-Kerber-Reid constructed examples of prime $\mQ$-Fano threefolds $X$ with $\rm{ac}_X=4$ and with so called Type I projections (see Definition \ref{defn:typeI} for the definition) by the theory of unprojection developed by Papadakis-Reid \cite{WithoutC} (see also Pakadakis' papers \cite{PHD}, \cite{WithC}).
Recently, in \cite{CD, bigtables}, Coughlan and Ducat reproduced examples of prime $\mQ$-Fano threefolds $X$ with $\rm{ac}_X=4$ and with Type I projections in a different way as weighted complete intersections in weighted projectivizations of the cluster varieties of types $C_2$ and $G_2^{(4)}$, which are related with $\mP^2\times \mP^2$- and $\mP^1\times \mP^1\times \mP^1$-fibrations, respectively (see \cite{CD} for the definition of these cluster varieties, which we do not review in this paper). This type of construction is modeled after the classification of smooth prime Fano threefolds by Mukai \cite{Mu}. The idea of them is describing $\mQ$-Fano threefolds $X$ in appropriate projective varieties $\Sigma$ of larger dimensions and usually with larger symmetries; in the case of \cite{Mu}, $\Sigma$ are rational homogeneous spaces, and in the case of \cite{CD}, $\Sigma$ are cluster varieties. Such $\Sigma$ are informally called {\it key variety} as in the title of this paper. One pleasant byproduct of Coughlan and Ducat's key variety construction of prime $\mQ$-Fano threefolds $X$ is that even if $X$ themselves have no Type I projections, examples of $X$ in many cases can be constructed beyond \cite{bkr} from
the cluster varieties of types $C_2$ and $G_2^{(4)}$; the cluster varieties of these types themselves have Type I Tom and Jerry projections, respectively.

The purpose of this paper and the sequels is to present other key varieties for prime $\mQ$-Fano threefolds $X$ with $\rm{ac}_X=4$.
In particular, in this paper, we construct examples of prime $\mQ$-Fano threefolds $X$ with $\rm{ac}_X=4$ using our key varieties
which cannot be obtained as weighted complete intersections in weighted projectivizations of the cluster variety of type 
$C_2$ nor $G_2^{(4)}$.
In the next subsection, we define our key varieties.

\subsection{Key varieties $\Sigma^{14}_{\mA}$ and $\Sigma^{13}_{\mA}$}
\label{MainC}
\begin{defn}
\label{defn:Key}
We consider the $18$-dimensional affine space $\mA^{18}$ with coordinates
implemented in the following form with vectors and matrices: 
\begin{align*}
&\bm{p}:=\begin{spmatrix} p_1 \\ p_2 \\ p_3 \end{spmatrix},p_4,\,
\bm{q}:=\begin{spmatrix} q_1 \\ q_2 \\ q_3 \end{spmatrix}, r,u,S:=\begin{spmatrix}
s_{11} & s_{12} & s_{13}\\
s_{12} & s_{22} & s_{23}\\
s_{13} & s_{23} & s_{33}
\end{spmatrix}, \bm{t}:=\begin{spmatrix} t_1 \\ t_2 \\ t_3 \end{spmatrix}.
\end{align*}
We also set
\[
A:=\begin{spmatrix} 0 & -q_3 & q_2\\
                   q_3 & 0 & -q_1\\
                  -q_2 & q_1 & 0
\end{spmatrix}.
\]
It is useful to note the following formulas:
\[
AS=\begin{spmatrix}
-\begin{svmatrix} s_{12} & s_{13} \\ q_2 & q_3 \end{svmatrix}
& -\begin{svmatrix} s_{22} & s_{23} \\ q_2 & q_3 \end{svmatrix}
& -\begin{svmatrix} s_{23} & s_{33} \\ q_2 & q_3 \end{svmatrix}\\
\begin{svmatrix} s_{11} & s_{13} \\ q_1 & q_3 \end{svmatrix}
& \begin{svmatrix} s_{12} & s_{23} \\ q_1 & q_3 \end{svmatrix}
& \begin{svmatrix} s_{13} & s_{33} \\ q_1 & q_3 \end{svmatrix}\\
-\begin{svmatrix} s_{11} & s_{12} \\ q_1 & q_2 \end{svmatrix}
& -\begin{svmatrix} s_{12} & s_{22} \\ q_1 & q_2 \end{svmatrix}
& -\begin{svmatrix} s_{13} & s_{23} \\ q_1 & q_2 \end{svmatrix}
\end{spmatrix},\, 
A\bm{t}=\begin{spmatrix}
-\begin{svmatrix} q_2 & q_3\\ t_2 & t_3 \end{svmatrix}\\
\begin{svmatrix} q_1 & q_3\\ t_1 & t_3 \end{svmatrix}\\
-\begin{svmatrix} q_1 & q_2\\ t_1 & t_2 \end{svmatrix}
\end{spmatrix}.
\]
\begin{enumerate}[(1)]
\item
In the affine space $\mA^{18}$ with the above coordinates, we define an affine scheme $\Sigma^{14}_{\mA}$ by the following nine equations in 
(\ref{Key1}) and (\ref{Key2}):
\begin{align}
&\begin{cases}
{\empty^t \bm{p}}{\bm{q}}=0,\\
\left(rI+AS\right)\bm{p}=-p_4 A\bm{t},\text{where $I$ is the $3\times 3$ identity matrix}, \\
{\empty^t \bm{p}}S\bm{p}+p_4 {\empty^t \bm{p}}\bm{t}=0,
\end{cases}\label{Key1}\\
&\begin{cases}
u\bm{p}=\left(rI-AS\right)A\bm{t}, \\ 
up_4=-\left(r^2+{\empty^t \bm{q}}S^{\dagger} \bm{q}\right),
\end{cases}\label{Key2}
\end{align}
where $S^{\dagger}$ is the adjoint matrix of $S$.
We define the following four polynomials $F_1,\dots, F_4$:
\begin{equation}
\begin{spmatrix}
F_1\\ F_2 \\F_3
\end{spmatrix}:=\left(rI-AS\right)A\bm{t},\, F_4:=-\left(r^2+{\empty^t \bm{q}}S^{\dagger} \bm{q}\right),\label{F1F4}
\end{equation}

\item
We denote by $\Sigma^{13}_{\mA}$ the affine scheme obtained from
$\Sigma^{14}_{\mA}$ by setting $s_{33}=1$.
\end{enumerate}
\end{defn}
\vspace{3pt}
It is straightforward to see that
the five equations
in (\ref{Key1})
are the vanishing of
the five $4\times 4$ Pfaffians of the following skew-symmetric matrix:

\begin{equation}\label{eq:Pf}
\begin{spmatrix}
0 & q_3 & -q_2 & q_1 & r\\
  & 0 & p_1 & p_2 & a_{25}\\
  &   & 0 & p_3 & a_{35}\\
  &   &   & 0         & a_{45}\\
  &   &   &           & 0 
\end{spmatrix},
\end{equation}
where
\[
\begin{spmatrix}
a_{25}\\
a_{35}\\
a_{45}
\end{spmatrix}
=\begin{spmatrix}
s_{13} & s_{23} & s_{33}\\
-s_{12} & -s_{22} & -s_{23}\\
s_{11} & s_{12} & s_{13}\\
\end{spmatrix}
\begin{spmatrix}
p_1\\
p_2\\
p_3
\end{spmatrix}
+p_4
\begin{spmatrix}
t_3\\
-t_2\\
t_1
\end{spmatrix}.
\]
Thus the affine scheme $\Sigma_{\rm{Tom}}$ defined by these five $4\times 4$ Pfaffians
is a closed subscheme of 
the generic integral Tom defined as in \cite[\S 5.1]{WithC},
where the variables
$x_1,x_2,x_3,x_4$ (resp.~$z_1,z_2,z_3,z_4$) there correspond to $q_3, -q_2,q_1, r$ (resp.~$p_1, p_2,p_3, p_4$) in our format, and
$a^k_{ij}$'s there correspond to $0$, $1$, $s_{kl}$'s, and $t_m$'s in our format with suitable orders. 
Moreover, 
by straightforward calculations following the recipe as in \cite[\S 5.3]{WithC},
we see that the polynomials $g_1,\dots, g_4$ there correspond to
$F_1,\dots, F_4$ in our format after changing their signs and orders. 
Using these descriptions of $\Sigma^{14}_{\mA}$ and 
following the proof of \cite[Thm.~5.6]{WithC},
we see in the subsection \ref{sub:Tom14}
(Proposition \ref{prop:Tom14}) that $\Sigma^{14}_{\mA}$ is a Gorenstein variety of codimension four in $\mA^{18}$.
Moreover, we show that $\Sigma^{14}_{\mA}$ has only terminal singularities (Proposition \ref{prop:SigmaTerm}).

We also see that $\Sigma^{13}_{\mA}$ has similar properties since the open subset $\Sigma^{14}_{\mA}\cap \{s_{33}\not =0\}$ of $\Sigma^{14}_{\mA}$ is isomorphic to $\Sigma^{13}_{\mA}\times \mA^{1*}$ (Lemma \ref{lem:1314}).

\subsection{Main theorem}

The main result of this paper is the following theorem, for which we use 
for the classes of $\mQ$-Fano threefolds
the assigned numbers in the database \cite{grdb}. We prepare one convention;
by a {\it numerical data} for a prime $\mQ$-Fano threefold $X$ of No.$*$, we mean the quadruplet consisting of the Fano index of $X$, the genus $g(X):=h^0(-K_X)-2$, the basket of singularities of $X$, and the Hilbert numerator of $X$ with respect to $-K_X$, which are given in the database \cite{grdb} (note that the degree of $X$ with respect to $-K_X$ can be read off from the Hilbert numerator of $X$ once we know the ambient weighted projective space of $X$).

\begin{thm}
\label{thm:main}
The following assertions hold$:$
\begin{enumerate}[$(1)$]
\item \begin{enumerate}[$({1}\text{-}1)$]
\item For each of $23$ numerical data except No.24078 (resp.~for No.24078) with the numbers as in Tables \ref{Table 1} and \ref{Table 2}, there exists a quasi-smooth prime $\mQ$-Fano threefold $X$ with $\rm{ac}_X=4$ obtained as a weighted complete intersection from the weighted projectivization 
$\Sigma^{12}_{\mP}$ (resp.$\Sigma^{13}_{\mP}$) of $\Sigma^{13}_{\mA}$ (resp.~$\Sigma^{14}_{\mA}$) as in Table \ref{Table 2} with the weights of coordinates given in Table \ref{Table 1}. 
\item Any such an $X$ not of No.360, No.1218 nor No.24078 (resp. for No.24078) has the Type I Tom projection induced by the projection of $\Sigma^{12}_{\mP}$ (resp.$\Sigma^{13}_{\mP}$) from the $u$-point, where the $u$-point means the coordinate point of $\Sigma^{12}_{\mP}$ or $\Sigma^{13}_{\mP}$ such that only the coordinate $u$ is nonzero (we use this convention for other coordinates).
\item Any such an $X$ of No.1218 has the Type I Tom projection from each of the two $1/5(1,2,3)$-singularities.
\item Any such an $X$ of No.360 has the projection to a codimension two weighted complete intersection $(14,16)\subset \mP(1,4,5,6,7,8)$ with the unique $1/7 (1,2,5)$-singularity being the center; the $1/7 (1,2,5)$-singularity coincides with the $p_1$-point and the projection is induced by the projection of $\Sigma^{12}_{\mP}$ with the $p_1$-point being the center. 
\end{enumerate}
\item 
A general member $T$ of $|-K_X|$ for a general $X$ as in (1) is a quasi-smooth $K3$ surface with only Du Val singularities of type $A$.
Moreover, $T$ has only singularities at points where $X$ is singular and if $X$ has a $1/\alpha\, (\beta,-\beta,1)$-singularity at a point for some $\alpha,\beta\in \mN$ with $(\alpha,\beta)=1$, then $T$ has a $1/\alpha\, (\beta,-\beta)$-singularity there.
\end{enumerate}

\renewcommand{\arraystretch}{2}

{{
\begin{table}[p]
\rowcolors{2}{}{yellow!20}
\caption{Weights of coordinates}
\label{Table 1}
\begin{tabular}{|c|c|c|c|c|c|c|c|} \hline
No.& 
$w(\bm{p})$ & 
$w(p_4)$ & 
$w(\bm{q})$ &
$w(r)$ &
$w(u)$ &
$w(S)$ &
$w(\bm{t})$
\\ \hline \hline
360 &
$\begin{spmatrix}
7 \\ 8 \\ 9 
\end{spmatrix}$ & 
$6$ & 
$\begin{spmatrix}
6 \\5  \\ 4 
\end{spmatrix}$ &
$7$ &
$8$ &
$\begin{spmatrix}
4 & 3 & 2 \\
         &2  &1 \\
         &           & 0 
\end{spmatrix}$ &
$\begin{spmatrix}
 5 \\4  \\3 
\end{spmatrix}$\\\hline
393 &
$\begin{spmatrix}
6 \\ 7 \\ 8 
\end{spmatrix}$ & 
$5$ & 
$\begin{spmatrix}
6 \\5  \\ 4 
\end{spmatrix}$ &
$7$ &
$9$ &
$\begin{spmatrix}
4 & 3 & 2 \\
         &2  &1 \\
         &           & 0 
\end{spmatrix}$ &
$\begin{spmatrix}
 5 \\4  \\3 
\end{spmatrix}$\\\hline
569 &
$\begin{spmatrix}
5 \\ 6 \\ 7 
\end{spmatrix}$ & 
$3$ & 
$\begin{spmatrix}
5 \\4  \\ 3 
\end{spmatrix}$ &
$6$ &
$9$ &
$\begin{spmatrix}
4 & 3 & 2 \\
         &2  &1 \\
         &           & 0 
\end{spmatrix}$ &
$\begin{spmatrix}
 6 \\5  \\4 
\end{spmatrix}$\\\hline
574&
$\begin{spmatrix}
5 \\6  \\ 7 
\end{spmatrix}$ & 
$5$ & 
$\begin{spmatrix}
 5 \\ 4 \\3 
\end{spmatrix}$ &
$6$ &
$7$ &
$\begin{spmatrix}
4 & 3 & 2\\
        & 2 &1 \\
         &           &0 
\end{spmatrix}$ &
$\begin{spmatrix}
 4\\ 3 \\ 2 
\end{spmatrix}$\\\hline
642 &
$\begin{spmatrix}
4 \\5  \\6 
\end{spmatrix}$ & 
$3$ & 
$\begin{spmatrix}
6 \\ 5 \\ 4
\end{spmatrix}$ &
$7$ &
$11$ &
$\begin{spmatrix}
4 & 3 & 2 \\
         & 2 &1 \\
         &   & 0
\end{spmatrix}$ &
$\begin{spmatrix}
5 \\4  \\3 
\end{spmatrix}$\\\hline
644 &
$\begin{spmatrix}
4 \\5  \\6 
\end{spmatrix}$ & 
$4$ & 
$\begin{spmatrix}
6 \\5  \\4 
\end{spmatrix}$ &
$7$ &
$10$ &
$\begin{spmatrix}
4 & 3 & 2\\
         & 2 &1 \\
         &  & 0
\end{spmatrix}$ &
$\begin{spmatrix}
4 \\3  \\2 
\end{spmatrix}$\\\hline
1091 &
$\begin{spmatrix}
6 \\ 7 \\ 8 
\end{spmatrix}$ & 
$5$ & 
$\begin{spmatrix}
6  \\5  \\ 4 
\end{spmatrix}$ &
$7$ &
$9$ &
$\begin{spmatrix}
4 & 3 & 2\\
         & 2 & 1\\
         &  &0 
\end{spmatrix}$ &
$\begin{spmatrix}
5 \\4  \\3 
\end{spmatrix}$\\\hline
1181 &
$\begin{spmatrix}
3 \\4  \\5 
\end{spmatrix}$ & 
$2$ & 
$\begin{spmatrix}
6 \\5  \\4 
\end{spmatrix}$ &
$7$ &
$12$ &
$\begin{spmatrix}
4& 3 & 2\\
         & 2 & 1\\
         &  & 0
\end{spmatrix}$ &
$\begin{spmatrix}
5 \\4  \\3 
\end{spmatrix}$\\\hline
1185 &
$\begin{spmatrix}
4 \\5  \\6 
\end{spmatrix}$ & 
$2$ & 
$\begin{spmatrix}
4 \\ 3 \\2 
\end{spmatrix}$ &
$5$ &
$8$ &
$\begin{spmatrix}
4 & 3 & 2 \\
         & 2 & 1 \\
         &  &0 
\end{spmatrix}$ &
$\begin{spmatrix}
6 \\5  \\4 
\end{spmatrix}$\\\hline

1186 &
$\begin{spmatrix}
4 \\5  \\6 
\end{spmatrix}$ & 
$3$ & 
$\begin{spmatrix}
4 \\3  \\2 
\end{spmatrix}$ &
$5$ &
$7$ &
$\begin{spmatrix}
4& 3 & 2\\
         & 2 & 1\\
         &  & 0
\end{spmatrix}$ &
$\begin{spmatrix}
5 \\4  \\3 
\end{spmatrix}$\\\hline
1218 &
$\begin{spmatrix}
4 \\ 5 \\6 
\end{spmatrix}$ & 
$5$ & 
$\begin{spmatrix}
4 \\3  \\2 
\end{spmatrix}$ &
$5$ &
$5$ &
$\begin{spmatrix}
4& 3 & 2\\
         & 2 & 1\\
         &           &0 
\end{spmatrix}$ &
$\begin{spmatrix}
3 \\2  \\1 
\end{spmatrix}$\\\hline
1253 &
$\begin{spmatrix}
4 \\4  \\5 
\end{spmatrix}$ & 
$3$ & 
$\begin{spmatrix}
 4\\ 4  \\ 3 
\end{spmatrix}$ &
$5$ &
$7$ &
$\begin{spmatrix}
2& 2 & 1\\
        & 2 & 1\\
         &           &0 
\end{spmatrix}$ &
$\begin{spmatrix}
3 \\3  \\2 
\end{spmatrix}$\\\hline
1413 &
$\begin{spmatrix}
4 \\4  \\5 
\end{spmatrix}$ & 
$3$ & 
$\begin{spmatrix}
3 \\3  \\2 
\end{spmatrix}$ &
$4$&
$5$ &
$\begin{spmatrix}
2& 2 & 1\\
         &2  &1 \\
         &           &0 
\end{spmatrix}$ &
$\begin{spmatrix}
 3\\3  \\2 
\end{spmatrix}$\\\hline
2422
&
$\begin{spmatrix}
2 \\3  \\4 
\end{spmatrix}$ & 
$3$ & 
$\begin{spmatrix}
4 \\3  \\2 
\end{spmatrix}$ &
$5$ &
$7$ &
$\begin{spmatrix}
4& 3 & 2\\
         &2  & 1\\
         &           &0 
\end{spmatrix}$ &
$\begin{spmatrix}
3 \\2  \\1 
\end{spmatrix}$\\\hline
4850&
$\begin{spmatrix}
4 \\5  \\6 
\end{spmatrix}$ & 
$1$ & 
$\begin{spmatrix}
6 \\5  \\4 
\end{spmatrix}$ &
$7$ &
$13$ &
$\begin{spmatrix}
4& 3 & 2\\
         & 2 & 1\\
         &           &0 
\end{spmatrix}$ &
$\begin{spmatrix}
7 \\6  \\5 
\end{spmatrix}$\\\hline
4938&
$\begin{spmatrix}
3 \\4  \\5 
\end{spmatrix}$ & 
$1$ & 
$\begin{spmatrix}
5 \\4  \\3 
\end{spmatrix}$ &
$6$ &
$11$ &
$\begin{spmatrix}
4& 3 & 2\\
         & 2 & 1\\
         &           &0 
\end{spmatrix}$ &
$\begin{spmatrix}
6 \\5  \\4 
\end{spmatrix}$\\\hline
5202&
$\begin{spmatrix}
2 \\3  \\4 
\end{spmatrix}$ & 
$1$ & 
$\begin{spmatrix}
4 \\3  \\2 
\end{spmatrix}$ &
$5$&
$9$&
$\begin{spmatrix}
4& 3 & 2\\
         & 2 & 1\\
         &           &0 
\end{spmatrix}$ &
$\begin{spmatrix}
5 \\4  \\3 
\end{spmatrix}$\\\hline
5859&
$\begin{spmatrix}
1 \\2  \\3 
\end{spmatrix}$ & 
$2$ & 
$\begin{spmatrix}
4 \\3  \\2 
\end{spmatrix}$ &
$5$&
$8$ &
$\begin{spmatrix}
4& 3 & 2\\
         & 2 &1 \\
         &           &0 
\end{spmatrix}$ &
$\begin{spmatrix}
3 \\2  \\1 
\end{spmatrix}$\\\hline
5866&
$\begin{spmatrix}
2 \\2  \\3 
\end{spmatrix}$ & 
$1$ & 
$\begin{spmatrix}
 3\\3  \\2 
\end{spmatrix}$ &
$4$&
$7$&
$\begin{spmatrix}
2& 2 &1 \\
         &2  &1 \\
         &           &0 
\end{spmatrix}$ &
$\begin{spmatrix}
3 \\3  \\2 
\end{spmatrix}$\\\hline
6860&
$\begin{spmatrix}
2 \\2  \\3 
\end{spmatrix}$ & 
$1$ & 
$\begin{spmatrix}
 2\\2  \\1 
\end{spmatrix}$ &
$3$&
$5$&
$\begin{spmatrix}
2& 2 & 1\\
         &2  &1 \\
         &           &0 
\end{spmatrix}$ &
$\begin{spmatrix}
3 \\3  \\2 
\end{spmatrix}$\\\hline
6865&
$\begin{spmatrix}
2 \\ 2 \\3 
\end{spmatrix}$ & 
$2$ & 
$\begin{spmatrix}
2 \\2  \\1 
\end{spmatrix}$ &
$3$&
$4$ &
$\begin{spmatrix}
2& 2 &1 \\
         &2  &1 \\
         &           &0 
\end{spmatrix}$ &
$\begin{spmatrix}
2 \\2  \\1 
\end{spmatrix}$\\\hline
11004&
$\begin{spmatrix}
1 \\2  \\3 
\end{spmatrix}$ & 
$1$ & 
$\begin{spmatrix}
3 \\2  \\1 
\end{spmatrix}$ &
$4$ &
$7$ &
$\begin{spmatrix}
4& 3 & 2\\
         & 2 & 1\\
         &           &0 
\end{spmatrix}$ &
$\begin{spmatrix}
4 \\3  \\2 
\end{spmatrix}$\\\hline
16227&
$\begin{spmatrix}
1 \\1  \\2 
\end{spmatrix}$ & 
$1$& 
$\begin{spmatrix}
2 \\2  \\1 
\end{spmatrix}$ &
$3$&
$5$&
$\begin{spmatrix}
2& 2 & 1\\
         & 2 & 1\\
         &           &0 
\end{spmatrix}$ &
$\begin{spmatrix}
2 \\2  \\1 
\end{spmatrix}$\\\hline
24078&
$\begin{spmatrix}
1 \\1  \\1 
\end{spmatrix}$ & 
$1$ & 
$\begin{spmatrix}
1 \\1  \\1 
\end{spmatrix}$ &
$2$&
$3$ &
$\begin{spmatrix}
1& 1 & 1\\
         & 1 & 1\\
         &           &1 
\end{spmatrix}$ &
$\begin{spmatrix}
1 \\1  \\1 
\end{spmatrix}$\\\hline
\end{tabular}
\label{tab:keyvarwt}
\end{table}
}}

\vspace{3pt}

\noindent {\bf Notation for Tables $\ref{Table 1}$ and $\ref{Table 2}$.}
\begin{align*}
&w(*):=\text{the weight of the coordinate $*$},\\
&w(\bm{p}):=\begin{spmatrix}
w(p_1) \\ w(p_2) \\ w(p_3)
\end{spmatrix},
w(\bm{q}):=\begin{spmatrix}
w(q_1) \\ w(q_2) \\ w(q_3)
\end{spmatrix},\\
&w(S):=\begin{spmatrix}
w(s_{11})& w(s_{12}) & w(s_{13})\\
         & w(s_{22}) & w(s_{23})\\
         &           & w(s_{33})
\end{spmatrix},
w(\bm{t}):=\begin{spmatrix}
w(t_1) \\ w(t_2)\\ w(t_3)
\end{spmatrix},\\
& \mP_X:=\text{the ambient weighted projective space of $X$},\\
& \Sigma \ \text{in the 4th column of Table $\ref{Table 2}:=$ the abbreviation of $\Sigma^{12}_{\mP}$ or $\Sigma^{13}_{\mP}$}.
\end{align*}
{\renewcommand{\arraystretch}{2}
\begin{table}[b]
\rowcolors{2}{}{yellow!20}
\caption{Descriptions of $\mQ$-Fano threefolds $X$}\label{}
\label{Table 2}
\centering
\scalebox{0.8}
{
\begin{tabular}{|c|c|c|c|} \hline
No.& 
$\mP_X$  & 
Baskets of singularities & 
$X\subset \Sigma$ 
\\ \hline \hline
360& $\mP(1,4,5,6,7^2,8,9)$ & $\{2\times 1/4(1,1,3),1/6(1,1,5),1/7(1,2,5)\}$ & $\Sigma\cap (2)^2\cap (3)^2\cap (4)^2\cap (5)\cap (6)\cap (8)$ \\
\hline
393& $\mP(1,4,5^2,6,7,8,9)$ & $\{1/2(1,1,1),1/5(1,1,4),1/5(1,2,3),1/9(1,4,5)\}$ & $\Sigma\cap (2)^2\cap (3)^2\cap (4)^2\cap (5)\cap (6)\cap (7)$  \\
\hline
569& $\mP(1,3,4,5^2,6,7,9)$ & $\{2\times 1/3(1,1,2),1/5(1,2,3),1/9(1,4,5)\}$ & $\Sigma\cap (2)^2\cap (3)^2\cap (4)^2\cap (5)\cap (6)^2$ \\
\hline
574& $\mP(1,3,4,5^2,6,7^2)$ & $\{1/3(1,1,2),1/5(1,1,4),1/5(1,2,3),1/7(1,3,4)\}$ & $\Sigma\cap (2)^3\cap (3)^2\cap (4)^2\cap (5)\cap (6)$  \\
\hline
642& $\mP(1,3,4^2,5,6,7,11)$ & $\{1/2(1,1,1),1/3(1,1,2),1/4(1,1,3),1/11(1,4,7)\}$ & $\Sigma\cap (2)^2\cap (3)^2\cap (4)^2\cap (5)^2\cap (6)$  \\
\hline
644& $\mP(1,3,4^2,5,6,7,10)$ & $\{1/2(1,1,1),2\times 1/4(1,1,3),1/10(1,3,7)\}$ & $\Sigma\cap (2)^3\cap (3)\cap (4)^3\cap (5)\cap (6)$  \\
\hline
1091& $\mP(1,2,5,6,7^2,8,9)$ & $\{2\times 1/2(1,1,1),1/7(1,1,6),1/9(1,2,7)\}$ & $\Sigma\cap (2)\cap (3)^2\cap (4)^3\cap (5)^2\cap (6)$  \\
\hline
1181& $\mP(1,2,3,4,5^2,7,12)$ & $\{1/2(1,1,1),1/4(1,1,3),1/12(1,5,7)\}$ & $\Sigma\cap (2)^2\cap (3)^2\cap (4)^3\cap (5)\cap (6)$  \\
\hline
1185& $\mP(1,2,3,4,5^2,6,8)$ & $\{3\times 1/2(1,1,1),1/5(1,1,4),1/8(1,3,5)\}$ & $\Sigma\cap (2)^3\cap (3)\cap (4)^3\cap (5)\cap (6)$  \\
\hline
1186& $\mP(1,2,3,4,5^2,6,7)$ & $\{2\times 1/2(1,1,1),1/3(1,1,2),1/5(1,1,4),1/7(1,2,5)\}$ & $\Sigma\cap (2)^2\cap (3)^3\cap (4)^3\cap (5)$  \\
\hline
1218& $\mP(1,2,3,4,5^3,6)$ & $\{2\times 1/2(1,1,1),1/5(1,1,4),2\times 1/5(1,2,3)\}$ & $\Sigma\cap (1)\cap (2)^3\cap (3)^2\cap (4)^2\cap (5)$  \\
\hline
1253& $\mP(1,2,3,4^2,5^2,7)$ & $\{2\times 1/2(1,1,1),2\times 1/4(1,1,3),1/7(1,2,5)\}$ & $\Sigma\cap (1)\cap (2)^3\cap (3)^3\cap (4)^2$  \\
\hline
1413& $\mP(1,2,3^2,4^2,5^2)$ & $\{2\times 1/2(1,1,1),2\times 1/3(1,1,2),1/4(1,1,3), 1/5(1,2,3)\}$ & $\Sigma\cap (1)\cap (2)^4\cap (3)^3\cap (4)$  \\
\hline
2422& $\mP(1,2^2,3^2,4,5,7)$ & $\{4\times 1/2(1,1,1),1/3(1,1,2),1/7(1,2,5)\}$ & $\Sigma\cap (1)\cap (2)^3\cap (3)^3\cap (4)^2$  \\
\hline\hline
4850& $\mP(1^2,4,5,6^2,7,13)$ & $\{1/2(1,1,1),1/13(1,6,7)\}$ & 
$\Sigma\cap (2)^2\cap (3)\cap (4)^2\cap (5)^2\cap (6)\cap (7)$  \\
\hline
4938& $\mP(1^2,3,4,5^2,6,11)$ & $\{1/3(1,1,2),1/11(1,5,6)\}$ & 
$\Sigma\cap (2)^2\cap (3)^2\cap (4)^3\cap (5)\cap (6)$  \\
\hline
5202& $\mP(1^2,2,3,4^2,5,9)$ & $\{2\times 1/2(1,1,1),1/9(1,4,5)\}$ & $\Sigma\cap (2)^3\cap (3)^3\cap (4)^2\cap (5)$  \\
\hline
5859& $\mP(1^2,2^2,3^2,5,8)$ & $\{2\times 1/2(1,1,1),1/8(1,3,5)\}$ & $\Sigma\cap (1)\cap (2)^4\cap (3)^2\cap (4)^2$  \\
\hline
5866& $\mP(1^2,2^2,3^2,4,7)$ & $\{3\times 1/2(1,1,1),1/7(1,3,4)\}$ & $\Sigma\cap (1)\cap (2)^5\cap (3)^3$  \\
\hline
6860& $\mP(1^2,2^3,3^2,5)$ & $\{4\times 1/2(1,1,1), 1/5(1,2,3)\}$ & $\Sigma\cap (1)^2\cap (2)^5\cap (3)^2$  \\
\hline
6865& $\mP(1^2,2^3,3^2,4)$ & $\{5\times 1/2(1,1,1),1/4(1,1,3)\}$ & $\Sigma\cap (1)^2\cap (2)^7$  \\
\hline\hline
11004& $\mP(1^3,2,3^2,4,7)$ & $\{1/7(1,3,4)\}$ & $\Sigma\cap (1)\cap (2)^4\cap (3)^2\cap (4)^2$  \\
\hline
16227& $\mP(1^4,2^2,3,5)$ & $\{1/5(1,2,3)\}$ & $\Sigma\cap (1)^3\cap (2)^6$  \\
\hline
24078& $\mP(1^6,2,3)$ & $\{1/3(1,1,2)\}$ & $\Sigma\cap (1)^{10}$  \\
\hline
\end{tabular}
}
\end{table}
}
\end{thm}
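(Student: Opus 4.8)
The plan is to treat the $24$ numerical data of Tables \ref{Table 1} and \ref{Table 2} in one uniform framework, using the already established facts that $\Sigma^{14}_{\mA}$ and $\Sigma^{13}_{\mA}$ are Gorenstein of codimension four with only terminal singularities along explicitly understood loci (Propositions \ref{prop:Tom14} and \ref{prop:SigmaTerm}, Lemma \ref{lem:1314}). For each No.$*$ the first step is a homogeneity check: with the weights of Table \ref{Table 1}, verify that the five $4\times 4$ Pfaffians of (\ref{eq:Pf}) and the four relations $u\bm{p}=(F_1,F_2,F_3)$, $up_4=F_4$ are weighted homogeneous, reading off along the way the weights of the entries of (\ref{eq:Pf}) and of $F_1,\dots,F_4$; this fixes the grading of $\Sigma^{12}_{\mP}$ (resp.~$\Sigma^{13}_{\mP}$). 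Homogeneity forces $w(s_{33})=0$ in every case but No.24078, which is why $X$ is cut from the projectivization $\Sigma^{12}_{\mP}$ of $\Sigma^{13}_{\mA}$ (i.e.\ $s_{33}=1$) except for No.24078, where $\Sigma^{13}_{\mP}$ is used. Then $\Sigma^{12}_{\mP}$ and $\Sigma^{13}_{\mP}$ are Gorenstein, quasi-smooth projective varieties with only cyclic quotient terminal singularities along explicit strata, and $\omega_\Sigma=\sO_\Sigma(\delta_\Sigma-\sigma)$, where $\delta_\Sigma$ is the degree of the (palindromic) Hilbert numerator of $\Sigma$ and $\sigma$ the sum of the coordinate weights; this is the input for adjunction.

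Next, for each No.$*$ let $X=\Sigma\cap\{f_1=\dots=f_c=0\}$ be the intersection of $\Sigma$ with a general regular sequence of forms of the degrees prescribed in the last column of Table \ref{Table 2}, where $c=\dim\Sigma-3$; since $\Sigma$ is Cohen--Macaulay, $X$ has pure dimension three, and on affine cones $\widehat X=\widehat\Sigma\cap\{f_1=\dots=f_c=0\}$ with $\widehat\Sigma=\Sigma^{13}_{\mA}$ or $\Sigma^{14}_{\mA}$. The main work is quasi-smoothness of $X$, carried out stratum by stratum over the coordinate faces of $\mP_X$: on the open part of $\widehat\Sigma$ where it is smooth and the relevant systems are base-point free, a general choice of the $f_j$ keeps $\widehat X$ smooth by Bertini; at each coordinate point and coordinate line of $\mP_X$ meeting $X$, one inspects which local coordinates occur to first order in the ideal of $\widehat X$ and concludes that $\widehat X$ is smooth there away from the vertex, or else records the resulting cyclic quotient singularity. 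This at the same time identifies $\Sing X$ with a finite set of coordinate points whose types, read off from the transverse weights, must match the baskets of Table \ref{Table 2}; terminality is then automatic, each such singularity being of the form $1/\alpha\,(\beta,-\beta,1)$. Normality of $X$ follows from quasi-smoothness and well-formedness, irreducibility from ampleness of $\sO_\Sigma(1)$ on the irreducible $\Sigma$, and $-K_X$ ample from $\omega_X=\sO_X(\delta_\Sigma-\sigma+\sum_{j}d_j)$ by adjunction, which by the numerics built into Table \ref{Table 2} equals $\sO_X(-f)$ with $f$ the listed Fano index. Finally $\Hilb(\widehat X,t)=\Hilb(\widehat\Sigma,t)\prod_{j}(1-t^{d_j})$, rewritten over the product of the $(1-t^{w})$ with $w$ ranging over the weights of $\mP_X$, reproduces the Hilbert numerator of the database entry No.$*$, so that $\rm{ac}_X=\codim(X\subset\mP_X)=4$ (the codimension-four Tom format of $X$ in $\mP_X$ being inherited from that of $\Sigma$). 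Primality of $X$ --- that the group of numerical classes of $\mQ$-Cartier Weil divisors is generated by $K_X$ --- I would deduce, following the approach in \cite{bkr}, from the divisor class group of the key variety being infinite cyclic and generated by $\sO_\Sigma(1)$, transferred to $X$ by a Lefschetz-type theorem for the class group.

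For the projection statements in part $(1)$ I would use the remark following Definition \ref{defn:Key}: $\Sigma^{14}_{\mA}$ sits inside a generic integral Tom of \cite[\S 5]{WithC}, with $z_i$ there corresponding to $p_1,p_2,p_3,p_4$, $x_i$ to $q_3,-q_2,q_1,r$, and $g_i$ to $F_1,\dots,F_4$. Hence the projection of $X$ from the $u$-point is, by construction, the inverse of a Type I Tom unprojection in the sense of \cite{WithoutC}, and it remains only to check, case by case, that the $u$-point lies on $X$ as one of the basket quotient singularities and that its image $Y$ --- cut by the five Pfaffians of (\ref{eq:Pf}) in the weighted projective space obtained by deleting the $u$-coordinate --- is a quasi-smooth codimension-three $\mQ$-Fano threefold with $\rm{ac}_Y=3$, which is precisely the Type I condition of Definition \ref{defn:typeI}. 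For No.24078 the same argument applies with $\Sigma^{13}_{\mP}$ in place of $\Sigma^{12}_{\mP}$. No.1218 is exceptional: it has two $1/5\,(1,2,3)$-points interchanged by a symmetry of the equations, and the analysis at each yields a Type I Tom projection. No.360 is handled separately, by projecting $\mP_X$ from its $p_1$-point (the $1/7\,(1,2,5)$-point) and checking by elimination that the image is the stated $(14,16)\subset\mP(1,4,5,6,7,8)$. For part $(2)$, a general $T\in|-K_X|$ is quasi-smooth by Bertini on the quasi-smooth locus of $X$ and satisfies $K_T=\sO_T$ by adjunction, hence is a $K3$ surface with only canonical, i.e.\ Du Val, singularities; near each $1/\alpha\,(\beta,-\beta,1)$-point of $X$ a general anti-canonical member is locally a general hypersurface through the singularity, which cuts out exactly the Du Val $A_{\alpha-1}=1/\alpha\,(\beta,-\beta)$ singularity and nothing more, and $T$ is smooth elsewhere.

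The main obstacle is the middle step: the case-by-case verification of quasi-smoothness and the matching of the baskets. This requires precise control of the ideal of $\widehat\Sigma$ along each coordinate stratum of $\mP_X$ --- which of the intrinsic coordinates $\bm{p},p_4,\bm{q},r,u,S,\bm{t}$ vanish there, and which of the nine generators restrict to a nonzero linear term --- together with careful bookkeeping of how the general cutting forms interact with the (in general non-isolated) singular locus of the key variety. Once this is in hand, the adjunction, the Hilbert series computation, and the projection arguments are essentially formal.
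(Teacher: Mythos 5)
Your overall strategy — reduce quasi-smoothness to a Bertini argument plus a stratum-by-stratum linear-part check at coordinate points, read the Hilbert numerator and $K_X$ off the graded free resolution of the key variety, and realize the Type I Tom projection as the elimination of $u$ using the Tom format of (\ref{eq:Pf}) — is essentially the route the paper takes (its Sections \ref{sec:weight}--\ref{sec:proj}, with your ``first-order inspection'' being the paper's LPC). However, there is one genuine gap: the primality of $X$, i.e.\ Picard number one. You propose to transfer the statement ``$\mathrm{Cl}$ is generated by $\sO(1)$'' from $\Sigma$ to $X$ by ``a Lefschetz-type theorem for the class group,'' but $X$ has codimension $9$ or $10$ in a $12$- or $13$-dimensional key variety with non-isolated singularities, and no off-the-shelf Grothendieck--Lefschetz theorem for divisor class groups covers cutting down by nine general ample hypersurfaces through singular intermediate varieties. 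The paper's actual argument is different in substance: it proves that the affine coordinate ring $R_\Sigma$ is a UFD via Nagata's theorem (using that $(R_\Sigma)_{p_1}$ is a localization of a polynomial ring and that $p_1$ is a prime element, the latter requiring normality and connectedness of $\Sigma^{14}_{\mA}\cap\{p_1=0\}$), and then repeats the Nagata argument for $R_X$ — which forces the additional, case-by-case verification that $X\cap\{p_1=0\}$ is a prime divisor (Checkpoint (C) of the paper's strategy, using \cite[Cor.~3.10 and Thm.~3.13]{sga2} for the factoriality of $(R_{X,o})_{p_1}$). Your proposal omits this hypothesis and the work needed to verify it, and without it the conclusion $\rho(X)=1$ (hence also $\rm{ac}_X=4$, which relies on $X$ being prime) is not established.

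Two smaller points. For No.360 you describe the projection as ``projecting $\mP_X$ from its $p_1$-point,'' but the image being a codimension-two complete intersection $(14,16)\subset\mP(1,4,5,6,7,8)$ requires eliminating \emph{two} coordinates ($p_1$ and $p_3$); the paper realizes this as the projection $\Sigma^{13}_{\mA}\to\mA(p_2,p_4,\bm{q},r,u,S,\bm{t})$ onto $\Xi=\{up_2=F_2,\,up_4=F_4\}$, which is not a Type I projection at all. For No.1218 the two $1/5(1,2,3)$-points are not simply ``interchanged by a symmetry'': the paper constructs an explicit graded automorphism $\bm{p}\mapsto\bm{p}+\alpha A\bm{t}$, $p_4\mapsto p_4-2\alpha r-\alpha^2 u$, $r\mapsto r+\alpha u$ of $\Sigma^{14}_{\mA}$ (graded precisely because $w(p_4)=w(r)=w(u)$ here) that moves each of them to the $u$-point, thereby reducing to the already-treated case. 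Both of these can be repaired, but as written your argument for (1-3) and (1-4) is not yet a proof.
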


\begin{rem}
\begin{enumerate}
\item
In \cite{grdb}, the candidates of the projection and the unprojection for each $\mQ$-Fano threefold are also given. It is an interesting problem to check that each $X$ as constructed in Theorem \ref{thm:main} (1) has all of them.
\item
Possibly a general $T$ as in Theorem \ref{thm:main} (2) has Picard number one. To settle this problem, we need more careful analysis of $T$.
\end{enumerate}
\end{rem}

In the forthcoming paper \cite{PartII}, we construct a 15-dimensional affine variety $\Pi_{\mathbb{A}}^{15}$ related with $\mathbb{P}^{2}\times\mathbb{P}^{2}$-fibration and produce prime $\mathbb{Q}$-Fano threefolds of anti-canonical codimension four belonging to the eight classes No.308, 501, 512, 550, 577, 872, 878, and 1766 of [GRDB] from weighted projectivizations of $\Pi_{\mathbb{A}}^{15}$. The construction of $\Pi_{\mathbb{A}}^{15}$ is based on a certain type of unprojection and is inspired by R.Taylor's thesis \cite[\S 5.2]{Tay} submitted to University of Warwick.  In that paper, to study $\Pi^{15}_{\mA}$, we also introduce another $13$-dimensional affine variety $H_{\mathbb{A}}^{13}$ related with $\mathbb{P}^{2}\times\mathbb{P}^{2}$-fibration. In a further work, we show that $H_{\mathbb{A}}^{13}$ contains the cluster variety of type $C_2$ as a closed subvariety, and show that several weighted projectivizations of closed subvarieties of $H_{\mathbb{A}}^{13}$ produce more general prime $\mQ$-Fano threefolds than those obtained from the cluster variety of type $C_2$. Moreover, a weighted projectivizations of $H^{13}_{\mA}$ itself produces also a prime $\mQ$-Fano threefold of No.20652, which was not obtained from the cluster variety of type $C_2$ (note that it was constructed in \cite{bkr} since it has Type I projections) .

We also plan to construct prime $\mQ$-Fano threefolds $X$ with $\rm{ac}_X=4$ using key varieties related with $\mP^1\times \mP^1\times\mP^1$-fibration.

\subsection{Structure of the paper}
The aim of this paper is twofold; the constructions of $\mQ$-Fano threefolds as in Theorem \ref{thm:main}, and the descriptions of the key varieties $\Sigma^{13}_{\mA}$ and $\Sigma^{14}_{\mA}$. The sections \ref{14}--\ref{sec:proj} and  the sections \ref{sec:Up}--\ref{More14} are devoted to the first and  the second aim respectively. 

In the sections \ref{14} and \ref{13}, we give descriptions of $\Sigma^{14}_{\mA}$ and $\Sigma^{13}_{\mA}$ respectively which are needed for the constructions of $\mQ$-Fano threefolds as in Theorem \ref{thm:main}.
In the subsection \ref{Charts}, we describe several open subset of $\Sigma^{14}_{\mA}$ called charts and determine the singular locus of $\Sigma^{14}_{\mA}$. The results play important roles to determine singularities of prime $\mQ$-Fano threefolds and $\Sigma^{14}_{\mA}$  in the section \ref{sec:Example} and the subsection \ref{sub:More} respectively. In the subsection \ref{sub:Tom14}, using unprojection technique, we show that
$\Sigma^{14}_{\mA}$ is irreducible, reduced, Gorenstein and of codimension four in the ambient affine space, and the affine coordinate ring of $\Sigma^{14}_{\mA}$ is a UFD (Proposition \ref{prop:Tom14}). In this subsection, we also show by Proposition \ref{prop:Tom14} that the weighted projectivization $\Sigma^{13}_{\mP}$ of $\Sigma^{14}_{\mA}$ with weights of coordinates is $\mQ$-factorial and has Picard number one (Corollary \ref{SigmaDiv}) and give a sufficient condition for $\mQ$-Fano threefolds we will construct from $\Sigma^{14}_{\mA}$ to be $\mQ$-factorial and to have Picard number one (Corollary \ref{QFanoDiv}). In the section \ref{13}, we obtain similar results for $\Sigma^{13}_{\mA}$ and for $\mQ$-Fano threefolds we will construct from $\Sigma^{13}_{\mA}$ (Proposition \ref{SigmaQFanoDiv13}), which easily follows since 
the open subset $\Sigma^{14}_{\mA}\cap \{s_{33}\not =0\}$ of $\Sigma^{14}_{\mA}$ is isomorphic to $\Sigma^{13}_{\mA}\times \mA^{1*}$
(Lemma \ref{lem:1314}). In the section \ref{sec:weight}, we give a method to search weights of coordinates of $\Sigma^{13}_{\mA}$ and $\Sigma^{14}_{\mA}$ for which the corresponding weighted projectivizations of them will produce $\mQ$-Fano threefolds as in Theorem \ref{thm:main} (1) (the subsection \ref{sub:posweight}). In the subsection \ref{sub:916}, we obtain a so-called $9\times 16$ free resolutions of the affine coordinate ring of $\Sigma^{13}_{\mA}$ and $\Sigma^{14}_{\mA}$ (Proposition \ref{prop:916}). We can read off the Hilbert series of the affine coordinate rings of $\Sigma^{13}_{\mA}$ and $\Sigma^{14}_{\mA}$ from these free resolutions. From these data, we prove a part of Theorem \ref{thm:main} (1) (Corollaries \ref{cor:candivdeg} and \ref{cor:ac4}). In the sections \ref{sec:Example} and \ref{sec:proj}, we complete the proof of Theorem \ref{thm:main}. In the section \ref{sec:Example}, we check that the candidates of $\mQ$-Fano threefolds as in Theorem \ref{thm:main} (1) which are weighted complete intersections of 
the weighted projectivizations $\Sigma^{12}_{\mP}$ and $\Sigma^{13}_{\mP}$ of $\Sigma^{13}_{\mA}$ and $\Sigma^{14}_{\mA}$ with suitable weights of coordinates have prescribed singularities and Picard number one. We also prove the assertion (2) of Theorem \ref{thm:main} as for singularities of anti-canonical members of $\mQ$-Fano threefolds. In the subsection \ref{sub:strategy}, we explain in detail our strategy to prove these. The proof is undertaken in the subsections \ref{Case1}--\ref{Case3} though repeated arguments are often omitted and only the results are written down. Finally, in the section \ref{sec:proj}, we describe some projections of $\mQ$-Fano threefolds as in Theorem \ref{thm:main} (1-2)--(1-4), which complete our proof of Theorem \ref{thm:main}.

In the section \ref{sec:Up}, we introduce another $14$-dimensional affine variety $\Upsilon^{14}_{\mA}$ pushing forward the consideration in the subsection \ref{sub:TypeII1}. The affine variety $\Upsilon^{14}_{\mA}$ was constructed independently in R.~Taylor's thesis \cite{Tay} and was studied more in detail. We refer to the beginning of the section \ref{sec:Up} for detailed background. In the section \ref{More14}, we prove the existence of a $\GL_3$- and a $(\mC^*)^6$-actions on $\Sigma^{14}_{\mA}$ (the subsection \ref{sub:sl3}), the existence of a $\mP^2\times \mP^2$-fibration for a partial projectivization of $\Sigma^{14}_{\mA}$ (the subsection \ref{fib13}), and give descriptions of singularities of $\Sigma^{13}_{\mA}$ and $\Sigma^{14}_{\mA}$ (the subsection \ref{sub:More}). In the subsection \ref{sub:dual}, we amplify the properties of the above mentioned $\mP^2\times \mP^2$-fibration by explaining its relation with the classical duality of conics.

\begin{ackn}
The author owes many important calculations in the paper to Professor Shinobu Hosono. The author wishes to thank him for his generous cooperations. The author is grateful to Professor Stavros Papadakis for answering questions about Type $\rm{II}_1$ projections. The author wishes to thank Professor Gavin Brown for clarifying my understanding of \cite{grdb}. The author is very grateful to Professor Shigeru Mukai for telling his inference about the dimension of key varieties, which helped the author to find out the key varieties in this paper.  This work is supported in part by Grant-in Aid for Scientific Research (C) 16K05090.
\end{ackn}

\section{\bf Affine variety $\Sigma_{\mA}^{14}$}~
\label{14}
\subsection{Charts and singular locus}
\label{Charts}
For a coordinate $*$, we call the open subset of $\Sigma^{14}_{\mA}$ with 
$*\not =0$ {\it the $*$-chart}.
We describe the $*$-chart such that $*$ is one of the entries of $\bm{p}$, $\bm{q}$, $p_4$, $r$, $u$.

\vspace{3pt}

\noindent \underline{\bf $p_1$-chart}: 
We note that $\Sigma^{14}_{\mA}\cap \{p_1\not=0\}$ is isomorphic to $(\Sigma^{14}_{\mA}\cap \{p_1=1\})\times \mA^{1*}$
by the map $(\bm{p},p_4, \bm{q},r,u,S,\bm{t})\mapsto  \left((p_1^{-1}\bm{p},p_1^{-1}p_4, p_1^{-1}\bm{q},p_1^{-1}r,p_1^{-1}u,S,\bm{t}), p_1\right)$. This is because all the equations of $\Sigma^{14}_{\mA}$ are quadratic when we consider the entries of $S$ and $\bm{t}$ are constants.
Therefore it suffices to describe $\Sigma^{14}_{\mA}\cap \{p_1=1\}$.
Solving regularly the equations of $\Sigma^{14}_{\mA}$ (\ref{Key1}) and (\ref{Key2}) and setting $p_1=1$, we have the following:
\begin{align*}
&q_1=-p_2 q_2 - p_3 q_3, \\
&r=q_3 s_{12} - q_2 s_{13} + p_2 q_3 s_{22} - p_2 q_2 s_{23} + p_3 q_3 s_{23} - p_3 q_2 s_{33} + p_4 q_3 t_2 - p_4 q_2 t_3, \\
&u=q_3^2 s_{22} t_1 - 2 q_2 q_3 s_{23} t_1 + q_2^2 s_{33} t_1 - 2 q_3^2 s_{12} t_2 + 2 q_2 q_3 s_{13} t_2 - p_2 q_3^2 s_{22} t_2 -\\ 
&\qquad 2 p_3 q_3^2 s_{23} t_2 + p_2 q_2^2 s_{33} t_2 + 2 p_3 q_2 q_3 s_{33} t_2 - p_4 q_3^2 t_2^2 + 2 q_2 q_3 s_{12} t_3 - 2 q_2^2 s_{13} t_3 +\\
&\qquad  2 p_2 q_2 q_3 s_{22} t_3 + p_3 q_3^2 s_{22} t_3 - 2 p_2 q_2^2 s_{23} t_3 - p_3 q_2^2 s_{33} t_3 + 
    2 p_4 q_2 q_3 t_2 t_3 - p_4 q_2^2 t_3^2,\\
&s_{11}=-2 p_2 s_{12} - 2 p_3 s_{13} - p_2^2 s_{22} - 2 p_2 p_3 s_{23} - p_3^2 s_{33} - p_4 t_1 - p_2 p_4 t_2 - p_3 p_4 t_3.
\end{align*}
This implies that $\Sigma^{14}_{\mA}$ on this chart is isomorphic to $\mA^{13}\times \mA^{1*}$ with the coordinates of the original $\mA^{18}$ except $q_1,r,u,s_{11}$,
which are written as above by the other coordinates.

\vspace{3pt}

\noindent \underline{\bf $p_2$-chart}: 
Similarly to the $p_1$-chart, we have only to describe $\Sigma^{14}_{\mA}\cap \{p_2=1\}$.
Solving regularly the equations of $\Sigma^{14}_{\mA}$ (\ref{Key1}) and (\ref{Key2}) and setting $p_2=1$, we have the following:
\begin{align*}
&q_2= -p_1 q_1 - p_3 q_3,\\ 
&r= p_3 q_1s_{33} - p_1 q_3 s_{11} - q_3 s_{12} + p_1 q_1 s_{13} - p_3 q_3 s_{13} + q_1 s_{23} - p_4 q_3 t_1 + p_4 q_1 t_3,\\ 
&u= p_1 q_1^2s_{33} t_1 + 2 p_3 q_1 q_3 s_{33}t_1 - p_1 q_3^2 s_{11} t_1 - 2 q_3^2 s_{12} t_1 - 2 p_3 q_3^2 s_{13} t_1 + 2 q_1 q_3 s_{23} t_1- \\
&\qquad  p_4 q_3^2 t_1^2 + q_1^2s_{33} t_2 + q_3^2 s_{11} t_2 - 2 q_1 q_3 s_{13} t_2 - p_3 q_1^2s_{33} t_3 + 2 p_1 q_1 q_3 s_{11} t_3 + \\
&\qquad   p_3 q_3^2 s_{11} t_3 + 2 q_1 q_3 s_{12} t_3 - 2 p_1 q_1^2 s_{13} t_3 -   2 q_1^2 s_{23} t_3 + 2 p_4 q_1 q_3 t_1 t_3 - p_4 q_1^2 t_3^2, \\
& s_{22}= -p_3^2 s_{33}- p_1^2 s_{11} - 2 p_1 s_{12} - 2 p_1 p_3 s_{13} - 2 p_3 s_{23} - p_1 p_4 t_1 - p_4 t_2 - p_3 p_4 t_3.
\end{align*}
This implies that $\Sigma^{14}_{\mA}$ on this chart is isomorphic to $\mA^{13}\times \mA^{1*}$ with the coordinates of the original $\mA^{18}$ except $q_2,r,u,s_{22}$,
which are written as above by the other coordinates.

\vspace{3pt}

The description of the $p_3$-chart is similar to the above two charts, so we only mention that $\Sigma^{14}_{\mA}$ on this chart is isomorphic to $\mA^{13}\times \mA^{1*}$ with the coordinates of the original $\mA^{18}$ except $q_3,r,u,s_{33}$ (the description of the $p_2$-chart is also similar to that of the $p_1$-chart, but we need full descriptions of both of them as above in the section \ref{sec:Example}).

\noindent \underline{\bf $u$-chart}:
Similarly to the $p_1$-chart, we have only to describe $\Sigma^{14}_{\mA}\cap \{u=1\}$.
Solving regularly the equations of $\Sigma^{14}_{\mA}$ (\ref{Key1}) and (\ref{Key2}) and setting $u=1$, we have the following:
\begin{align*}
&p_1= q_3^2 s_{22} t_1 - 2 q_2 q_3 s_{23} t_1 + q_2^2 s_{33} t_1 - q_3 r t_2 - 
    q_3^2 s_{12} t_2 + q_2 q_3 s_{13} t_2 + q_1 q_3 s_{23} t_2 - \\ 
&\ \qquad q_1 q_2 s_{33} t_2 +  q_2 r t_3 + q_2 q_3 s_{12} t_3 - q_2^2 s_{13} t_3 - q_1 q_3 s_{22} t_3 + 
    q_1 q_2 s_{23} t_3,\\ 
& p_2 = q_3 r t_1 - q_3^2 s_{12} t_1 + q_2 q_3 s_{13} t_1 + q_1 q_3 s_{23} t_1 - 
    q_1 q_2 s_{33} t_1 + q_3^2 s_{11} t_2 - 2 q_1 q_3 s_{13} t_2 +\\ 
& \ \qquad q_1^2 s_{33} t_2 - 
  q_1 r t_3 - q_2 q_3 s_{11} t_3 + q_1 q_3 s_{12} t_3 + q_1 q_2 s_{13} t_3 - 
    q_1^2 s_{23} t_3,\\ 
&  p_3= -q_2 r t_1 + q_2 q_3 s_{12} t_1 - q_2^2 s_{13} t_1 - q_1 q_3 s_{22} t_1 + 
    q_1 q_2 s_{23} t_1 + q_1 r t_2 - q_2 q_3 s_{11} t_2 +\\ 
&\ \qquad q_1 q_3 s_{12} t_2 + 
    q_1 q_2 s_{13} t_2 - q_1^2 s_{23} t_2 + q_2^2 s_{11} t_3 - 2 q_1 q_2 s_{12} t_3 + 
    q_1^2 s_{22} t_3,\\ 
&  p_4= -r^2 + q_3^2 s_{12}^2 - 2 q_2 q_3 s_{12} s_{13} + q_2^2 s_{13}^2 - 
    q_3^2 s_{11} s_{22} + 2 q_1 q_3 s_{13} s_{22} + 2 q_2 q_3 s_{11} s_{23} - \\
& \qquad   2 q_1 q_3 s_{12} s_{23} - 2 q_1 q_2 s_{13} s_{23} + q_1^2 s_{23}^2 - q_2^2 s_{11} s_{33} + 
    2 q_1 q_2 s_{12} s_{33} - q_1^2 s_{22} s_{33}.
\end{align*}
This implies that $\Sigma^{14}_{\mA}$ on this chart is isomorphic to $\mA^{13}\times \mA^{1*}$ with the coordinates of the original $\mA^{18}$ except $p_1,p_2,p_3,p_4$,
which are written as above by the other coordinates.

\vspace{3pt}

\noindent \underline{\bf $p_4$-chart}:
Similarly to the $p_1$-chart, it suffices to describe $\Sigma^{14}_{\mA}\cap \{p_4=1\}$. When $p_4=1$, we may  eliminate the coordinate $u$ by the equation $u p_4=F_4$ as in (\ref{Key2}). Then we see that $\Sigma^{14}_{\mA}\cap \{p_4=1\}$  is defined by
the five $4\times 4$ Pfaffians of the skew-symmetric matrix
(\ref{eq:Pf}) with $p_4=1$.
By this description, it holds that
\begin{equation}
\label{eq:S4}
{\sf S}_4:=\Sing \Sigma^{14}_{\mA}\cap \{p_4\not =0\}=
\{p_4\not =0, \bm{p}=\bm{q}=\bm{t}=\bm{0},\,r=u=0\},
\end{equation}
which is $7$-dimensional.

\vspace{3pt}

\noindent \underline{\bf $q_1$-chart}:
Similarly to the $p_1$-chart, it suffices to describe $\Sigma^{14}_{\mA}\cap \{q_1=1\}$. When $q_1=1$, we may eliminate the coordinate $p_1$ by the equation ${\empty^t\bm{p}}\bm{q}=0$ as in (\ref{Key1}).
Then we may verify that $\Sigma^{14}_{\mA}\cap \{q_1=1\}$ is defined by
the five $4\times 4$ Pfaffians of 
the following skew-symmetric matrix:
{{\begin{equation}
\begin{spmatrix}\label{q1Pf}
0 & -u &  r-(s_{23}- q_2 s_{13}-q_3 s_{12}+q_2 q_3 s_{11}) & -s_{33}+2q_3 s_{13}-q_3^2 s_{11} & -t_3+q_3 t_1\\
& 0  & s_{22}-2q_2 s_{12}+q_2^2s_{11} & r+(s_{23}-q_2 s_{13}-q_3 s_{12}+q_2 q_3 s_{11}) & t_2-q_2 t_1\\
& & 0 &  p_4 & -p_3\\
& & & 0 & p_2\\
& & & & 0
\end{spmatrix}
\end{equation}
}}
By this description, it holds that
\begin{align*}
{\sf S}_1&:=\Sing \Sigma^{14}_{\mA}\cap \{q_1\not =0\}=\\
&\{q_1\not =0,\quad \bm{p}=\bm{0},\quad p_4=r=u=0,\\
&q_1^2 s_{23}=q_1 q_2 s_{13}+q_1 q_3 s_{12}-q_2 q_3 s_{11},\\
&q_1^2s_{33}=2q_1q_3 s_{13}-q_3^2 s_{11},\\ 
&q_1^2s_{22}=2q_1q_2 s_{12}-q_2^2s_{11},\\
&q_1t_2=q_2 t_1,\quad q_1t_3=q_3 t_1\},
\end{align*}
which is $7$-dimensional.

\vspace{3pt}

For the $q_2$- and $q_3$-charts, detailed descriptions are similar to that of the $q_1$-chart. We only write down their singular loci below.

\noindent \underline{\bf $q_2$-chart}:
Similarly to the case of the $q_1$-chart, it holds that
\begin{align*}
{\sf S}_2&:=\Sing \Sigma^{14}_{\mA}\cap \{q_2\not =0\}=\\
&\{q_2\not =0,\quad \bm{p}=\bm{0},\quad p_4=r=u=0,\\
&q_2^2 s_{13}=q_1 q_2 s_{23}-q_1 q_3 s_{22}+q_2q_3 s_{12},\\
&q_2^2 s_{33}=2q_2 q_3 s_{23}-q_3^2 s_{22},\\ 
&q_2^2 s_{11}=2q_1q_2 s_{12}-q_1^2s_{22},\\
&q_2 t_1=q_1 t_2,\quad q_2 t_3=q_3 t_2\},
\end{align*}
which is $7$-dimensional.

\noindent \underline{\bf $q_3$-chart}:
Similarly to the case of the $q_1$-chart, it holds that
\begin{align*}
{\sf S}_3&:=\Sing \Sigma^{14}_{\mA}\cap \{q_3\not =0\}=\\
&\{q_3\not =0,\quad \bm{p}=\bm{0},\quad p_4=r=u=0,\\
&q_3^2 s_{12}=q_3 q_2 s_{13}+q_3 q_1 s_{23}-q_1 q_2 s_{33},\\
&q_3^2 s_{11}=2q_1 q_3 s_{13}-q_1^2 s_{33},\\
&q_3^2 s_{22}=2q_2 q_3 s_{23}-q_2^2 s_{33},\\ 
&q_3 t_1=q_1t_3,\quad q_3 t_2=q_2t_3\},
\end{align*}
which is $7$-dimensional.

\vspace{3pt}

Since ${\sf S}_i$ $(i=1,2,3)$ are isomorphic to $\mA^6\times \mA^{1*}$ and we may check that ${\sf S}_i\cap {\sf S}_j$ ($1\leq i<j\leq 3$) is a nonempty open subset both in ${\sf S}_i$ and ${\sf S}_j$, we see that there exists a $7$-dimensional irreducible closed subset $\overline{\sf S}$ of $\SigmaA$ such that $\overline{\sf S}\cap \{q_i\not =0\}={\sf S}_i$ ($i=1,2,3$).

\vspace{3pt}

\noindent \underline{$r$-chart}:
We see that this chart is covered by the other charts. Indeed,
by the fourth equation $up_4=-(r^2+{\empty^t \bm{q}}S^{\dagger} \bm{q})$ of (\ref{Key2}),
we have $u\not =0$ and $p_4\not =0$, or at least one $q_i\not =0$ on this chart.

\vspace{3pt}

As a consequence of the above descriptions of charts, we have the following:

\begin{prop}
\label{prop:SinglociSigma}
The singular locus of $\Sigma^{14}_{\mA}$ is equal to $\mA(p_4,S)\cup \mA(S,\bm{t})\cup \overline{\sf S}$,
where $\dim \mA(p_4,S)=\dim \overline{\sf S}=7$ and $\dim \mA(S,\bm{t})=9$.
\end{prop}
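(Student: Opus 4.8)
The plan is to assemble the global description of $\Sing \Sigma^{14}_{\mA}$ from the chart-by-chart computations already carried out, using the fact that the $p_i$-, $p_4$-, $q_i$-, $r$-, and $u$-charts together cover all of $\Sigma^{14}_{\mA}$ except the locus where every one of those eight coordinates vanishes. First I would record that on the $p_i$-charts ($i=1,2,3$) and the $u$-chart the scheme is smooth, since each was shown to be isomorphic to $\mA^{13}\times\mA^{1*}$; hence no singular point of $\Sigma^{14}_{\mA}$ lies in $\{p_1\neq 0\}\cup\{p_2\neq 0\}\cup\{p_3\neq 0\}\cup\{u\neq 0\}$. Next I would invoke the $r$-chart remark: on $\{r\neq 0\}$ one has either $u\neq 0$ and $p_4\neq 0$, or some $q_i\neq 0$, so $\{r\neq 0\}$ adds nothing new. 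This reduces the determination of $\Sing\Sigma^{14}_{\mA}$ to (a) the part inside $\{p_4\neq 0\}$, given by ${\sf S}_4$ in \eqref{eq:S4}; (b) the part inside $\{q_1\neq 0\}\cup\{q_2\neq 0\}\cup\{q_3\neq 0\}$, given by $\overline{\sf S}$ (the $7$-dimensional irreducible set glued from ${\sf S}_1,{\sf S}_2,{\sf S}_3$); and (c) the closed ``boundary'' locus $Z:=\{\bm{p}=\bm{0},\,p_4=0,\,\bm{q}=\bm{0},\,r=0,\,u=0\}$ not met by any of these charts, on which $\Sigma^{14}_{\mA}$ is entirely contained in its singular locus for dimension reasons (it will turn out $Z=\mA(S,\bm{t})$, which is $9$-dimensional, hence of codimension $>4$ in $\mA^{18}$ while $\Sigma^{14}_{\mA}$ has codimension $4$, so $Z\subset\Sing\Sigma^{14}_{\mA}$).

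The heart of the argument is then to reconcile these three pieces into the stated union $\mA(p_4,S)\cup\mA(S,\bm{t})\cup\overline{\sf S}$. I would argue that ${\sf S}_4$ is exactly the open part $\mA(p_4,S)\cap\{p_4\neq 0\}$ of the coordinate subspace $\mA(p_4,S):=\{\bm{p}=\bm{q}=\bm{t}=\bm0,\ r=u=0\}$ (spanned by $p_4$ and the entries of $S$, hence $7$-dimensional); the complementary part $\mA(p_4,S)\cap\{p_4=0\}$ sits inside the boundary locus $Z$. Likewise I would check that $\overline{\sf S}$, being irreducible of dimension $7$ with $\overline{\sf S}\cap\{q_i\neq 0\}={\sf S}_i$, has its remaining points (where all $q_i=0$) again inside $Z$: indeed on ${\sf S}_i$ the relations force, in the limit $\bm q\to\bm 0$, that $\bm p=\bm 0$, $p_4=r=u=0$ and $S$ satisfies rank-type conditions with $\bm t$ proportional to $\bm 0$, landing in $\{\bm p=\bm q=\bm 0,\ p_4=r=u=0\}$. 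Finally I must identify the full boundary contribution: on $Z$ the only surviving coordinates are the entries of $S$ and of $\bm t$, and I claim $\Sigma^{14}_{\mA}\cap Z=\mA(S,\bm t):=\{\bm p=\bm q=\bm0,\ p_4=r=u=0\}$, all nine defining equations \eqref{Key1}, \eqref{Key2} vanishing identically there; a dimension count ($\dim\mA(S,\bm t)=9>18-4$) shows every point of it is singular. Putting (a), (b), (c) together with the smoothness of the $p_i$- and $u$-charts gives the reverse inclusion $\Sing\Sigma^{14}_{\mA}\subseteq\mA(p_4,S)\cup\mA(S,\bm t)\cup\overline{\sf S}$, while the forward inclusion follows because each of the three pieces was shown (on the relevant chart, or by the codimension argument on $\mA(S,\bm t)$) to consist of singular points; one must also note $\mA(p_4,S)\cap\{p_4=0\}\subset\mA(S,\bm t)$ and similarly for $\overline{\sf S}$, so the three listed sets really do exhaust $\Sing\Sigma^{14}_{\mA}$ as written.

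The step I expect to be the main obstacle is verifying that the closed boundary locus $Z$ contributes exactly $\mA(S,\bm t)$ and nothing more — i.e.\ that $\Sigma^{14}_{\mA}$ does not acquire some extra component or extra singular stratum along $\{\bm p=\bm q=\bm0,\ p_4=r=u=0\}$ beyond the coordinate subspace $\mA(S,\bm t)$ itself. Since none of the convenient charts reaches this locus, I cannot read it off from an explicit isomorphism; instead I would argue directly from the nine equations \eqref{Key1}--\eqref{Key2}, checking that each becomes $0=0$ when $\bm p,\bm q,p_4,r,u$ all vanish (the left-hand sides are all polynomials with every monomial divisible by one of $p_1,p_2,p_3,p_4,q_1,q_2,q_3,r,u$), so that the whole $9$-dimensional subspace $\mA(S,\bm t)$ lies on $\Sigma^{14}_{\mA}$; then the dimension inequality forces it into the singular locus, and irreducibility of $\Sigma^{14}_{\mA}$ (Proposition \ref{prop:Tom14}) together with $\dim\Sigma^{14}_{\mA}=14$ guarantees there is no separate component hiding there. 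A secondary but routine point is the bookkeeping that $\mA(p_4,S)$ and $\overline{\sf S}$ meet $\mA(S,\bm t)$ along their ``$p_4=0$'' resp.\ ``$\bm q=\bm0$'' boundaries, so the stated union is literally the Zariski closure of ${\sf S}_1\cup{\sf S}_2\cup{\sf S}_3\cup{\sf S}_4$ together with $\mA(S,\bm t)$, and a final dimension tally confirms $\dim\mA(p_4,S)=\dim\overline{\sf S}=7$ and $\dim\mA(S,\bm t)=9$, as claimed.
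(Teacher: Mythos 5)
Your chart-by-chart decomposition is exactly the paper's: smoothness on the $p_i$- and $u$-charts, the explicit singular loci ${\sf S}_1,\dots,{\sf S}_4$ on the $q_i$- and $p_4$-charts, the observation that the $r$-chart is covered by the others, and the reduction of everything else to the locus $\{\bm{p}=\bm{q}=\bm{0},\,p_4=r=u=0\}=\mA(S,\bm{t})$. The bookkeeping that $\mA(p_4,S)$ is the closure of ${\sf S}_4$ and that the boundaries $\{p_4=0\}$ resp.\ $\{\bm q=\bm 0\}$ of $\mA(p_4,S)$ and $\overline{\sf S}$ land inside $\mA(S,\bm t)$ is also fine (and, since $\Sing$ is closed, largely automatic once the open strata are handled).

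The genuine gap is your justification that $\mA(S,\bm{t})\subset\Sing\Sigma^{14}_{\mA}$. The inequality you invoke, ``$\dim\mA(S,\bm t)=9>18-4$,'' is false ($9<14$), and the underlying inference --- that a locus of codimension larger than $\codim\Sigma^{14}_{\mA}$ which is contained in $\Sigma^{14}_{\mA}$ must lie in its singular locus --- is a non-sequitur: a smooth $14$-fold can perfectly well contain a $9$-dimensional linear subspace (e.g.\ $\mA^9\subset\mA^{14}\subset\mA^{18}$). Likewise, your observation that every monomial of the nine equations is divisible by one of $p_1,\dots,p_4,q_1,q_2,q_3,r,u$ only shows these equations vanish to order $\geq 1$ along $\mA(S,\bm t)$, i.e.\ that $\mA(S,\bm t)\subset\Sigma^{14}_{\mA}$; it says nothing about the Jacobian. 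The correct (and very short) argument, which is the one the paper uses, is that every equation in (\ref{Key1}) and (\ref{Key2}) is \emph{quadratic} --- has no linear or constant part --- in the variables $\bm{p},p_4,\bm{q},r,u$ when the entries of $S$ and $\bm{t}$ are treated as coefficients. Hence every first partial derivative (with respect to all $18$ coordinates, including $s_{ij}$ and $t_k$) still has every term of degree $\geq 1$ in $\bm{p},p_4,\bm{q},r,u$ and so vanishes identically on $\mA(S,\bm{t})$; the Jacobian has rank $0<4=\codim\Sigma^{14}_{\mA}$ there, and since $\Sigma^{14}_{\mA}$ is reduced this puts all of $\mA(S,\bm{t})$ in the singular locus. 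With that substitution your proof closes up.
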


\begin{proof}
Note that $\mA(p_4,S)$ is the closure of ${\sf S}_4$. Therefore, by the above descriptions of charts, we have only to show that $\mA(S,\bm{t})\subset \Sing \Sigma^{14}_{\mA}$. This certainly holds since every equation in (\ref{Key1}) and (\ref{Key2}) is quadratic if we consider the entires of $S$ and $\bm{t}$ are constants.
\end{proof}

\subsection{Unprojection from the Tom format}
\label{sub:Tom14}

\begin{defn}[{\cite[\S 5.5, 5.7]{abr}}]
\label{defn:typeI}
For $a_1,\dots,a_N\in \mN$ such that ${\rm{gcd}}(a_1,\dots,a_N)=1$, we consider the weighted projective space $\mP(a_1,\dots,a_N)$ and we denote by $x_i$ the homogeneous coordinate with weight $a_i$ ($1\leq i\leq N$). Let $V$ be an $n$-dimensional projective variety containing the $x_k$-point for some $k$. We set ${\sf p}:=$ the $x_k$-point. 
\begin{enumerate}[(1)]
\item Assume that there exist positive integers $b_1,\dots,b_n$ with ${\rm{gcd}}(b_1,\dots,b_n)=1$ and ${\rm{gcd}}(b_i,a_k)=1$ $(1\leq i\leq n)$ and global coordinates $x_{i_j}$  ($1\leq j\leq n$) with weight $b_j(=a_{i_j})$ such that $V$ has a $1/a_k (b_1,\dots, b_n)$-singularity at ${\sf p}$ and $x_{i_j}$  ($1\leq j\leq n$) provide the local orbifold coordinates of $V$ at ${\sf p}$. In this situation, we call ${\sf p}$ {\em a Type I center}, and  the restriction 
$V\dashrightarrow \overline{V}$ of the projection $\mP(a_1,\dots,a_N)\dashrightarrow\mP(a_1,\dots, \check{a}_k,\dots,a_N)$ from ${\sf p}$ with the image $\overline{V}$ {\em a Type I projection from ${\sf p}$}.  

It turns out that $\overline{V}$ contains the $\mP(b_1,\dots,b_n)$ which is the common zero of $N-n-1$ coordinates of $\mP(a_1,\dots, \check{a}_k,\dots,a_N)$ and which is the image of the exceptional divisor of the weighted blow-up of $V$ at ${\sf p}$ with weight $1/a_k\, (b_1,\dots, b_n)$ for the above local orbifold coordinates. We set $\Pi:=\mP(b_1,\dots,b_n)$.
\item
In the situation of (1), we also assume that $\overline{V}$ is of codimension three in $\mP(a_1,\dots, \check{a}_k,\dots,a_N)$, and that,
for simplicity of notation, $\Pi$ is equal to $\{y_1=y_2=y_3=y_4=0\}$ and $k\geq 5$, where we denote by $y_l$ the coordinates of  
$\mP(a_1,\dots, \check{a}_k,\dots,a_N)$ with weight $a_l$. Assume moreover that
$\overline{V}$ is the common zero of five $4\times 4$ Pfafffians of a $5\times 5$ skew-symmetric matrix
$\begin{spmatrix} 0 & m_{12} & m_{13} & m_{14} & m_{15}\\
& 0 & m_{23} & m_{24} & m_{25}\\
& & 0 & m_{34} & m_{35}\\
& & & 0 & m_{45}\\
& & & & 0 \end{spmatrix}$
such that all the $m_{ij}$ are homogeneous with respect to the above weights and $m_{ij}\, (2\leq i<j \leq 5)$ are contained in the ideal $(y_1,y_2,y_3,y_4)$.  
Then we call the Type I projection $V\dashrightarrow \overline{V}$ from ${\sf p}$ {\em a Type I Tom projection}.
\end{enumerate}
\end{defn}

\begin{rem}
A Type I Jerry projection is also defined but we do not quote this here.
\end{rem}

In this subsection, we use freely
what we have already seen in the subsection \ref{MainC}.

\begin{lem}
\label{lem:Tom}
Let $S_{\rm{Tom}}$ be the polynomial ring over $\mC$ with variables $\bm{p},p_4,\bm{q},r,S,\bm{t}$.
Let $I_{\rm{Tom}}$ be the ideal of
$S_{\rm{Tom}}$ generated by
the five $4\times 4$ Pfaffians of the skew-symmetric matrix
$(\ref{eq:Pf})$.
The following assertions hold\,$:$
\begin{enumerate}[$(1)$]
\item
The variables $p_1,p_2,p_3,p_4$ form a regular sequence in $S_{\rm{Tom}}$.
\item
$I_{\rm{Tom}}$ is a Gorenstein ideal of codimension three.
\item
$I_{\rm{Tom}}$ is a prime ideal.
\end{enumerate}
\end{lem}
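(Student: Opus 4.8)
\textbf{Proof proposal for Lemma \ref{lem:Tom}.}

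The plan is to prove the three assertions in the order (1), then (2), then (3), since each feeds the next. For (1), the ring $S_{\rm{Tom}}$ is a polynomial ring in the $18$ variables $\bm{p},p_4,\bm{q},r,S,\bm{t}$ (here $p_1,\dots,p_3$ together with $p_4$ are genuine coordinates), so $p_1,p_2,p_3,p_4$ form a regular sequence simply because they are part of a system of variables; this is immediate and needs no computation. The point of recording it is that it will let me apply the unprojection machinery: in the Tom format of \cite[\S 5]{WithC}, the coordinates $z_1,z_2,z_3,z_4$ — which correspond to $p_1,p_2,p_3,p_4$ in our dictionary — must form a regular sequence for the structure theory to apply.

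For (2), I would invoke the Buchsbaum--Eisenbud structure theorem: the $4\times 4$ Pfaffians of a $5\times 5$ skew-symmetric matrix always generate an ideal that is either the unit ideal or a Gorenstein ideal of codimension exactly three, provided the codimension is at least three (equivalently, the Pfaffian complex is exact). So the real content is the codimension bound $\codim I_{\rm{Tom}}\ge 3$, i.e.\ $\dim V(I_{\rm{Tom}})\le 15$. I would estimate this by exhibiting the scheme $V(I_{\rm{Tom}})$ as a fibration or stratification over the parameter space of $(\bm{q},r,S,\bm{t})$: on the locus where the skew matrix $(\ref{eq:Pf})$ has generic rank, the five Pfaffians cut the $\bm{p}$-fibre (a $\mP^3$ worth of directions once we projectivize, i.e.\ $\mA^4$) down to something of dimension $1$, matching the expected $14=18-4$; the deficient-rank loci are of smaller total dimension. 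Alternatively — and more cleanly — I can cite the fact that the generic integral Tom of \cite[\S 5.1, Thm.~5.6]{WithC} has codimension three and then note that $I_{\rm{Tom}}$ is obtained from it by specializing the entries $a^k_{ij}$ to the values $0,1,s_{kl},t_m$; one checks that this specialization is generic enough that the codimension is preserved, e.g.\ by producing a single explicit point of $V(I_{\rm{Tom}})$ where the Jacobian of the five Pfaffians has rank $3$, which forces $\codim\ge 3$ at that point and hence (by the Pfaffian complex being a complex everywhere and exact in codimension $3$) everywhere.

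For (3), primality, I would argue that $V(I_{\rm{Tom}})$ is irreducible and reduced. Reducedness follows from (2) once I know the scheme is generically reduced: a Gorenstein (hence Cohen--Macaulay) scheme with no embedded components is reduced iff it is generically reduced, and generic reducedness is checked at the single smooth point produced above. For irreducibility, the cleanest route is again via the charts: the $p_4$-chart of $V(I_{\rm{Tom}})$ (setting $p_4=1$) is defined by the five Pfaffians of $(\ref{eq:Pf})$ with $p_4=1$, and solving the linear-in-$(p_1,p_2,p_3)$ relations realizes this open set as an affine space (compare the computation of the $p_4$-chart of $\Sigma^{14}_{\mA}$ in \S\ref{Charts}, which exhibits it as smooth and rational of the expected dimension); similarly the $q_i$-charts are affine spaces after eliminating one $p$-variable. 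Since these charts are irreducible of dimension $15$ and overlap, their union is an irreducible $15$-dimensional open subset $U$ of $V(I_{\rm{Tom}})$; it remains to check that $V(I_{\rm{Tom}})\setminus U$, which lies in $\{p_4=q_1=q_2=q_3=0\}$, has dimension $<15$ and therefore cannot be a separate component. On that locus the matrix $(\ref{eq:Pf})$ has its first row and column essentially vanishing except for the $r$ entry, and the Pfaffians degenerate drastically; a short dimension count bounds this closed piece well below $15$. Hence $V(I_{\rm{Tom}})$ is irreducible, and being also reduced, $I_{\rm{Tom}}$ is prime.

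I expect the main obstacle to be the codimension bound in step (2) — i.e.\ confirming that our particular specialization of the Tom format (with many entries set to $0$ and $1$) is still ``generic enough'' that the Pfaffian ideal does not drop to codimension $2$ or become the unit ideal on some stratum. The honest way through this is the Jacobian-rank check at one explicit point, which simultaneously certifies $\codim I_{\rm{Tom}}\ge 3$, generic smoothness, and hence generic reducedness, so this single computation does triple duty for steps (2) and (3).
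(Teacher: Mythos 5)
Your overall skeleton --- (1) is immediate, (2) reduces via Buchsbaum--Eisenbud \cite{BE} to the codimension bound $\codim I_{\rm{Tom}}\geq 3$, and (3) follows from irreducibility of a dense open set plus unmixedness --- is the same as the paper's. But two of the specific steps you lean on are flawed. First, the ``single Jacobian computation does triple duty'' shortcut for (2) does not work: a point of $V(I_{\rm{Tom}})$ where the Jacobian of the five Pfaffians has rank $3$ only bounds the dimension of the components passing through that point; it cannot rule out a separate component of codimension $\leq 2$ elsewhere, and the Buchsbaum--Eisenbud criterion needs $\mathrm{grade}\, I_{\rm{Tom}}\geq 3$ globally. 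You do also sketch a fibration over $\mA(\bm{q},r,S,\bm{t})$, which could be made to work, but the ``deficient-rank loci'' are exactly where the difficulty lies and you leave them unexamined. The paper's way of making the global count clean (following \cite[\S 3.2.4--3.2.6]{PHD}) is to map $\Sigma_{\rm{Tom}}$ to the affine cone $G$ over $G(2,5)$ by reading off the entries of $(\ref{eq:Pf})$: over $\{p_4\neq 0\}$ this map is an $(\mA^6\times\mA^*)$-bundle (one solves for $S,\bm{t}$ rather than for $\bm{p}$), so that locus is an irreducible $14$-fold, and $\Sigma_{\rm{Tom}}\cap\{p_4=0\}$ is checked separately to be $13$-dimensional.

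Second, for (3) your chart claims are false: the $p_4$-chart and the $q_i$-charts of this Pfaffian scheme are \emph{not} affine spaces --- the computations in the subsection \ref{Charts} that you cite show precisely that these charts have nonempty $7$-dimensional singular loci ${\sf S}_4$ and ${\sf S}_i$ (they are of $c(G(2,5))$ type, not smooth), and on $\{p_4=1\}$ the system $(rI+AS)\bm{p}=-p_4A\bm{t}$ cannot be solved for $\bm{p}$ globally because $rI+AS$ degenerates. The charts that are genuinely affine spaces are the $p_1$-, $p_2$-, $p_3$-charts. Your argument can be repaired either by switching to those charts (and then bounding $\Sigma_{\rm{Tom}}\cap\{\bm{p}=\bm{0}\}$) or, as the paper does, by taking the single open set $\{p_4\neq0\}$, whose irreducibility comes for free from the bundle structure over the irreducible cone $G$; the rest of your step (3) --- unmixedness from (2) kills any component hiding in the complement, and reducedness follows from Cohen--Macaulayness plus generic reducedness --- matches the paper. (Minor: $\Sigma_{\rm{Tom}}$ lives in $\mA^{17}$ and has dimension $14$, not $15$.)
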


\begin{proof}
The first assertion is obvious. 

We show the second assertion
following \cite[\S 3.2.4--3.2.6]{PHD} closely.
In the affine $10$-space $\mA^{10}$ with coordinates 
$\xi_{ij}\, (1\leq i<j\leq 5)$,
let $G$ be the closed subscheme defined by
the five $4\times 4$ Pfaffians
of the skew-symmetric matrix
$\begin{spmatrix}
0 & \xi_{12} & \xi_{13} & \xi_{14} & \xi_{15} \\ 
& 0 & \xi_{23} & \xi_{24} & \xi_{25} \\
& & 0 & \xi_{34} & \xi_{35} \\
& & & 0 & \xi_{45} \\
&&&&0
\end{spmatrix}$.
Note that $G$ is nothing but 
the affine cone over the Grassmannian $G(2,5)$, thus is irreducible and reduced.
In the affine space $\mA(\bm{p},p_4,\bm{q},r,S,\bm{t})$, let $\Sigma_{\rm{Tom}}$ be the closed subscheme defined by the ideal $I_{\rm{Tom}}$.
We define a morphism $p\colon \Sigma_{\rm{Tom}}\to G$ by setting
$\begin{spmatrix}
0 & \xi_{12} & \xi_{13} & \xi_{14} & \xi_{15} \\ 
& 0 & \xi_{23} & \xi_{24} & \xi_{25} \\
& & 0 & \xi_{34} & \xi_{35} \\
& & & 0 & \xi_{45} \\
&&&&0
\end{spmatrix}=
\begin{spmatrix}
0 & q_3 & -q_2 & q_1 & r\\
  & 0 & p_1 & p_2 & a_{25}\\
  &   & 0 & p_3 & a_{35}\\
  &   &   & 0         & a_{45}\\
  &   &   &           & 0 
\end{spmatrix}$,
where we recall
$\begin{spmatrix}
a_{25}\\
a_{35}\\
a_{45}
\end{spmatrix}
=\begin{spmatrix}
s_{13} & s_{23} & s_{33}\\
-s_{12} & -s_{22} & -s_{23}\\
s_{11} & s_{12} & s_{13}\\
\end{spmatrix}
\begin{spmatrix}
p_1\\
p_2\\
p_3
\end{spmatrix}
+{\scriptsize p_4}
\begin{spmatrix}
t_3\\
-t_2\\
t_1
\end{spmatrix}$.
Then it is easy to see
that
$\Sigma_{\rm{Tom}}\setminus \{p_4=0\}\to G$ is
a $(\mA^6 \times \mA^*)$-bundle. Therefore
$\Sigma_{\rm{Tom}}\setminus \{p_4=0\}$ is a $14$-dimensional variety.
Moreover, investigating $\Sigma_{\rm{Tom}}\cap \{p_4=0\}\to G$,
it is easy to see that $\Sigma_{\rm{Tom}}\cap \{p_4=0\}$ is $13$-dimensional.
Therefore $\Sigma_{\rm{Tom}}$ is $14$-dimensional, hence
is of codimension three in $\mA^{17}$, and then
$I_{\rm{Tom}}$ is Gorenstein and of codimension three by \cite{BE}.

We prove the third assertion showing
that $R_{\rm{Tom}}:=S_{\rm{Tom}}/I_{\rm{Tom}}$ is a domain. 
In the proof of the second assertion,
we have seen that 
$\Sigma_{\rm{Tom}}\setminus \{p_4=0\}$ is a $14$-dimensional variety, and
$\Sigma_{\rm{Tom}}\cap \{p_4=0\}$ is $13$-dimensional.
Therefore, since $I_{\rm{Tom}}$ is Gorenstein,
$\Sigma_{\rm{Tom}}$ is irreducible and reduced by unmixedness.
Consequently,
$R_{\rm{Tom}}$ is a domain. 
\end{proof}

\begin{prop}
\label{prop:Tom14}
Let $S_{\Sigma}$ be the polynomial ring over $\mC$ with variables $\bm{p},p_4,\bm{q},r,u,S,\bm{t}$.
Let $I_{\Sigma}$ be the ideal of the polynomial ring $S_{\Sigma}$ generated by the nine polynomials appearing in the equations $(\ref{Key1})$ and $(\ref{Key2})$.
Set $R_{\Sigma}:=S_{\Sigma}/I_{\Sigma}$.
The following assertions hold\,$:$
\begin{enumerate}[$(1)$]
\item
$I_{\Sigma}$ is a Gorenstein ideal of codimension four.
\item
$I_{\Sigma}$ is a prime ideal, thus
$\Sigma^{14}_{\mA}$ is irreducible and reduced.
\item
$\Sigma^{14}_{\mA}$ and $\Sigma^{14}_{\mA}\cap \{p_1=0\}$ are normal.
\item
$R_{\Sigma}$ is a UFD.
\end{enumerate}
\end{prop}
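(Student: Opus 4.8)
The plan is to leverage the unprojection structure already set up: $\Sigma^{14}_{\mA}$ is obtained by unprojecting $\Sigma_{\rm{Tom}}$ along the Tom divisor, and the variable $u$ is the unprojection variable, so that $R_{\Sigma}$ sits between $R_{\rm{Tom}}$ and its unprojection. Concretely, by the construction recalled from \cite{WithC}, the ring $R_{\Sigma}$ is generated over $R_{\rm{Tom}}$ by $u$ subject to the relations $u\bm{p}=(rI-AS)A\bm{t}$ and $up_4=F_4$, which exhibit $R_{\Sigma}$ as (the affine coordinate ring of) the unprojection of $\Sigma_{\rm{Tom}}$ in the ideal generated by $\bm{p},p_4$. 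First I would record that parts (1), (2), (3) are already in hand: $I_{\Sigma}$ is Gorenstein of codimension four and prime, and $\Sigma^{14}_{\mA}$ is normal. The goal is then to upgrade normality to factoriality.

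The key tool will be a Nagata-type or ``excision'' argument for UFDs: if $Y$ is a normal noetherian scheme, $D\subset Y$ an irreducible effective Cartier divisor with $Y\setminus D$ having trivial (or free, finitely generated, with the class of $D$ generating) divisor class group, then one controls $\mathrm{Cl}(Y)$ via the exact sequence $\mZ\cdot[D]\to \mathrm{Cl}(Y)\to \mathrm{Cl}(Y\setminus D)\to 0$. I would take $D=\Sigma^{14}_{\mA}\cap\{p_1=0\}$. By part (3) this divisor is normal, hence integral, so $[D]$ is a well-defined class; and on the open set $\{p_1\neq 0\}$ the $p_1$-chart computation in the subsection on charts shows $\Sigma^{14}_{\mA}\cap\{p_1\neq 0\}\cong \mA^{13}\times\mA^{1*}$, whose divisor class group is trivial. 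Hence $\mathrm{Cl}(\Sigma^{14}_{\mA})$ is generated by $[D]$, i.e. it is cyclic. It then remains to show $[D]$ itself is trivial in $\mathrm{Cl}(\Sigma^{14}_{\mA})$, equivalently that $D$ is a principal divisor — which is clear, since $D$ is cut out by the single equation $p_1=0$ on the affine variety $\Sigma^{14}_{\mA}$, and $p_1$ is a nonzerodivisor in the domain $R_{\Sigma}$ (part (2)). Therefore $\mathrm{Cl}(\Sigma^{14}_{\mA})=0$, and since $R_{\Sigma}$ is a noetherian normal domain, it is a UFD.

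The step I expect to require genuine care is the claim that $\Sigma^{14}_{\mA}\cap\{p_1=0\}$ is \emph{irreducible} (and not just reduced/normal in the sense that each component is). This is exactly why part (3) is stated for $\Sigma^{14}_{\mA}\cap\{p_1=0\}$ and not merely for $\Sigma^{14}_{\mA}$: a normal scheme is a disjoint union of its integral components, and one must know there is only one. I would argue this by exhibiting $\Sigma^{14}_{\mA}\cap\{p_1=0\}$ as connected — for instance via the cone structure (it is stable under the scaling $\mC^*$-action and contains the origin, so it is connected) — so that, being normal, it is integral; alternatively one checks directly that the generic point of the $p_1$-chart's boundary is a smooth point of a single component using the explicit Jacobian. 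Once irreducibility is in place, the divisor-class-group argument above is routine: set up the localization sequence for class groups of normal noetherian domains (e.g. \cite[Cor.~7.2, Nagata's theorem]{Fossum} or the standard exact sequence), plug in the triviality of $\mathrm{Cl}$ on the $p_1$-chart, and conclude. A secondary subtlety worth a sentence is confirming that $\mathrm{codim}$ of $\Sigma^{14}_{\mA}\setminus\{p_1\neq 0\}$ behaves well enough for the sequence to apply, but this is immediate since $\{p_1=0\}$ is a principal divisor, hence pure codimension one.
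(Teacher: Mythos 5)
Your argument for part (4) is essentially the paper's own: Nagata's theorem (equivalently, the localization sequence for divisor class groups) applied to the element $p_1$, with $(R_{\Sigma})_{p_1}$ a localization of a polynomial ring by the $p_1$-chart computation, and primality of $p_1$ reduced to irreducibility of $\Sigma^{14}_{\mA}\cap\{p_1=0\}$, which in turn is normality plus connectedness. Your connectedness argument (the locus is stable under the positively weighted $\mC^*$-action and contains the origin, so every point is joined to the origin by an orbit closure) is a legitimate and slightly more elementary variant of the paper's route, which instead passes to the weighted projectivization $\Sigma^{13}_{\mP}$ and uses connectedness of an ample divisor on an irreducible normal variety. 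You also correctly identify irreducibility of the hyperplane section as the delicate point and correctly explain why part (3) is stated for $\Sigma^{14}_{\mA}\cap\{p_1=0\}$ and not just for $\Sigma^{14}_{\mA}$.

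The genuine gap is that parts (1)--(3) are not proved but merely declared ``already in hand,'' and they are part of the statement; worse, your proof of (4) leans on (3). These parts carry most of the technical weight. For (1) the paper runs the Kustin--Miller unprojection machinery of Papadakis--Reid: one must verify that $\varphi\colon J_{\rm{Tom}}/I_{\rm{Tom}}\to S_{\rm{Tom}}/I_{\rm{Tom}}$, $p_i\mapsto F_i$, is injective (using primality of $I_{\rm{Tom}}$, itself established via the map to the cone over $G(2,5)$ and a dimension count) and satisfies the residue condition of \cite[Lem.~1.1]{WithoutC} via the comparison of the Pfaffian and Koszul resolutions. For (2) one needs that $u$ is a nonzerodivisor in $R_{\Sigma}$ (supplied by \cite[Thm.~1.5]{WithoutC}) together with the $u$-chart description to conclude $R_{\Sigma}$ is a domain. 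For (3), the normality of $\Sigma^{14}_{\mA}\cap\{p_1=0\}$ requires $S_2$ (from Gorensteinness and $p_1$ a nonzerodivisor) plus the codimension bound on the singular locus, whose only nontrivial piece is showing $\Sigma^{14}_{\mA}\cap\{p_1=p_2=p_3=0\}$ is $11$-dimensional; the paper does this by decomposing that locus into the two pieces $\{\bm{p}=\bm{0},\,p_4=0,\,F_1=\dots=F_4=0\}$ and $\{\bm{p}=\bm{0},\,u=0,\,\rank\begin{spmatrix} q_1&q_2&q_3\\ t_1&t_2&t_3\end{spmatrix}\leq 1,\,F_4=0\}$ and bounding each. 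None of this appears in your proposal, so as written it establishes only the implication $(1)\wedge(2)\wedge(3)\Rightarrow(4)$, not the proposition itself.
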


\begin{proof}
(1) We show the first assertion following closely the proof of \cite[Thm.~5.6]{WithC}.
Since $S_{\Sigma}$ is a polynomial ring over $\mC$ and $I_{\Sigma}$ is positively graded 
with suitable weights of variables (for example, the weights of No.24078 as in Table \ref{Table 1}),
we have only to show
the localization $I_{\Sigma,o}$ of
$I_{\Sigma}$ by the irrelevant maximal ideal of $S_{\Sigma}$ is a Gorenstein ideal of codimension four (see for example \cite[Exercises 3.6.20]{BH}).
Abusing notation, we denote by the same letter the objects obtaining by 
this localization of some objects.
We denote by $J_{\rm{Tom}}$ the ideal of $S_{\rm{Tom}}$ generated by the regular sequence $p_1,p_2,p_3,p_4$.
We define a homomorphism $\varphi\colon J_{\rm{Tom}}/I_{\rm{Tom}}\to S_{\rm{Tom}}/I_{\rm{Tom}}$  by $p_i\mapsto (F_i\ \text{mod} \ I_{\rm{Tom}})$ $(i=1,\dots,4)$, where $F_i$ is defined as in (\ref{F1F4}).
We intend to apply \cite[Thm.~1.5]{WithoutC} to our setting with this homomorphism $\varphi$. For this, we need to check the two conditions for $\varphi$ in \cite[Lem.~1.1]{WithoutC}.
Note that $F_4$ does not vanish
at the $r$-point of $\Sigma_{\rm{Tom}}$.
Therefore $\varphi$ is a nonzero map. Since $I_{\rm{Tom}}$ is prime by
Lemma \ref{lem:Tom} (3),
we see that $\varphi$ is injective, namely,  
the condition (i) of \cite[Lem.~1.1]{WithoutC} 
holds for $\varphi$.
We check the condition (ii) of \cite[Lem.~1.1]{WithoutC} 
holds for $\varphi$.
As we have observed in the subsection \ref{MainC},
the polynomials $F_1,\dots,F_4$ in (\ref{F1F4})
correspond to $g_1,\dots,g_4$ as in \cite[\S 5.3]{WithC}.
 Therefore the assertion corresponding to \cite[Thm.~5.5]{WithC} holds in our case by specialization from the generic Tom ideal.
Then, using the polynomials $F_1,\dots,F_4$, we obtain a map of complexes
from the Pfaffian resolution of $I_{\rm{Tom}}$ to
the Koszul resolution of $J_{\rm{Tom}}$.
It is straightforward to check that
the element 
$\varphi \in \Hom (J_{\rm{Tom}}/I_{\rm{Tom}}, S_{\rm{Tom}}/I_{\rm{Tom}})$
is $k_1$ in \cite[Thm.~3.2]{WithC} obtained from this map of complexes
(remember that $\omega_{S_{\rm{Tom}}/I_{\rm{Tom}}}\simeq S_{\rm{Tom}}/I_{\rm{Tom}}$).
Therefore, by \cite[Thm.~3.2]{WithC},
the condition (ii) of \cite[Lem.~1.1]{WithoutC} 
holds for $\varphi$.
Consequently, the first assertion follows from 
\cite[Thm.~1.5]{WithoutC}.
\vspace{3pt}

\noindent (2) To see the second assertion, we show that $R_{\Sigma}:=S_{\Sigma}/I_{\Sigma}$ is a domain.
By \cite[Thm.~1.5]{WithoutC},
$u$ is not a zero divisor after the localization of the irrelevant 
maximal ideal. Noting $R_{\Sigma}$ is positively graded with 
suitable weights of variables (for example, the weights of No.24078 as in Table \ref{Table 1}) , we see that
$u$ is not a zero divisor in $R_{\Sigma}$ by a standard argument of commutative algebra.
By the study of the $u$-chart as in the subsection \ref{Charts},
we see that 
$(R_{\Sigma})_u$ is a localization of
a polynomial ring, thus is a domain.
Then $R_{\Sigma}$ is a domain since $u$ is not a zero divisor.
\vspace{3pt}

\noindent (3)
We only show that $\Sigma^{14}_{\mA}\cap \{p_1=0\}$ is normal since we can show the normality of $\Sigma^{14}_{\mA}$ vabatim.

Since $R_{\Sigma}$ is a domain, $p_1$ is not a zero divisor. 
Therefore $\Sigma^{14}_{\mA}\cap \{p_1=0\}$ is Gorenstein since so is $\Sigma^{14}_{\mA}$. In particular, the affine coordinate ring of $\Sigma^{14}_{\mA}\cap \{p_1=0\}$ satisfies the $S_2$ condition.

It remains to show that $\Sing (\Sigma^{14}_{\mA}\cap \{p_1=0\})$ has codimension $\geq 2$ in $\Sigma^{14}_{\mA}\cap \{p_1=0\}$.
This can be shown separately on the $p_2$- and $p_3$-charts, and the locus $\{p_1=p_2=p_3=0\}$.
By the coordinate descriptions of the $p_2$- and $p_3$-charts as in the subsection \ref{Charts}, we immediately check that
$\Sigma^{14}_{\mA}\cap \{p_1=0\}$ is smooth there. Now it suffices to show that $\Sigma^{14}_{\mA}\cap \{p_1=p_2=p_3=0\}$ is $11$-dimensional. By the equations in (\ref{Key1}) and (\ref{Key2}), we have $\Sigma^{14}_{\mA}\cap \{p_1=p_2=p_3=0\}$ is the union of the following two closed subschemes (\ref{p1=0a}) and (\ref{p1=0b}):
\begin{align}
&\{\bm{p}=\bm{0}, p_4=0, F_1=F_2=F_3=F_4=0\}.\label{p1=0a}\\
&\left\{\bm{p}=\bm{0}, u=0, \rank \begin{spmatrix} q_1 & q_2 & q_3 \\ t_1 & t_2 & t_3 \end{spmatrix}\leq 1, F_4=0\right\}.\label{p1=0b}
\end{align}
As for (\ref{p1=0b}), it is the intersection between the affine cone over $\mP^2\times \mP^1$ with the vertex $\mA^8(p_4,r,S)$ and the hypersurface $\{F_4=0\}$. Therefore it is $11$-dimensional as desired.
As for (\ref{p1=0a}), we consider the projection to $\mA^{12}(\bm{q},S,\bm{t})$ (the elimination of $r$).
Then we see that the image is contained in the hypersurface $\{{\empty^t \bm{v}}S\bm{v}=0\}$, where ${\empty^t \bm{v}}:=\begin{spmatrix}
\begin{svmatrix} q_2 & q_3\\ t_2 & t_3 \end{svmatrix}&
-\begin{svmatrix} q_1 & q_3\\ t_1 & t_3 \end{svmatrix}&
\begin{svmatrix} q_1 & q_2\\ t_1 & t_2 \end{svmatrix}
\end{spmatrix}$.  
Moreover, by the equation $F_4=0$, the inverse image of any point by the projection consists of at most two points.
Therefore (\ref{p1=0a}) is at most $11$-dimensional as desired.

\vspace{3pt}

\noindent (4)  
Note that we have already shown that $R_{\Sigma}$ is a domain in (2).
Therefore, by Nagata's theorem \cite[Thm.~20.2]{Matsumura},
it suffices to show that 
the ring $(R_{\Sigma})_{p_1}$ is a UFD and
$p_1$ is a prime element of $R_{\Sigma}$.

By the description of the $p_1$-chart as in the subsection \ref{Charts},
we see that 
$(R_{\Sigma})_{p_1}$ is a localization of
a polynomial ring, thus is a UFD.

Note that the polynomial ring $S_{\Sigma}$ is positively graded with some weights of variables such that $I_{\Sigma}$ is homogeneous
(for example, the weights of No.24078 as in Table \ref{Table 1}).
Then the ring $R_{\Sigma}$ is also graded with this weights. In this situation, $p_1$ defines an ample divisor on the corresponding weighted projectivization $\Sigma^{13}_{\mP}$ of $\Sigma^{14}_{\mA}$. Note that
$\Sigma^{13}_{\mP}$ is irreducible and normal since so is $\Sigma^{14}_{\mA}$ and $\Sigma^{13}_{\mP}$ is its geometric $\mC^*$-quotient.
By a general property of an ample divisor on an irreducible normal variety, the divisor $\Sigma^{13}_{\mP}\cap \{p_1=0\}$ is connected and  hence 
so is $\Sigma^{14}_{\mA}\cap \{p_1=0\}$. Since $\Sigma^{14}_{\mA}\cap \{p_1=0\}$ is normal by (3), this is irreducible.
Therefore $p_1$ is a prime element of $R_{\Sigma}$.
\end{proof}

Though the following two corollaries can be stated in a more general setting, 
we restrict them to weighted projective settings since they are easy to apply in the section \ref{sec:Example} for the constructions of $\mQ$-Fano threefolds as in Theorem \ref{thm:main} (1).

\begin{cor}
\label{SigmaDiv}
Let $\Sigma^{13}_{\mP}$ be the weighted projectivization of $\Sigma^{14}_{\mA}$ with some positive weights of coordinates.
Any prime Weil divisor on $\Sigma^{13}_{\mP}$
is the intersection between $\Sigma^{13}_{\mP}$ and a weighted hypersurface.
In particular, $\Sigma^{13}_{\mP}$ is $\mQ$-factorial and has Picard number one.
\end{cor}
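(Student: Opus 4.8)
The plan is to deduce the statement from Proposition \ref{prop:Tom14}~(4), which says that the affine coordinate ring $R_\Sigma$ of $\Sigma^{14}_{\mA}$ is a UFD. First I would set up notation: write $R_\Sigma = \bigoplus_{d\geq 0}(R_\Sigma)_d$ for the grading induced by the chosen positive weights of coordinates, so that $\Sigma^{13}_{\mP} = \Proj R_\Sigma$, and recall from Proposition \ref{prop:Tom14} that $R_\Sigma$ is a normal Noetherian domain. The point is the standard fact that if a graded Noetherian domain $R$ is a UFD, then its $\Proj$ has the property that every Weil divisor is linearly equivalent to (indeed cut out by) a hypersurface section. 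Concretely, let $D$ be a prime Weil divisor on $\Sigma^{13}_{\mP}$ and let $\widehat{D}\subset \Sigma^{14}_{\mA}$ be its affine cone (the closure of the preimage of $D$ under $\Sigma^{14}_{\mA}\setminus\{o\}\to \Sigma^{13}_{\mP}$, together with the vertex). Then $\widehat{D}$ is an irreducible codimension-one closed subvariety of the normal affine variety $\Spec R_\Sigma$, hence corresponds to a height-one prime ideal $\mathfrak{p}\subset R_\Sigma$; this ideal is homogeneous because $\widehat{D}$ is a cone, i.e. $\mathbb{C}^*$-invariant. Since $R_\Sigma$ is a UFD, every height-one prime is principal, so $\mathfrak{p} = (f)$ for some $f\in R_\Sigma$, and $f$ may be taken homogeneous (a nonzero homogeneous generator exists because $\mathfrak{p}$ is homogeneous: take a homogeneous element of $\mathfrak{p}$ of minimal degree, which must generate by the UFD property). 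Thus $\widehat{D} = \Spec R_\Sigma/(f)$, and passing back to $\Proj$ shows $D = \Sigma^{13}_{\mP}\cap\{f=0\}$, a weighted hypersurface section.

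Next I would extract the two consequences. For $\mQ$-factoriality: given any Weil divisor on $\Sigma^{13}_{\mP}$, write it as a $\mZ$-combination of prime Weil divisors, each of which by the above is cut out by a homogeneous $f_i$; then a suitable multiple of the divisor is the divisor of a rational function (a ratio of products of the $f_i$ with matching degrees), which gives $\mQ$-Cartier-ness, hence $\mQ$-factoriality. For Picard number one: the class group $\mathrm{Cl}(\Sigma^{13}_{\mP})$ is generated by the classes of the hypersurface sections $\{f=0\}$; but for homogeneous $f,g$ of degrees $a,b$ the rational function $f^b/g^a$ shows $b[\{f=0\}] = a[\{g=0\}]$ in $\mathrm{Cl}$, so $\mathrm{Cl}(\Sigma^{13}_{\mP})\otimes\mQ$ is one-dimensional, generated by $\sO(1)$. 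Therefore $\mathrm{Pic}(\Sigma^{13}_{\mP})$, being a subgroup of $\mathrm{Cl}$, has rank at most one, and it has rank exactly one because $\sO(d)$ is an honest line bundle (Cartier) for $d$ divisible by all the weights. Hence the Picard number is one.

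The one technical point that needs a little care — and the closest thing to an obstacle — is the passage between Weil divisors on the affine cone and on its projectivization, and the homogeneity claim for the generator $f$. Here I would invoke the standard dictionary (as in Hartshorne's exercises on $\Proj$, or Benveniste/Mori-type arguments): because $\Sigma^{14}_{\mA}$ is normal with $R_\Sigma$ a positively graded domain whose degree-zero part is $\mC$, the cone map identifies $\mathbb{C}^*$-invariant prime divisors on the punctured cone with prime divisors on $\Proj R_\Sigma$, and homogeneous height-one primes of $R_\Sigma$ with prime Weil divisors on $\Sigma^{13}_{\mP}$. That a height-one homogeneous prime in a graded UFD is generated by a single homogeneous element follows because its (unique up to units) generator as a principal ideal must itself be homogeneous: if $f=f_{d_0}+\dots+f_{d_1}$ is a generator with $d_0<d_1$, then $\mathfrak{p}$ homogeneous forces the top and bottom homogeneous components $f_{d_0},f_{d_1}\in\mathfrak{p}=(f)$, which is impossible for degree reasons unless $f$ is homogeneous. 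With this in hand the corollary follows as sketched; everything else is routine.
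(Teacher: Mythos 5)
Your proposal is correct and follows essentially the same route as the paper: the paper's proof likewise takes the homogeneous height-one prime ideal $\frak{p}\subset R_{\Sigma}$ of the divisor and invokes the UFD property of $R_{\Sigma}$ from Proposition \ref{prop:Tom14} (via \cite[Thm.~20.1]{Matsumura}) to conclude $\frak{p}$ is principal. The paper simply leaves implicit the details you spell out (homogeneity of the generator, the cone dictionary, and the deduction of $\mQ$-factoriality and Picard number one), all of which you handle correctly.
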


\begin{proof}
Let $\frak{p}\subset R_{\Sigma}$ be the homogeneous ideal of a prime divisor $D$. Then $\frak{p}$ is an ideal of 
height one.
Since $R_{\Sigma}$ is a UFD by Proposition \ref{prop:Tom14} (3),
its ideal of height one is principal by \cite[Thm.~20.1]{Matsumura}.
Thus the assertion follows.
\end{proof}

\begin{cor}
\label{QFanoDiv}
Let $\Sigma^{13}_{\mP}$ be the weighted projectivization of $\Sigma^{14}_{\mA}$ with some positive weights of coordinates.
Let $X$ be a quasi-smooth threefold
such that $X$ is a codimension $10$ weighted complete intersection
in $\Sigma^{13}_{\mP}$, i.e., there exist ten weighted homogeneous
polynomials 
$F_1,\dots, F_{10}$ such that $X=\Sigma^{13}_{\mP}\cap \{F_1=0\}\cap\dots\cap\{F_{10}=0\}$. Assume moreover that $\{p_1=0\}\cap X$ is a prime divisor. 
Then any prime Weil divisor on $X$
is the intersection between $X$ and a weighted hypersurface. In particular, $X$ is $\mQ$-factorial and has Picard number one.
\end{cor}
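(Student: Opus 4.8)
The plan is to reduce the Corollary, exactly as in the proof of Corollary~\ref{SigmaDiv}, to the single statement that the affine coordinate ring $A_X:=R_{\Sigma}/(F_1,\dots,F_{10})$ of the affine cone $C_X:=\Sigma^{14}_{\mA}\cap\{F_1=\dots=F_{10}=0\}$ over $X$ is a UFD. Granting this, the homogeneous prime ideal of any prime Weil divisor $D\subset X$ is of height one, hence principal and generated by one homogeneous element, so $D$ is cut on $X$ by a single weighted hypersurface; consequently $\mathrm{Cl}(X)=\mZ\cdot[\sO_X(1)]$, which forces $\mQ$-factoriality and Picard number one.

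To see that $A_X$ is a UFD I would first note that it is a normal domain. As $\dim C_X=4$ and $\Sigma^{14}_{\mA}$ is Gorenstein of dimension $14$ by Proposition~\ref{prop:Tom14}~(1), the polynomials $F_1,\dots,F_{10}$ form a regular sequence in $R_{\Sigma}$, so $A_X$ is Cohen--Macaulay. Quasi-smoothness of $X$ says that $C_X$ is regular away from the cone point $\bm{0}$, which is of codimension $4$; hence $C_X$ is $R_1$ and $S_2$, thus normal, and being a connected cone it is irreducible, so $A_X$ is a normal domain.

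Next I would follow the proof of Proposition~\ref{prop:Tom14}~(4) and apply Nagata's theorem \cite[Thm.~20.2]{Matsumura}: it suffices to verify that $p_1$ is a prime element of $A_X$ and that $(A_X)_{p_1}$ is a UFD. Primality of $p_1$ is where the hypothesis of the Corollary enters: $A_X/(p_1)$ is the homogeneous coordinate ring of $X\cap\{p_1=0\}$, which is integral by assumption, and the affine cone over an integral projective scheme is again integral; since moreover $p_1$ is a nonzero non-unit of $A_X$ (because $X\cap\{p_1=0\}$ is a proper closed subscheme of $X$), $p_1$ is prime. For the localization I would use the $p_1$-chart of $\Sigma^{14}_{\mA}$ computed in \S\ref{Charts}, namely $\Sigma^{14}_{\mA}\cap\{p_1\neq0\}\cong\mA^{13}\times\mA^{1*}$; restricting $F_1,\dots,F_{10}$ this gives $C_X\cap\{p_1\neq0\}\cong V\times\mA^{1*}$, where $V:=C_X\cap\{p_1=1\}$ is the common zero of ten polynomials in $\mA^{13}$ and has dimension $3$, hence is a complete intersection, and is smooth because $\bm{0}\in\{p_1=0\}$. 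Thus $(A_X)_{p_1}\cong\sO(V)[p_1^{\pm1}]$ is a UFD provided $V$ is factorial.

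The factoriality of the slice $V$ is the one step I expect to be the real obstacle. Since $V$ is smooth, all of its local rings are factorial and $\mathrm{Cl}(V)=\Pic(V)$, so it remains to show $\Pic(V)=0$. Here I would invoke the Lefschetz theorem for affine complete intersections of Hamm and L\^e: a smooth affine complete intersection of complex dimension $d$ is $(d-1)$-connected, so for $d=3$ we get $H_1(V,\mZ)=H_2(V,\mZ)=0$, hence $H^2(V,\mZ)=0$; combined with $H^1(V,\sO_V)=0$ (affineness) and the exponential sequence, this gives $\Pic(V)=0$. Putting the pieces together shows $A_X$ is a UFD and finishes the proof. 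The delicate point to keep straight is that one really needs $V$ to be a \emph{smooth} complete intersection inside an \emph{affine} space of the right dimension: this is exactly what the explicit $p_1$-chart together with quasi-smoothness supplies, and the conclusion would fail in dimension $2$.
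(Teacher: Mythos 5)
Your overall architecture matches the paper's: reduce to showing that $R_X:=R_{\Sigma}/(F_1,\dots,F_{10})$ is a UFD, apply Nagata's theorem with the prime element $p_1$ (its primality coming from the hypothesis on $X\cap\{p_1=0\}$), and use the $p_1$-chart of the subsection \ref{Charts} to identify the localization inverting $p_1$ with a localization of a complete intersection ring. The divergence --- and the gap --- is in the last step. You reduce to the \emph{global} statement $\Pic(V)=0$ for the smooth affine threefold slice $V=C_X\cap\{p_1=1\}\subset\mA^{13}$, and you try to obtain it from the claim that a smooth affine complete intersection of dimension $d$ is $(d-1)$-connected. No such theorem exists: $\{x_1x_2x_3x_4=1\}\subset\mC^4$ is a smooth affine hypersurface of dimension $3$, isomorphic to $(\mC^*)^3$, with $H_1\neq 0$ and $H^2\neq 0$. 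Worse, the conclusion you need cannot be extracted from ``smooth affine complete intersection of dimension three'' alone: $\{x^2+y^2+z^2=1\}\subset\mC^4$ is such a variety, and its coordinate ring $\mC[x,y,z,w]/(x^2+y^2+z^2-1)$ has class group $\mZ$ (already $\mC[x,y,z]/(x^2+y^2+z^2-1)$ fails to be a UFD, via $(x+iy)(x-iy)=(1-z)(1+z)$). The Hamm--L\^e/Andreotti--Frankel circle of results controls homotopy \emph{above} the middle dimension of a Stein space, or applies to generic (projectively completed) complete intersections; it gives no control of $H_1,H_2$ for a special affine slice such as $V$.

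The paper sidesteps this by staying local: it first replaces $R_X$ by its localization $R_{X,o}$ at the irrelevant maximal ideal, which is legitimate for graded rings by \cite[Prop.~7.4]{UFD}, so that after inverting $p_1$ one is dealing with localizations of a complete intersection of dimension four attached to the vertex of the cone; it then invokes Grothendieck's local Lefschetz theorems \cite[Cor.~3.10 and Thm.~3.13]{sga2} (parafactoriality and Samuel's conjecture for complete intersection local rings of dimension $\geq 4$ which are factorial in low codimension --- supplied here by quasi-smoothness). The essential point your argument misses is that factoriality in this setting is a consequence of a \emph{local}, dimension-$\geq 4$ phenomenon at the cone point, not of a global topological vanishing on the three-dimensional slice $V$. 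Your own remark that this step is ``the real obstacle'' is exactly right, but the tool you reach for does not exist, so the proof as written does not close.
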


\begin{proof}
Once we check that 
$R_X:=R_{\Sigma}/\left(F_1,\dots,F_{10}\right)$
is a UFD, the proof of Corollary \ref{SigmaDiv} works verbatim here.
Let $R_{X,o}$ be the localization of $R_X$ by the maximal
irrelevant ideal.  
Note that, by \cite[Prop.~7.4]{UFD},
$R_X$ is a UFD if and only if 
so is $R_{X,o}$. Thus we have only to check the latter.
To check $R_{X,o}$ is a UFD, we apply Nagata's theorem as in the proof of
Proposition \ref{prop:Tom14} (3). For this, we need to check
that $p_1$ is a prime element of $R_{X,o}$, which follows from our assumption, and
$(R_{X,o})_{p_1}$ is a UFD, which we will check below.
We denote by $R_{\Sigma,o}$
the localization of $R_{\Sigma}$ by the maximal
irrelevant ideal. 
Since $\left(R_{\Sigma,o}\right)_{p_1}$ is a localization of a polynomial ring by the description of the $p_1$-chart as in the subsection \ref{Charts},
we see that $(R_{X,o})_{p_1}$ is a localization of a complete intersection ring. 
Therefore $(R_{X,o})_{p_1}$ is a complete intersection local ring of dimension four. Since $X$ is quasi-smooth, we see that $(R_{X,o})_{p_1}$ is a UFD by 
\cite[Cor.~3.10 and Thm.~3.13]{sga2}. 
\end{proof}

\section{\bf Affine variety $\Sigma_{\mA}^{13}$}
\label{13}

Recall that we define $\SigmaA:=\Sigma_{\mA}^{14}\cap\{s_{33}=1\}$.

\begin{lem}
\label{lem:1314}
There exists 
an isomorphism from $\Sigma^{14}_{\mA}\cap \{s_{33}\not =0\}$ to
$\Sigma^{13}_{\mA}\times (\mA^1)^{*}$.
Moreover,
this induces an isomorphism from $($the $p_1$-chart of $\Sigma^{14}_{\mA})\cap \{s_{33}\not =0\}$ to
$($the $p_1$-chart of $\Sigma^{13}_{\mA})\times(\mA^1)^{*}$.
\end{lem}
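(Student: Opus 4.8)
The plan is to realize $\Sigma^{14}_{\mA}\cap\{s_{33}\neq 0\}$ as a trivial $\mC^{*}$-bundle over the slice $\{s_{33}=1\}$, which equals $\SigmaA$ by the very definition recalled at the start of this section. The crux is that the ideal $I_{\Sigma}$ generated by the nine polynomials of $(\ref{Key1})$ and $(\ref{Key2})$ is homogeneous for a $\mZ$-grading in which $s_{33}$ has nonzero weight.

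First I would fix such a grading: assign weight $0$ to each entry of $\bm{p}$, to $p_4$ and to each entry of $\bm{q}$; weight $1$ to $r$, to each of the six entries of $S$ and to each entry of $\bm{t}$; and weight $2$ to $u$. Since $A$ is linear in $\bm{q}$, it has weight $0$, so $(rI+AS)\bm{p}$ and $p_4A\bm{t}$ are homogeneous of degree $1$; ${}^{t}\bm{p}S\bm{p}+p_4\,{}^{t}\bm{p}\bm{t}$ is homogeneous of degree $1$; ${}^{t}\bm{p}\bm{q}$ of degree $0$; $u\bm{p}$ and $(rI-AS)A\bm{t}$ of degree $2$; and $up_4$, $r^{2}$, ${}^{t}\bm{q}S^{\dagger}\bm{q}$ of degree $2$ (the adjugate $S^{\dagger}$ having weight $2$). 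Hence all nine generators of $I_{\Sigma}$ are homogeneous, so the $\mC^{*}$-action
\[
\mu\cdot(\bm{p},p_4,\bm{q},r,u,S,\bm{t})=(\bm{p},p_4,\bm{q},\mu r,\mu^{2}u,\mu S,\mu\bm{t})
\]
preserves $\Sigma^{14}_{\mA}$ and scales $s_{33}$ by $\mu$. (Alternatively one may use the positive grading given by the weights of No.24078 in Table \ref{Table 1}, already observed to make $I_{\Sigma}$ homogeneous, where again $w(s_{33})=1$.)

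Next I would write the isomorphism down explicitly. Let $\Phi$ send $v=(\bm{p},p_4,\bm{q},r,u,S,\bm{t})\in\Sigma^{14}_{\mA}\cap\{s_{33}\neq 0\}$ to the pair $(s_{33}^{-1}\!\cdot v,\ s_{33})$, where $s_{33}^{-1}\!\cdot v$ means acting by $\mu=s_{33}^{-1}$. Because the action rescales $s_{33}$ by $\mu$, the first component has $s_{33}$-entry equal to $1$, and since the action preserves $\Sigma^{14}_{\mA}$, that component lies in $\Sigma^{14}_{\mA}\cap\{s_{33}=1\}=\SigmaA$ (after identifying $\mA^{18}\cap\{s_{33}=1\}$ with $\mA^{17}$). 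The candidate inverse sends $(w,\mu)\mapsto\mu\cdot w$: here $w\in\SigmaA$ has $s_{33}$-entry $1$, so $\mu\cdot w$ has $s_{33}$-entry $\mu\neq 0$ and lies in $\Sigma^{14}_{\mA}$ by homogeneity. Both maps are morphisms, since $s_{33}$ is invertible on the source and $\mu$ on the target, and they are plainly mutually inverse. Finally, the action fixes $p_1$, so $p_1(v)\neq 0$ if and only if $p_1$ of the first component of $\Phi(v)$ is nonzero; hence $\Phi$ restricts to an isomorphism from $(\text{the }p_1\text{-chart of }\Sigma^{14}_{\mA})\cap\{s_{33}\neq 0\}$ onto $(\text{the }p_1\text{-chart of }\SigmaA)\times(\mA^{1})^{*}$.

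I do not expect a genuine obstacle. The only steps requiring care are the (routine) check that every generator of $I_{\Sigma}$ is homogeneous under a grading giving $s_{33}$ nonzero weight, and the observation that the hyperplane section $\{s_{33}=1\}$ of $\Sigma^{14}_{\mA}$ is, scheme-theoretically, exactly $\SigmaA$ — which is immediate from Definition \ref{defn:Key}. Everything else is the standard fact that a $\mC^{*}$-scheme is trivial over a coordinate slice transverse to its orbits, here made concrete by the explicit maps $\Phi$ and $\Phi^{-1}$ above.
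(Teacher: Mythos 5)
Your proposal is correct and follows essentially the same route as the paper: one exhibits a $\mC^*$-rescaling under which the nine defining equations are homogeneous and $s_{33}$ has weight one, and uses it to trivialize $\Sigma^{14}_{\mA}\cap\{s_{33}\neq 0\}$ over the slice $\{s_{33}=1\}=\Sigma^{13}_{\mA}$. The only (immaterial) difference is the choice of grading — the paper rescales $\bm{p},p_4,r,u,S,\bm{t}$ all by $s_{33}^{-1}$ and fixes $\bm{q}$, whereas you fix $\bm{p},p_4,\bm{q}$ and rescale $u$ by $s_{33}^{-2}$ — and both choices make $I_{\Sigma}$ homogeneous, so both yield the asserted isomorphism and its compatibility with the $p_1$-chart.
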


\begin{proof}
We can define a morphism $\Sigma^{14}_{\mA}\cap \{s_{33}\not =0\}\to \Sigma^{13}_{\mA}\times (\mA^1)^{*}$ by
\begin{align*}
&\Sigma^{14}_{\mA}\cap \{s_{33}\not =0\}
\ni
(\bm{p},p_4,\bm{q}, r,u, S,\bm{t})
\mapsto\\
&\quad\left((s_{33}^{-1}\bm{p},s_{33}^{-1}p_4,\bm{q}, s_{33}^{-1}r,s_{33}^{-1}u,s_{33}^{-1}S,s_{33}^{-1}\bm{t}),s_{33}\right)\in \Sigma^{13}_{\mA}\times (\mA^1)^{*}.
\end{align*}
We can show easily that this is an isomorphism and the latter assertion holds. 
\end{proof}

We set ${\sf S}'_i:={\sf S}_i\cap \Sigma^{13}_{\mA}$ $(i=1,\dots,4)$ and $\overline{\sf S}':=\overline{\sf S}\cap \Sigma^{13}_{\mA}$, where we recall that ${\sf S}_i$ and $\overline{\sf S}$ are defined as in the subsection \ref{Charts}. We easily see that ${\sf S}'_1\cup {\sf S}'_2\subset {\sf S}'_3$ from the definition of ${\sf S}_i$. Therefore  $\overline{\sf S}'$ is the closure of ${\sf S}'_3$.
We also denote by $S'$ the matrix obtained from $S$ by setting $s_{33}=1$, and let $\mA(S',\bm{t}):=\mA(S,\bm{t})_{|s_{33}=1}$ and
$\mA(p_4,S'):=\mA(p_4,S)_{|s_{33}=1}$.

\vspace{3pt}

By Proposition \ref{prop:SinglociSigma} and Lemma \ref{lem:1314}, we have the following:
\begin{prop}
\label{prop:Sing13}
The singular locus of $\Sigma^{13}_{\mA}$ is equal to $\mA(p_4,S')\cup \mA(S',\bm{t})\cup \overline{\sf S}'$,
where $\dim \mA(p_4,S')=\dim \overline{\sf S}'=6$ and $\dim \mA(S',\bm{t})=8$.
\end{prop}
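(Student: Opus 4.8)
The plan is to transport the structural result of Proposition \ref{prop:SinglociSigma} across the isomorphism of Lemma \ref{lem:1314}, which identifies $\Sigma^{14}_{\mA}\cap\{s_{33}\neq 0\}$ with $\Sigma^{13}_{\mA}\times(\mA^1)^{*}$. Under this identification, the singular locus of the left-hand side is $\bigl(\Sing\Sigma^{13}_{\mA}\bigr)\times(\mA^1)^{*}$, since singularity is detected \'etale-locally and a product with a smooth curve factor does not affect it. On the other hand, $\Sing\bigl(\Sigma^{14}_{\mA}\cap\{s_{33}\neq 0\}\bigr)=\bigl(\Sing\Sigma^{14}_{\mA}\bigr)\cap\{s_{33}\neq 0\}$ because $\{s_{33}=0\}$ is not a component of $\Sing\Sigma^{14}_{\mA}$ (indeed $p_1$ is not a zero divisor and $\Sing$ has codimension $\geq 2$, but more to the point we simply restrict the open set). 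So I would intersect the description $\Sing\Sigma^{14}_{\mA}=\mA(p_4,S)\cup\mA(S,\bm{t})\cup\overline{\sf S}$ with $\{s_{33}\neq 0\}$, and then identify the resulting pieces with the asserted pieces of $\Sing\Sigma^{13}_{\mA}$ times the $(\mA^1)^{*}$ factor.

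Concretely, first I would note that $\mA(p_4,S)\cap\{s_{33}\neq 0\}=\varnothing$ since $s_{33}$ is one of the entries of $S$, so this component drops out entirely; its role is played in $\Sigma^{13}_{\mA}$ by $\mA(p_4,S')=\mA(p_4,S)_{|s_{33}=1}$, which has dimension $7-1=6$. Wait — one must be careful here: $\mA(p_4,S)$ sits inside $\{s_{33}=0\}$, so naively it contributes nothing to the $s_{33}\neq 0$ chart. The correct statement is that $\mathop{\overline{\mathcal{S}}}$, $\mA(p_4,S)$ and $\mA(S,\bm t)$ are the \emph{irreducible components} of $\Sing\Sigma^{14}_{\mA}$, and the slice $\Sigma^{13}_{\mA}$ meets only those whose generic point has $s_{33}\neq 0$, together with... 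Let me restructure: the cleaner route is to observe that $\Sigma^{13}_{\mA}=\Sigma^{14}_{\mA}\cap\{s_{33}=1\}$ is a hyperplane section by the non-zero-divisor $s_{33}-1$ of the coordinate ring, and for any component $Z$ of $\Sing\Sigma^{14}_{\mA}$ not contained in $\{s_{33}=1\}$, $Z\cap\{s_{33}=1\}$ has dimension $\dim Z-1$ provided it is nonempty; one checks $\mA(p_4,S)\cap\{s_{33}=1\}$, $\mA(S,\bm t)\cap\{s_{33}=1\}$, $\overline{\sf S}\cap\{s_{33}=1\}$ are all nonempty (the first two obviously, the last because $\overline{\sf S}$ is defined by equations homogeneous enough that the $s_{33}=1$ slice is proper and nonempty), giving dimensions $6$, $8$, $6$ respectively.

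I would then argue that these three slices exhaust $\Sing\Sigma^{13}_{\mA}$: a point $x\in\Sigma^{13}_{\mA}$ is singular on $\Sigma^{13}_{\mA}$ if and only if, viewing $x\times\{1\}\in\Sigma^{14}_{\mA}\cap\{s_{33}\neq 0\}\cong\Sigma^{13}_{\mA}\times(\mA^1)^{*}$, the point $(x,1)$ is singular on $\Sigma^{14}_{\mA}$ (this is exactly the product statement from Lemma \ref{lem:1314} applied to singular loci), hence lies in one of $\mA(p_4,S)$, $\mA(S,\bm t)$, $\overline{\sf S}$ with $s_{33}=1$, i.e.\ in $\mA(p_4,S')$, $\mA(S',\bm t)$, or $\overline{\sf S}'$. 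The dimension statements for $\mA(S',\bm t)$ and $\mA(p_4,S')$ are immediate from their definitions as the $s_{33}=1$ slices of $\mA(S,\bm t)$ (dimension $9\to 8$) and $\mA(p_4,S)$ (dimension $7\to 6$), and for $\overline{\sf S}'$ from the remark already made in the excerpt that $\overline{\sf S}'$ is the closure of ${\sf S}'_3={\sf S}_3\cap\Sigma^{13}_{\mA}$, which is $\mA^6\times(\mA^1)^{*}$ intersected with $\{s_{33}=1\}$, hence $6$-dimensional.

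The only genuinely delicate point — and the one I would be most careful about — is the bookkeeping around $\mA(p_4,S)$: since it lies in $\{s_{33}=0\}$, one cannot simply say "restrict to $s_{33}\neq 0$"; rather one uses that $\Sigma^{13}_{\mA}$ is a different slice $\{s_{33}=1\}$, and $\mA(p_4,S)$, being a linear subspace transverse to the family of hyperplanes $\{s_{33}=c\}$, meets $\{s_{33}=1\}$ in a codimension-one piece. So the clean formulation is: apply Lemma \ref{lem:1314} to get $\Sing\Sigma^{13}_{\mA}\times(\mA^1)^{*}=\Sing\bigl(\Sigma^{14}_{\mA}\bigr)\cap\{s_{33}\neq 0\}$, write the right side using Proposition \ref{prop:SinglociSigma}, and translate each component back through the isomorphism; the three components of $\Sing\Sigma^{14}_{\mA}$ restricted to $\{s_{33}\neq0\}$ correspond under the isomorphism precisely to $\mA(p_4,S')\times(\mA^1)^{*}$, $\mA(S',\bm t)\times(\mA^1)^{*}$, $\overline{\sf S}'\times(\mA^1)^{*}$, and stripping the $(\mA^1)^{*}$ factor and reading off dimensions finishes the proof. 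This is short enough that the author's one-line "By Proposition \ref{prop:SinglociSigma} and Lemma \ref{lem:1314}" is entirely justified.
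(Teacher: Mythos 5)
Your overall route is exactly the paper's: the proof there is the one-liner ``By Proposition \ref{prop:SinglociSigma} and Lemma \ref{lem:1314}'', i.e.\ transport $\Sing\Sigma^{14}_{\mA}$ across the isomorphism $\Sigma^{14}_{\mA}\cap\{s_{33}\neq 0\}\cong\Sigma^{13}_{\mA}\times(\mA^1)^{*}$ and slice at $s_{33}=1$, which is what your final paragraph does correctly.

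However, the point you single out as ``the only genuinely delicate point'' rests on a misreading of the notation, and you assert the false statement twice without ever retracting it. In this paper $\mA(p_4,S)$ denotes the coordinate subspace on which $p_4$ and the entries of $S$ are the \emph{free} coordinates and all other coordinates vanish; it is emphatically not contained in $\{s_{33}=0\}$ --- on the contrary, $s_{33}$ is unconstrained there, so $\mA(p_4,S)\cap\{s_{33}\neq 0\}$ is a dense open subset of $\mA(p_4,S)$, and under the scaling isomorphism of Lemma \ref{lem:1314} it is carried to $\mA(p_4,S')\times(\mA^1)^{*}$ just like the other two components. So your opening claim that $\mA(p_4,S)\cap\{s_{33}\neq 0\}=\varnothing$, and the closing remark ``since it lies in $\{s_{33}=0\}$,'' are both wrong; if they were true, $\mA(p_4,S')=\mA(p_4,S)_{|s_{33}=1}$ would be empty and the proposition would fail. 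There is no delicacy here at all: each of the three components of $\Sing\Sigma^{14}_{\mA}$ meets $\{s_{33}\neq 0\}$ in a dense open subset, the isomorphism identifies that intersection with (component$'$)$\times(\mA^1)^{*}$, and each dimension drops by exactly one ($7\to 6$, $9\to 8$, $7\to 6$). With that correction your argument is complete and coincides with the intended proof.
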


Moreover,
by Lemma \ref{lem:1314} and Proposition \ref{prop:Tom14}, 
we obtain the following in the same way as we obtain Corollaries \ref{SigmaDiv} and \ref{QFanoDiv}:

\begin{prop}
\label{SigmaQFanoDiv13}
Let $\Sigma^{12}_{\mP}$ be the weighted projectivization of $\Sigma^{13}_{\mA}$ with some positive weights of coordinates.
It holds that
\begin{enumerate}[(1)]
\item any prime Weil divisor on $\Sigma^{12}_{\mP}$
is the intersection between $\Sigma^{12}_{\mP}$ and a weighted hypersurface.
In particular, $\Sigma^{12}_{\mP}$ is $\mQ$-factorial and has Picard number one, and
\item
let $X$ be a quasi-smooth threefold
such that $X$ is a codimension $9$ weighted complete intersection
in $\Sigma^{12}_{\mP}$.
Assume moreover that $\{p_1=0\}\cap X$ is a prime divisor. 
Then any prime Weil divisor on $X$
is the intersection between $X$ and a weighted hypersurface. In particular, $X$ is $\mQ$-factorial and has Picard number one.
\end{enumerate}
\end{prop}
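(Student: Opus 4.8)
The plan is to deduce Proposition \ref{SigmaQFanoDiv13} from the corresponding statements for $\Sigma^{14}_{\mA}$ by exploiting the product decomposition $\Sigma^{14}_{\mA}\cap\{s_{33}\neq 0\}\cong \Sigma^{13}_{\mA}\times(\mA^1)^{*}$ of Lemma \ref{lem:1314}, mirroring how Corollaries \ref{SigmaDiv} and \ref{QFanoDiv} were obtained from Proposition \ref{prop:Tom14}. The key observation is that to run the proof of Corollary \ref{SigmaDiv} verbatim, all I really need is that the affine coordinate ring $R'_\Sigma$ of $\Sigma^{13}_{\mA}$ is a UFD; then every height-one homogeneous prime ideal is principal by \cite[Thm.~20.1]{Matsumura}, which immediately gives part (1). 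So the first step is to establish that $R'_\Sigma$ is a UFD.

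For that I would argue as follows. Let $R_\Sigma$ be the coordinate ring of $\Sigma^{14}_{\mA}$, which is a UFD by Proposition \ref{prop:Tom14} (4), and $s_{33}\in R_\Sigma$ is a prime element: indeed $R_\Sigma/(s_{33})$ is the coordinate ring of $\Sigma^{14}_{\mA}\cap\{s_{33}=0\}$, and one checks it is a domain, e.g.\ by noting $\Sigma^{14}_{\mA}\cap\{s_{33}=0\}$ is an irreducible (normal) ample divisor on $\Sigma^{13}_{\mP}$ exactly as in the proof of Proposition \ref{prop:Tom14} (4) for $p_1$ — alternatively via the explicit chart descriptions. Hence $(R_\Sigma)_{s_{33}}$ is a UFD (a localization of a UFD). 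By Lemma \ref{lem:1314}, $(R_\Sigma)_{s_{33}}\cong R'_\Sigma[s_{33},s_{33}^{-1}]$, a Laurent polynomial ring over $R'_\Sigma$. A Laurent polynomial ring $A[x,x^{-1}]$ over a Noetherian domain $A$ is a UFD if and only if $A$ is a UFD (for the relevant direction, $A\hookrightarrow A[x,x^{-1}]$ and $A$ is a localization-stable factorial subring; more directly, $A[x]$ is a UFD iff $A$ is, by Gauss, and inverting the prime $x$ preserves this). Therefore $R'_\Sigma$ is a UFD, which proves (1) as above; note also that $\Sigma^{12}_{\mP}$ is irreducible and normal since $\Sigma^{13}_{\mA}$ is (being an open subscheme of $\Sigma^{13}_{\mA}\times(\mA^1)^{*}\cong\Sigma^{14}_{\mA}\cap\{s_{33}\neq0\}$, which is normal by Proposition \ref{prop:Tom14} (3)), and $\Sigma^{12}_{\mP}$ is its geometric $\mC^{*}$-quotient.

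For part (2) I would copy the proof of Corollary \ref{QFanoDiv} line by line, replacing $\Sigma^{13}_{\mP}$ by $\Sigma^{12}_{\mP}$, $\Sigma^{14}_{\mA}$ by $\Sigma^{13}_{\mA}$, and "codimension $10$" by "codimension $9$". Concretely: set $R_X:=R'_\Sigma/(F_1,\dots,F_9)$; by \cite[Prop.~7.4]{UFD} it suffices to show the localization $R_{X,o}$ at the irrelevant maximal ideal is a UFD; apply Nagata's theorem, for which one needs $p_1$ prime in $R_{X,o}$ (this is the hypothesis that $\{p_1=0\}\cap X$ is a prime divisor, together with normality of $\Sigma^{12}_{\mP}$ so that the divisor is connected as in Proposition \ref{prop:Tom14} (4)) and $(R_{X,o})_{p_1}$ a UFD. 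The latter holds because the $p_1$-chart of $\Sigma^{13}_{\mA}$ is a localization of a polynomial ring — this is exactly the second assertion of Lemma \ref{lem:1314}, which says the $p_1$-chart of $\Sigma^{13}_{\mA}$ is the $s_{33}=1$ slice of the (polynomial-localization) $p_1$-chart of $\Sigma^{14}_{\mA}$ — so $(R_{X,o})_{p_1}$ is a localization of a complete intersection ring of dimension four, and is a UFD by \cite[Cor.~3.10 and Thm.~3.13]{sga2} since $X$ is quasi-smooth.

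The only genuinely new point beyond citing earlier results is the descent of the UFD property from $R_\Sigma$ to $R'_\Sigma$ through the Laurent polynomial extension; everything else is a transcription of the $\Sigma^{14}_{\mA}$ arguments. I expect the main (minor) obstacle to be verifying cleanly that $s_{33}$ is a prime element of $R_\Sigma$ — equivalently that $\Sigma^{14}_{\mA}\cap\{s_{33}=0\}$ is integral — but this follows by the same ample-divisor-on-$\Sigma^{13}_{\mP}$ argument already used for $p_1$, or can be seen directly from the chart descriptions in the subsection \ref{Charts}.
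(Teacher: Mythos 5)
Your proposal is correct and follows essentially the route the paper intends: the paper derives Proposition \ref{SigmaQFanoDiv13} in one line from Lemma \ref{lem:1314} and Proposition \ref{prop:Tom14} ``in the same way as'' Corollaries \ref{SigmaDiv} and \ref{QFanoDiv}, and you have simply made explicit the one genuinely new step, namely descending the UFD property from $(R_{\Sigma})_{s_{33}}\cong R'_{\Sigma}[s_{33},s_{33}^{-1}]$ to $R'_{\Sigma}$. (Note only that primality of $s_{33}$ in $R_{\Sigma}$ is not actually needed for your argument, since localization at any multiplicative set preserves the UFD property.)
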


\section{\bf Searching weights}
\label{sec:weight}
\subsection{Weights for variables and equations}
\label{sub:Weights}
We assign weights for variables of the polynomial ring $S_{\Sigma}$ such that all the equations as in (\ref{Key1}) and (\ref{Key2}) are homogeneous.
Moreover, we assume that all the variables are not zero allowing some of them are constants (note that we have $s_{33}=1$ for the weighted projectivization $\SigmaP$ of $\Sigma^{13}_{\mA}$ with some weights of coordinates).
Then it is easy to derive the following relations between the weights of variables:

\vspace{3pt}

\noindent {\bf Relations between the weights of variables:}

The first equation of (\ref{Key1}) implies
\begin{equation}
w(p_i)+w(q_i) \ \text{are independent of $i$},
\end{equation}
which we denote by $d_0$.

The second matrix equation of (\ref{Key1}) implies
\begin{equation}
\label{eq:wsij}
\begin{cases}
w(s_{12})=w(r)-w(q_3),\\
w(s_{13})=w(r)-w(q_2),\\
w(s_{23})=w(r)-w(q_1),
\end{cases}
\end{equation}
and
\begin{equation}
\label{eq:wsii}
\begin{cases}
w(s_{11})=w(r)+w(p_2)-w(p_1)-w(q_3),\\
w(s_{22})=w(r)+w(p_3)-w(p_2)-w(q_1),\\
w(s_{33})=w(r)+w(p_1)-w(p_3)-w(q_2).
\end{cases}
\end{equation}

The fourth entry of (\ref{Key2}) implies 
\begin{equation}
\label{w(p4)}
w(p_4)=2w(r)-w(u).
\end{equation}

The second matrix equation of (\ref{Key1}) and the equality (\ref{w(p4)}) imply
\begin{equation}
\label{eq:wti}
\begin{cases}
w(t_1)=w(u)+w(p_2)-w(r)-w(q_3),\\
w(t_2)=w(u)+w(p_3)-w(r)-w(q_1),\\
w(t_3)=w(u)+w(p_1)-w(r)-w(q_2).
\end{cases}
\end{equation}

We may check that the above equalities of the weights of the variables generate all the relations of the weights of the variables. 

By symmetry, we may also assume that 

\begin{equation}
w(p_1)\leq w(p_2)\leq w(p_3).
\end{equation}

We list up in order the weights of the equations as in (\ref{Key1}) and (\ref{Key2}) :

\vspace{3pt}

\noindent {\bf Weights of equations:}
\begin{align}
&d_0:=w(p_1)+w(q_1) (=w(p_2)+w(q_2)=w(p_3)+w(q_3)),\label{d0}\\
&d_1:=w(r)+w(p_1),\, d_2:=w(r)+w(p_2),\, d_3:=w(r)+w(p_3),\label{d1}\\
&d_4:=-d_0+w(p_1p_2p_3)+w(r),\label{d4}\\
&d_5:=w(u)+w(p_1),\, d_6:=w(u)+w(p_2),\, d_7:=w(u)+w(p_3),\label{d5}\\
&d_8:=2w(r).\label{d8}
\end{align}

We also set
\begin{equation}
\label{eq:delta0}
\delta:=2w(r)+w(u)+w(p_1 p_2 p_3).
\end{equation}
Then we observe that the summation of all the weights of the equations is equal to 
\begin{equation}
\label{eq:delta}
\sum_{i=0}^{8} {d_i}=3\delta.
\end{equation}

\subsection{Possibilities of weights}
\label{sub:posweight}

Hereafter we fix a number $*$ as in the database \cite{grdb} for a prime $\mQ$-Fano threefold of codimension four. Then, in many cases, we can read off the weights of the nine equations for a prime $\mQ$-Fano threefold belonging to No.$*$. For example, we can do for No.360. If we cannot, then we can find a possibility of the weights of the nine equations in the following way:
Recall that for each No.$*$ as in Theorem \ref{thm:main} except No.360, a prime $\mQ$-Fano threefold belonging to this class is expected to have a Type I projection by \cite{grdb}. Then as a data for No.$*$, the type of the center of this Type I projection is given. We denote by $1/v (1,a,v-a)$ the type of the center of the projection.
Assume that the prime Fano threefold $X_{\rm{Tom}}$ of codimension three which is the target of the projection belongs to one of $69$ families as in \cite{Al} (see \cite{grdb} for cascades of projections). Then the set of the weights for the five equations of $X$ is given as a data for No.$*$.
By the definition of Type I projection (Definition \ref{defn:typeI}), the set of the weights of the coordinates of the ambient $7$-dimensional weighted projective space $\mP_{X_{\rm{Tom}}}$ for $X_{\rm{Tom}}$ contains the three values $1,a,v-a$. Let $w_1,\dots,w_4$ be the remaining four weights of the coordinates of $\mP_{X_{\rm{Tom}}}$. Then $v+w_1,\dots, v+w_4$ coincides with the weights for the four remaining equations of $X$.

Therefore, in any case, we are supposed to be given the set $\{d_0, d_1,\dots,d_8\}$ of the weights of the nine equations as in (\ref{Key1}) and (\ref{Key2}). Then we have the following program to obtain possibilities of weights of the variables of $S_{\Sigma}$ based on the subsection \ref{sub:Weights}. Note that we do not exclude possibilities of negative weights of coordinates in the program.

\vspace{3pt}

\noindent{\bf Program to restrict possibilities of weights}
\begin{itemize}
\item
Input the set ${\sf S}:=\{d_0, d_1,\dots,d_8\}$, which is given as a data for No.$*$.
\item
Set $\delta:=1/3 \sum_{i=0}^{8} d_i$ (cf. (\ref{eq:delta})).
\item
Choose an even element of ${\sf S}$ and set it as $d_8$. Set $w(r):=d_8/2$ (cf. (\ref{d8})).
\item
Choose an element of ${\sf S}$ except $d_8$ and set it as $d_0$. 
\item
Moreover, choose an element of ${\sf S}$ except $d_0,d_8$ and set it as $d_4$.
\item
Set $w(u):=\delta-w(r)-d_0-d_4$ (cf. (\ref{d4}) and (\ref{eq:delta0})).
\item
If the set ${\sf S}\setminus \{d_0,d_4,d_8\}$ can be decompose as $\{d_1,d_2,d_3\}\sqcup \{d_5,d_6,d_7\}$
such that $d_1\leq d_2\leq d_3$, $d_5\leq d_6\leq d_7$, and $d_1-d_5=d_2-d_6=d_3-d_7=w(r)-w(u)$ (cf. (\ref{d1}) and (\ref{d5})),
then output the following data:

$w(r)$, $w(u)$, $\delta$, $\begin{spmatrix} w(p_1) \\ w(p_2) \\ w(p_3) \end{spmatrix}$ defined by (\ref{d1}),
$\begin{spmatrix} w(q_1) \\ w(q_2) \\ w(q_3) \end{spmatrix}$ defined by (\ref{d0}),
$\begin{spmatrix} w(s_{11}) & w(s_{12}) & w(s_{13}) \\
 & w(s_{22}) & w(s_{23}) \\
 & & w(s_{33}) \end{spmatrix}$ defined by (\ref{eq:wsij}) and (\ref{eq:wsii}), and
$\begin{spmatrix} w(t_1) \\ w(t_2) \\ w(t_3) \end{spmatrix}$ defined by (\ref{eq:wti}).
\end{itemize}

Among several solutions, we choose those as in Table \ref{Table 1}.
\begin{rem}
We have also searched weights of variables for the classes of prime $\mQ$-Fano threefolds of codimension four in \cite{grdb} other than those as in Theorem \ref{thm:main}. Although we do not claim we are exhaustive, there are likely no other appropriate weights except as in Table \ref{Table 1}.
\end{rem}

\subsection{Graded $9\times 16$ free resolution}\label{sub:916}
It is easy to obtain the following proposition from the equations in (\ref{Key1}) and (\ref{Key2}): 
\begin{prop}
\label{prop:916}
The following assertions hold:
\begin{enumerate}[(1)]
\item The affine coordinate ring $R_{\Sigma}$ of $\Sigma_{\mA}^{14}$ has the following so-called $9\times 16$ $S_{\Sigma}$-free resolution which is graded with respect to the weights given as in the subsection \ref{sub:Weights}:
\begin{equation}
\label{eq:freeresol}
0\leftarrow R_{\Sigma} \leftarrow P_0 \leftarrow P_1\leftarrow P_2\leftarrow P_3\leftarrow P_4\leftarrow 0,
\end{equation}
where
\begin{align*}
P_0=&S_{\Sigma},\, P_1=\oplus_{i=0}^{8} S_{\Sigma}(-d_i),\\
P_2=&S_{\Sigma}(-(w(r)+d_0))\oplus S_{\Sigma}(-(w(u)+d_0))\oplus\\ 
&S_{\Sigma}(-w(rup_1))\oplus S_{\Sigma}(-w(rup_2))\oplus S_{\Sigma}(-w(rup_3))\oplus\\
&S_{\Sigma}(-w(r^2p_1))\oplus S_{\Sigma}(-w(r^2p_2))\oplus S_{\Sigma}(-w(r^2p_3))\oplus\\
&S_{\Sigma}(-\delta+w(r^2p_1))\oplus S_{\Sigma}(-\delta+w(r^2p_2))\oplus S_{\Sigma}(-\delta+w(r^2p_3))\oplus\\
&S_{\Sigma}(-\delta+w(rup_1))\oplus S_{\Sigma}(-\delta+w(rup_2))\oplus S_{\Sigma}(-\delta+w(rup_3))\oplus\\&S_{\Sigma}(-\delta+w(r)+d_0)\oplus S_{\Sigma}(-\delta+w(u)+d_0),\\
P_3=&\oplus_{i=0}^{8} S_{\Sigma}(d_i-\delta),\,P_4=S_{\Sigma}(-\delta).
\end{align*}

Moreover we have
\begin{equation}
\label{eq:KSigma}
\text{$K_{\Sigma_{\mP}^{13}}=\sO(-k)$ with $k=3\delta-2w(r)-6d_0-w(u)$.}
\end{equation}
\item
The same statements hold as in (1) for $\Sigma^{12}_{\mP}$ and for the ideal of $\Sigma_{\mA}^{13}$ in the polynomial ring obtained from $S_{\Sigma}$ by setting $s_{33}=1$ (hence $w(s_{33})=1$).
\end{enumerate}

\end{prop}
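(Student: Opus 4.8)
The plan is to recognize the complex (\ref{eq:freeresol}) as the Kustin--Miller resolution attached to the Tom unprojection by which $\Sigma^{14}_{\mA}$ was constructed in the subsection \ref{sub:Tom14}, and then to extract the graded twists and the canonical class from that description. All of the inputs are already in place: the $5\times 5$ Pfaffian resolution $C_{\bullet}$ of $I_{\rm{Tom}}$ (Betti numbers $1,5,5,1$), the Koszul resolution $K_{\bullet}$ of the regular sequence $p_1,\dots,p_4$ (Betti numbers $1,4,6,4,1$), and the comparison map $C_{\bullet}\to K_{\bullet}$ induced by $\varphi\colon p_i\mapsto F_i$, which in the subsection \ref{sub:Tom14} was identified with the specialization of the generic Tom comparison map of \cite[\S 5]{WithC}. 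First I would feed these into the explicit unprojection construction (\cite{WithoutC}, \cite{PHD}, \cite[\S 3, \S 5]{WithC}), which produces a graded $S_{\Sigma}$-free resolution of $R_{\Sigma}$ of length four whose exactness is part of the Kustin--Miller statement; since we already know from Proposition \ref{prop:Tom14} that $R_{\Sigma}$ is Gorenstein of codimension four, exactness can alternatively be verified by the Buchsbaum--Eisenbud acyclicity criterion once the differentials, read off from (\ref{Key1}), (\ref{Key2}) and the skew matrix (\ref{eq:Pf}), are written down. Counting ranks term by term: $P_1$ carries the five Pfaffian generators of $I_{\rm{Tom}}$ together with the four unprojection relations $u\bm{p}-(rI-AS)A\bm{t}$ and $up_4-F_4$, so it has rank $5+4=9$; $P_2$ is assembled from the five Pfaffian first syzygies, the six Koszul relations among $p_1,\dots,p_4$, and five further syzygies coming from the comparison map, so it has rank $16$; and $P_3$, $P_4$ then have ranks $9$, $1$.

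Next I would pin down the graded twists by self-duality. Because $R_{\Sigma}$ is Gorenstein of codimension four, its minimal graded free resolution is self-dual, $P_i\cong P_{4-i}^{\vee}(-\delta)$ for a single $\delta$. The nine equations of (\ref{Key1}), (\ref{Key2}) minimally generate $I_{\Sigma}$ — the five Pfaffians minimally generate $I_{\rm{Tom}}$ and the unprojection adjoins the four remaining relations as new minimal generators (for the strictly positive weighting of No.$24078$ this is also visible on leading forms) — so $P_1=\bigoplus_{i=0}^{8}S_{\Sigma}(-d_i)$, and dually $P_3=\bigoplus_{i=0}^{8}S_{\Sigma}(d_i-\delta)$, $P_4=S_{\Sigma}(-\delta)$. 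The remaining unknowns, namely $\delta$ and the sixteen twists in $P_2$, are forced by transporting the graded twists of $C_{\bullet}$ and $K_{\bullet}$ through the construction (equivalently, by comparing the graded Euler characteristic of (\ref{eq:freeresol}) with the Hilbert series of $R_{\Sigma}$): one gets $\delta=2w(r)+w(u)+w(p_1p_2p_3)$, which is precisely (\ref{eq:delta0}), and recovers the displayed list of twists for $P_2$. As a consistency check one verifies that these sixteen twists fall into eight pairs summing to $\delta$, in accordance with $P_2\cong P_2^{\vee}(-\delta)$.

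For the canonical class, from $P_4=S_{\Sigma}(-\delta)$ and $\omega_{S_{\Sigma}}=S_{\Sigma}(-N)$, where $N$ is the sum of the weights of the eighteen variables of $S_{\Sigma}$, the resolution yields $\omega_{R_{\Sigma}}=\Ext^{4}_{S_{\Sigma}}(R_{\Sigma},\omega_{S_{\Sigma}})\cong R_{\Sigma}(\delta-N)$. Summing the variable weights with the relations (\ref{eq:wsij}), (\ref{eq:wsii}), (\ref{w(p4)}), (\ref{eq:wti}) of the subsection \ref{sub:Weights}, together with $w(p_i)+w(q_i)=d_0$, gives $N=6w(r)+3w(u)+4w(p_1p_2p_3)-6d_0$, hence $\delta-N=-(3\delta-2w(r)-6d_0-w(u))=-k$; since $\Sigma^{13}_{\mP}$ is well-formed, its canonical sheaf is $\widetilde{\omega_{R_{\Sigma}}}=\sO(-k)$, which is (\ref{eq:KSigma}). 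Finally, part (2) I would deduce from Lemma \ref{lem:1314}: the localization $(R_{\Sigma})_{s_{33}}$ is the affine coordinate ring of $\Sigma^{13}_{\mA}$ tensored with $\mC[s_{33}^{\pm1}]$, so localizing (\ref{eq:freeresol}) at $s_{33}$ and descending along this flat base change gives a $9\times 16$ resolution of the coordinate ring of $\Sigma^{13}_{\mA}$ over the seventeen-variable polynomial ring, with the same Betti numbers $1,9,16,9,1$ and with twists adjusted by the rescaling in that lemma; sheafifying yields the assertions for $\Sigma^{12}_{\mP}$ and $\Sigma^{13}_{\mP}$.

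I expect the only genuine work to sit in the first two steps — the degree bookkeeping, i.e.\ organizing the sixteen first syzygies through the comparison map and confirming that their twists, and those of $P_3$ and $P_4$, come out exactly as displayed. Everything after that is formal: self-duality forces $P_3$ and $P_4$ from $P_1$ and $P_0$, the shift $\delta$ equals (\ref{eq:delta0}), and the canonical-divisor formula (\ref{eq:KSigma}) is a one-line computation with the weight relations of the subsection \ref{sub:Weights}.
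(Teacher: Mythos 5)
Your proposal is correct and is essentially the route the paper intends: the paper offers no written argument beyond ``it is easy to obtain\dots from the equations,'' but the $9\times16$ resolution is exactly the Kustin--Miller resolution attached to the Tom unprojection already set up in the subsection \ref{sub:Tom14}, and your rank count $5+6+5=16$, the pairing of the sixteen twists into eight pairs summing to $\delta$, and the computation $N=6w(r)+3w(u)+4w(p_1p_2p_3)-6d_0$ giving $\delta-N=-k$ all check out against the weight relations of the subsection \ref{sub:Weights}. The only detail worth recording is the explicit identification of the sixteen twists (the five Pfaffian syzygies contribute $w(r)+d_0$, $\delta-w(u)-d_0$, $\delta-w(rup_i)$, the six Koszul syzygies contribute $w(r^2p_i)$ and $\delta-w(r^2p_i)$, and the remaining five are the duals of the Pfaffian ones), which is precisely the bookkeeping you flag as the genuine work.
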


Now we prove a part of Theorem \ref{thm:main}.
Note that we can read off the Hilbert numerator of a weighted complete intersection of $\Sigma^{12}_{\mP}$ or $\Sigma^{13}_{\mP}$
from the free resolution (\ref{eq:freeresol}) (cf. \cite[Lem.4.1.13]{BH}, \cite[Rem.~3.6]{kino}). 
By (\ref{eq:KSigma}) as in Proposition \ref{prop:916}, we can immediately compute the canonical divisor of a weighted complete intersection of $\Sigma^{12}_{\mP}$ or $\Sigma^{13}_{\mP}$ respectively. Therefore we obtain the following assertion:

\begin{cor}
\label{cor:candivdeg}
For any class in Table \ref{Table 1}, we choose the weights of coordinates as in there
and a weighted complete intersection $X$ of $\Sigma^{12}_{\mP}$ or $\Sigma^{13}_{\mP}$ as in Table \ref{Table 2}. Then $K_X$ is equal to $\sO_X(-1)$, and the genus of $X$ and  the Hilbert numerator of $X$ with respect to $-K_X$ coincide with those given in \cite{grdb}.
\end{cor}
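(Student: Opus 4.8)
The goal is to prove Corollary~\ref{cor:candivdeg}: that for each class in Table~\ref{Table 1}, with the chosen weights and the weighted complete intersection $X$ described in Table~\ref{Table 2}, one has $K_X = \sO_X(-1)$, and that the genus $g(X)$ and Hilbert numerator of $X$ agree with the data in \cite{grdb}. The plan is to extract everything from the graded $9\times 16$ free resolution of Proposition~\ref{prop:916} together with the adjunction-type formula \eqref{eq:KSigma}, and then to perform, class by class, a finite bookkeeping computation.

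First I would address the canonical divisor. By \eqref{eq:KSigma} we have $K_{\Sigma^{13}_{\mP}} = \sO(-k)$ with $k = 3\delta - 2w(r) - 6d_0 - w(u)$, and the analogous statement holds for $\Sigma^{12}_{\mP}$ by Proposition~\ref{prop:916}(2). Since $X$ is a weighted complete intersection of hypersurfaces of degrees $e_1,\dots,e_m$ in $\Sigma$ (with $m=9$ or $10$ according to the table), the adjunction formula gives $K_X = \sO_X(-k + \sum_j e_j)$. So the first step is purely arithmetic: for each row of Tables~\ref{Table 1} and \ref{Table 2}, compute $\delta$, $w(r)$, $d_0$, $w(u)$ from the weight assignments via \eqref{d0}--\eqref{eq:delta0}, hence $k$, read off the degrees $e_j$ of the cutting hypersurfaces from the fourth column of Table~\ref{Table 2}, and verify $k = \sum_j e_j + 1$. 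This is a routine check and I would simply tabulate it (or note that the searching procedure of \S\ref{sub:posweight} was designed precisely so that $-K_X = \sO_X(1)$, making the identity automatic for the solutions retained in Table~\ref{Table 1}).

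Next, for the Hilbert numerator: from the graded free resolution \eqref{eq:freeresol} of $R_\Sigma$ over $S_\Sigma$, the Hilbert series of $R_\Sigma$ is the alternating sum $\sum_{i=0}^{4}(-1)^i \sum_j t^{a_{ij}}$ divided by $\prod (1 - t^{w_\ell})$, where the $a_{ij}$ are the shifts in $P_i$ and the $w_\ell$ run over the weights of all $18$ coordinates (resp.\ $17$ for $\Sigma^{13}_{\mA}$). Cutting by a weighted hypersurface of degree $e$ multiplies the numerator by $(1-t^e)$; since $X$ is a weighted complete intersection of the $F_j$ inside $\Sigma$, the numerator of the Hilbert series of $R_X$ is the numerator coming from \eqref{eq:freeresol} times $\prod_j (1 - t^{e_j})$, and the denominator is $\prod_\ell (1-t^{w_\ell})$ times $\prod_{i}(1-t^{b_i})$ over the weights $b_i$ of the ambient $\mP_X$. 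Dividing out the $(1-t^{w_\ell})$ factors against the corresponding $(1-t^{b_i})$ (the coordinates of $\mP_X$ are exactly the nonconstant coordinates of $\Sigma$), one is left with the Hilbert numerator of $X$ with respect to $-K_X = \sO_X(1)$. The last step is then to compare this polynomial, class by class, with the Hilbert numerator recorded in \cite{grdb}; equivalently, expanding to low order recovers $h^0(-K_X)$ and hence $g(X) = h^0(-K_X) - 2$, which must match the database value.

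The main obstacle is not conceptual but organizational: one must verify, for all $24$ rows simultaneously, that the weights in Table~\ref{Table 1} really are homogeneous for all nine equations \eqref{Key1}--\eqref{Key2} (so that the graded resolution \eqref{eq:freeresol} specializes correctly), that the cutting degrees in Table~\ref{Table 2} are consistent with the weights, and that the resulting numerator polynomial matches \cite{grdb} exactly, including the correct basket and hence the correct (finite) degree $(-K_X)^3$. I expect the cleanest write-up is to prove the two general formulas — the adjunction identity for $K_X$ and the product formula for the Hilbert numerator of a weighted complete intersection from the $9\times 16$ resolution (citing \cite[Lem.~4.1.13]{BH} and \cite[Rem.~3.6]{kino}) — in full generality once, and then to assert that the per-class verification is a direct finite computation, exhibiting one representative case (say No.~360 or No.~24078) in detail and leaving the remaining $23$ to the reader or to an auxiliary table. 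The quasi-smoothness hypothesis, needed to know that $X$ is genuinely a complete intersection of the expected codimension in $\Sigma$ (so that the $(1-t^{e_j})$ factors enter without cancellation and $X$ is $\mQ$-Gorenstein with $K_X$ given by adjunction), is the one input that is not purely formal; it will be established separately in \S\ref{sec:Example}, and here I would simply invoke it.
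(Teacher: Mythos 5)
Your proposal is correct and follows essentially the same route as the paper: the paper likewise derives $K_X$ from the formula \eqref{eq:KSigma} of Proposition \ref{prop:916} via adjunction, and reads off the Hilbert numerator of the weighted complete intersection from the graded $9\times 16$ resolution \eqref{eq:freeresol} (citing the same references \cite[Lem.~4.1.13]{BH} and \cite[Rem.~3.6]{kino}), leaving the per-class comparison with \cite{grdb} as a finite computation. Your write-up simply makes the bookkeeping more explicit than the paper's two-line justification.
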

Moreover, we have the following:
\begin{cor}
\label{cor:ac4}
Let $X$ be as in Corollary \ref{cor:candivdeg}. Assume that $X$ is quasi-smooth, has Picard number one, and has cyclic quotient singularities assigned for the class of $X$. Then ${\rm{ac}}_X=4$.
\end{cor}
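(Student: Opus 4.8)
The plan is to show that the anti-canonical graded ring $R(X,-K_X)$ of $X$ is minimally generated by exactly the coordinates of the ambient weighted projective space $\mP_X$ listed in Table \ref{Table 2}, and that there are no further relations forcing $X$ to lie in a lower-codimensional weighted projective space; combined with Corollary \ref{cor:candivdeg} (which gives $K_X=\sO_X(-1)$, the genus, and the Hilbert numerator), this pins down the anti-canonical codimension to be $4$. First I would recall that by Corollary \ref{cor:candivdeg} the Hilbert series of $R(X,-K_X)$ equals the Hilbert series recorded in \cite{grdb} for the class of $X$, computed from the Hilbert numerator and the weights of $\mP_X$; in particular the dimension of $\mP_X$ and the codimension of $X$ inside it are exactly those predicted in the database, which is $4$ by construction (Table \ref{Table 2}). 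So the only thing to verify is that this ambient $\mP_X$ is genuinely the \emph{minimal} one determined by the anti-canonical ring, i.e.\ that $R(X,-K_X)$ cannot be generated in fewer variables.

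The key step is to compute the initial terms of the minimal free resolution of $R(X,-K_X)$, equivalently its graded Betti numbers in low degrees, and read off the minimal number of generators $\dim_{\mC} (\mathfrak{m}/\mathfrak{m}^2)_n$ in each degree $n$. For this I would use the $9\times 16$ free resolution of $R_\Sigma$ from Proposition \ref{prop:916} together with the Koszul complex on the ten (resp.\ nine) weighted forms $F_1,\dots,F_{10}$ cutting $X$ out of $\Sigma^{13}_{\mP}$ (resp.\ $\Sigma^{12}_{\mP}$): since $X$ is quasi-smooth of the expected dimension, these forms constitute a regular sequence on the (Cohen--Macaulay) coordinate ring of the ambient key variety, so the resolution of $R_X$ is obtained from the tensor product of the $9\times 16$ resolution with the Koszul complex, and from this one extracts $P_1$ for $X$. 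The weights appearing there are precisely $d_0,\dots,d_8$ together with the weights of the $F_i$, and by the weight computations of Subsection \ref{sub:Weights} and the tables these match the weights of the $10$ (resp.\ $9$) extra equations and of the ambient coordinates; hence the number of degree-one-in-$-K_X$ generators of $R_X$ equals $\dim \mP_X + 1$, no generator is redundant, and the minimal embedding has codimension $4$. The hypotheses that $X$ is quasi-smooth with exactly the assigned cyclic quotient singularities guarantee that the Hilbert-series identity of Corollary \ref{cor:candivdeg} is the one for the correct singular model, and that no accidental vanishing collapses the generator count; the Picard-number-one hypothesis ensures $-K_X$ generates the relevant class group so that "the" anti-canonical ring and its codimension are well-defined invariants ${\rm ac}_X$ in the first place.

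The main obstacle I expect is the bookkeeping verifying that the generator degrees extracted from the tensored resolution really coincide with the coordinate weights of $\mP_X$ in Table \ref{Table 1}, and in particular ruling out any cancellation between a generator coming from $P_1$ of the key variety and a Koszul generator coming from some $F_i$ of the same degree — such a cancellation would be exactly what could drop the codimension below $4$. This is handled case by case: for each of the $24$ classes one checks against the database entry, using that the Hilbert numerator already agrees (Corollary \ref{cor:candivdeg}), so any spurious generator/relation pair would have to appear in matching degrees and would be detectable in the Hilbert series; since none does, the resolution above is already minimal in degree $\le 1$ (with respect to $-K_X$) and ${\rm ac}_X=4$.
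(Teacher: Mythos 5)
Your proposal takes a genuinely different route from the paper, but as written it has a gap at exactly the point you flag as the main obstacle. You want to prove directly that $R(X,-K_X)$ is minimally generated by the eight coordinates of $\mP_X$, and you propose to rule out a redundant generator (a generator of $I_X$ with a nonzero linear term in some ambient coordinate, cancelling against that coordinate) by saying that such a generator/relation pair in matching degrees ``would be detectable in the Hilbert series; since none does\ldots''. This is backwards: a generator and a relation in the same degree cancel in the Hilbert numerator and are precisely \emph{invisible} to the Hilbert series. That is why Corollary \ref{cor:candivdeg} alone cannot certify minimality of the embedding, and why the resolution obtained by tensoring the $9\times 16$ complex of Proposition \ref{prop:916} with the Koszul complex of the cutting equations is only \emph{a} resolution, not automatically a minimal one. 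To close your argument you would have to check, class by class, that after substituting the section equations (\ref{sectioneq}) none of the nine resulting equations of $X$ acquires a linear term in a coordinate of $\mP_X$ (note that in several classes, e.g.\ No.24078, the equation degrees $d_0,\dots,d_8$ do overlap with coordinate weights, so this is not automatic). That verification is not supplied.

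The paper avoids this issue entirely. Its proof is: $X$ is realized in codimension $4$ in the weighted projective space obtained from $\mP_\Sigma$, so $\mathrm{ac}_X\le 4$; by Corollary \ref{cor:candivdeg} and the hypotheses (quasi-smoothness, the assigned basket, and Picard number one, the last being what makes $X$ \emph{prime}) $X$ is a quasi-smooth prime $\mQ$-Fano threefold with the numerical data of its class; and the classifications of quasi-smooth prime $\mQ$-Fano threefolds with $\mathrm{ac}_X\le 3$ (\cite{Fl}, \cite{CCC}, \cite{Al}) are complete and contain no member with that numerical data, so $\mathrm{ac}_X\ge 4$. If you want to keep your classification-free approach, replace the Hilbert-series step by an explicit check that no coordinate of $\mP_X$ can be eliminated; otherwise the appeal to the low-codimension classification is the cleaner way to obtain the lower bound $\mathrm{ac}_X\ge 4$.
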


\begin{proof}
Note that $\Sigma^{12}_{\mP}$ or $\Sigma^{13}_{\mP}$ corresponding to $X$ is of codimension $4$ in the ambient weighted projective space. Therefore so is $X$ in the ambient weighted projective space obtained from the ambient weighted projective space of the corresponding $\Sigma^{12}_{\mP}$ or $\Sigma^{13}_{\mP}$. By Corollary \ref{cor:candivdeg} and the assumption of this corollary, $X$ is a prime $\mQ$-Fano threefold. By the classification of prime $\mQ$-Fano threefolds with anti-canonical codimension $\leq 3$, the numerical data of $X$ does not coincide with that of any prime $\mQ$-Fano threefold with anti-canonical codimension $\leq 3$. This implies that ${\rm ac}_X=4$ as desired.  
\end{proof}

\begin{rem}
In Corollary \ref{cor:candivdeg}, the assumption that the Picard number of $X$ is one is crutial. In \cite[Thm.~1.2]{GKQ}, examples of two $\mQ$-Fano threefolds with the same numerical data and different codimensions--one has codimension 3 and another has codimension 4--are presented. The $\mQ$-Fano threefold of codimension 3 has Picard number one while that of codimension 4 has Picard number two.
\end{rem}

Let $X$ be as in Corollary \ref{cor:candivdeg}.
To complete the proof of Theorem \ref{thm:main} for $X$, it remains to show the following:
\begin{enumerate}
\item
$X$ is quasi-smooth and has only cyclic quotient singularities assigned for the class of $X$, and the Picard number of such an $X$ is one.
\item
$X$ has a projection described as in Theorem \ref{thm:main} (1).
\end{enumerate}

We will show (1) and Theorem \ref{thm:main} (2) in the section \ref{sec:Example} and will show (2) in the section \ref{sec:proj}.

\begin{rem}
It is a bit surprise that there are only two cases for $w(S)$;
\[
w(S)=\begin{spmatrix} 4 & 3 & 2\\ & 2 & 1 \\& &  0 \end{spmatrix}\ \text{or}\ \begin{spmatrix} 2 & 2 & 1\\ & 2 & 1 \\& &  0 \end{spmatrix}.
\]
In the former case, we have 
\begin{align*}
&w(\bm{p})=\begin{spmatrix} a \\ a+1 \\ a+2 \end{spmatrix},  
w(\bm{q})=\begin{spmatrix} b+2 \\ b+1 \\ b \end{spmatrix},
w(\bm{t})=\begin{spmatrix} c+2 \\ c+1 \\ c \end{spmatrix},\\
&w(p_4)=2-c+a,w(r)=b+3, w(u)=c+2b-a+4,\\
&K_{\Sigma^{12}_{\mP}}=\sO(-k)\ \text{with} \ k=a+2b+2c+17
\end{align*}    
for some $a,b,c\in \mN$ by the constraints of the weights of variables as in the subsection \ref{sub:Weights}.    

In the latter case, we have 
\begin{align*}
&w(\bm{p})=\begin{spmatrix} a \\ a \\ a+1 \end{spmatrix},  
w(\bm{q})=\begin{spmatrix} b+1 \\ b+1 \\ b \end{spmatrix},
w(\bm{t})=\begin{spmatrix} c+1 \\ c+1 \\ c \end{spmatrix},\\
&w(p_4)=1-c+a,w(r)=b+2, w(u)=c+2b-a+3,\\
&K_{\Sigma^{12}_{\mP}}=\sO(-k)\ \text{with} \ k=a+2b+2c+11
\end{align*}    
for some $a,b,c\in \mN$ by the constraints of the weights of variables as in the subsection \ref{sub:Weights}.    
\end{rem}

\section{\bf Proof of Theorem \ref{thm:main}}
\label{sec:Example}

In this section, we show Theorem \ref{thm:main} (1-1) and (2). 
Only for such $X$ and $T$ in case of No.24078, examples are constructed from $\Sigma^{14}_{\mA}$. Our construction for this case is similar to those in the other cases where examples are constructed from $\Sigma^{13}_{\mA}$. Therefore we only treat the cases exept No.24078 below.


\subsection{Strategy}
\label{sub:strategy}
In this subsection, we explain in detail our strategy how to verify a weighted complete intersection of $\Sigma^{12}_{\mP}$ in each case is a desired example of the prime $\mQ$-Fano threefold. In the subsections \ref{Case1}--\ref{Case3}, we carry out the verification following the strategy; in those subsections, we often omit the common explanation in each case with reference to explanation in this subsection.

\vspace{3pt}

Fix a class No.$*$ except No.24078 as in Table \ref{Table 1}.
Hereafter in this section, we denote $\SigmaP$ by $\Sigma$ and $\SigmaA$ by $\Sigma_{\mA}$ for simplicity of notation.
In each case, we set 
\begin{equation}
\label{eq:Xgen}
X=\Sigma\cap (a_1)^{m_1}\cap \dots \cap (a_k)^{m_k},
\end{equation}
where $(a_i)$ for each $i$ means a general weighted hypersurface of weight $a_i$, and $a_1,\dots,a_k$, $m_1,\dots,m_k\in \mN$ are assigned for No.$*$ as in Table \ref{Table 2}, and we may assume that they satisfy  
$a_1<\cdots <a_k$ and $m_1+\cdots+m_k=9$.

Let $T$ be the intersection of $X$ and a general hypersurface of weight one. We write $T$ as follows:
\begin{equation}
\label{eq:Tgen}
T=\Sigma\cap (b_1)^{n_1}\cap \dots \cap (b_l)^{n_l},
\end{equation}
where
\[
\begin{cases}
\text{$l=k$, $b_i=a_i$ for any $i\geq 1$, $n_1=m_1+1$, $n_i=m_i$ for any $i\geq 2$ if $a_1=1$,}\\
\text{$l=k+1$, $b_1=1$, $b_i=a_{i-1}$ for any $i\geq 2$, $n_1=1$, $n_i=m_{i-1}$ for any $i\geq 2$ if $a_1>1$}.
\end{cases}
\]

We use the following notation for intermediate varieties appearing in the process to obtain $T$ from $\Sigma$:
\[
\text{$\Sigma(b_i):=\Sigma\cap (b_1)^{n_1}\cap \dots \cap (b_i)^{n_i}$ for any $i$}.
\]
\vspace{3pt}

\noindent {\bf List of notation}:

We use the following notation for a weighted projective variety $V$ considered below (for example, $V=X,T,\Sigma(b_i)$):
\begin{itemize}
\item
$\mP_V:=$ the ambient weighted projective space for $V$. 
\item
$V_{\mA}:=$ the affine cone of $V$.
\item
$V_{\mA}^o:=$ the complement of the origin in $V_{\mA}$.
\item
$\mA_V:=$ the ambient affine space for $V_{\mA}$.
\item
$\mA_{V}^*:=$ the affine space obtained from $\mA_V$ by setting the coordinate $*=1$.
\item
$V^*_{\mA}:=$ the restriction of $V_{\mA}$ to $\mA_{V}^*$.
\item
$|\sO_{\mP_V}(i)|_{\mA}:=$ the set of the affine cones of members of $|\sO_{\mP_V}(i)|$.
\item
$\Bs |\sO_{\mP_V}(i)|_{\mA}:=$ the intersection of all the members of $|\sO_{\mP_V}(i)|_{\mA}$.
\end{itemize}

\vspace{3pt}

\noindent {\bf Equations of $X$ and $T$}:

In each case, it is important to observe that the number of the coordinates of $\mP_{\Sigma}$ with weight $a_i$ 
is greater than or equal to $m_i$ for any $i$ (we refer to Tables \ref{Table 1} and \ref{Table 2} for this).
Therefore, for $X$, we may choose each of the sections $(a_i)$ as defined by the equation of the form
\begin{align}
\label{sectioneq}
&\text{(a coordinate of $\mP_{\Sigma}$ with weight $a_i)=$}\\
&\quad \text{(a polynomial with weight $a_i$ of other coordinates)}.\nonumber
\end{align}
We refer for explicit forms of (\ref{sectioneq}) to the examples given below in the subsections \ref{Case1}--\ref{Case3}. 
Moreover, for $T$, we just add to the sections defining $X$
one general section of weight one as defined by the equation of the form (\ref{sectioneq}) since the number of the coordinates of $\mP_{\Sigma}$ with weight one
is greater than or equal to $n_1$.

Substituting the r.h.s.\,of the equations of the sections of the form (\ref{sectioneq}) for the corresponding variables of the nine equations of $\Sigma$ (\ref{Key1}) and (\ref{Key2}), we obtain the equations of $X$ and $T$ in their ambient weighted projective spaces $\mP_X$ and $\mP_T$ respectively.

As for writing down the equations of them, we perform the case division according to the value of $h^0(\sO_{\mP_X}(1))$; $h^0(\sO_{\mP_X}(1))\geq 3$ ($2$ classes: No.11004, 16227), $h^0(\sO_{\mP_X}(1))=2$ (7 classes: No.4850--6865), or $h^0(\sO_{\mP_X}(1))=1$ (14 classes: No.360--No.2422). We often perform this case division below.  
In the case that $h^0(\sO_{\mP_X}(1))\geq 3$, it turns out that we do not need explicit equations of $X$ and $T$.
In the case that $h^0(\sO_{\mP_X}(1))=1$, the equations of $X$ are complicated since they have many parameters. So we write down the equations of $T$ instead of those of $X$ since they are reasonably simple. Explicitly, we refer to the subsection \ref{Case1}.
In the case that $h^0(\sO_{\mP_X}(1))=2$, the full equations of $T$ are also complicated.
Hence we take two sections $T,T'$ of $X$ by general members of $|\sO_{\mP_X}(1)|$ and set $C:=T\cap T'$.
Then we give the full equations of $C$, which are reasonably simple.
Explicitly, we refer to the subsection \ref{Case2}.

\vspace{5pt}

\noindent {\bf Checkpoints (A), (B), (C)}:
For each case, we will check the following claims (A)--(C) for $X$ and $T$:
\begin{enumerate}[(A)]
\item $X$ and $T$ are quasi-smooth, namely, $X_{\mA}^o$ and $T_{\mA}^o$ are smooth.
\item $X$ and $T$ have only cyclic quotient singularities assigned for No.$*$.
\item $X\cap \{p_1=0\}$ is a prime divisor.
\end{enumerate}

By these claims, we can finish a part of the proof of Theorem \ref{thm:main}. Indeed,
by (A), (C) and Corollary \ref{QFanoDiv}, $X$ has Picard number one. By this fact, and (A), (B) and Corollaries \ref{cor:candivdeg} and \ref{cor:ac4},
we see that $X$ is an example of a $\mQ$-Fano threefold of No.$*$, which shows Theorem \ref{thm:main} (1-1). By  (A) and (B), $T$ is a quasi-smooth $K3$ surface of No.$*$ since $T\in |-K_X|$, which shows Theorem \ref{thm:main} (2).

\vspace{3pt}

\noindent {\bf Claims (A) and (B)}:
By the Bertini singularity theorem (cf.\cite{Kl}), we see that
$\Sing X_{\mA}^o$ is contained in $(\bigcup_i \Bs |\sO_{\mP_{\Sigma}}(a_i)|_{\mA})\cup \Sing {\SigmaA}^o$, and
$\Sing T_{\mA}^o$ is contained in $(\bigcup_i \Bs |\sO_{\mP_{\Sigma}}(b_i)|_{\mA})\cup \Sing {\SigmaA}^o$.
Note that the dimension of the singular locus of $\SigmaA$ is less than the codimension of $X$ in $\Sigma^{13}_{\mA}$ by Proposition \ref{prop:Sing13}. Therefore we see that 
$(\Sing {\SigmaA}^o)\setminus (\bigcup_i \Bs |\sO_{\mP_{\Sigma}}(a_i)|_{\mA})$ is disjoint from $X_{\mA}^o$, and $(\Sing {\SigmaA}^o)\setminus (\bigcup_i \Bs |\sO_{\mP_{\Sigma}}(b_i)|_{\mA})$ is disjoint from $T_{\mA}^o$
for general $X$ and $T$.
Therefore it holds that
\begin{equation}
\label{eq:SingXT}
\Sing X_{\mA}^o\subset \bigcup_i \Bs |\sO_{\mP_{\Sigma}}(a_i)|_{\mA}, \ \text{and}\
\Sing T_{\mA}^o\subset \bigcup_i \Bs |\sO_{\mP_{\Sigma}}(b_i)|_{\mA}.
\end{equation}

\vspace{3pt}

\noindent {\bf Reduction of (A) and (B) to $T$}:
We see that $\Bs |\sO_{\mP_{\Sigma}}(a_i)|_{\mA}, \Bs |\sO_{\mP_{\Sigma}}(b_i)|_{\mA}$ are contained in $\Bs |\sO_{\mP_{\Sigma}}(1)|_{\mA}$ for any $i$ since there exists a coordinate of $\mA_{\Sigma}$ with weight one in each case.
Therefore
$\Sing X_{\mA}^o$ and $\Sing T_{\mA}^o$ are contained in $\Bs |\sO_{\mP_{\Sigma}}(1)|_{\mA}$,
which implies that, to prove (A) for $X,T$, it suffices to show  
$X_{\mA}^o, T_{\mA}^o$ is smooth along $\Bs |\sO_{\mP_{\Sigma}}(1)|_{\mA}$.
Moreover,
we see that 
the smoothness of $T_{\mA}^o$ implies that of $X_{\mA}^o$
since $\Sing X_{\mA}^o\subset \Bs |\sO_{\mP_{\Sigma}}(1)|_{\mA}$ and $T_{\mA}^o$ is the intersection of a general hypersurface of weight one and $X_{\mA}^o$, and
hence  
$T_{\mA}^o$ is a Cartier divisor of $X_{\mA}^o$.
Further, if we show that $(B)$ holds for $T$, i.e., $T$ has the singularity of the desired type $1/\alpha\, (\beta,\alpha-\beta)$ at each point ${\sf t}$,
then we immediately see that $(B)$ holds for $X$, i.e.,
$X$ has the desired $1/\alpha (1, \beta,\alpha-\beta)$-singularity at ${\sf t}$ since $T=X\cap (1)$.

In summary, we have shown the following claim:
\begin{cla}
\label{cla:RedAB}
As for $(A)$ and $(B)$, it suffices to show them for $T$.
\end{cla}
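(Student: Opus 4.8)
The statement to prove is Claim~\ref{cla:RedAB}: that to verify the checkpoints (A) and (B) for both $X$ and $T$, it suffices to verify them for $T$ alone. The proof will be a careful bookkeeping argument chaining together the facts already assembled in the preceding part of the subsection, so my plan is mostly to identify the right chain and make the logical dependencies explicit rather than to introduce new geometry.

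First I would set up the containment for singular loci. By the Bertini singularity theorem, $\Sing X_{\mA}^o \subset \left(\bigcup_i \Bs |\sO_{\mP_{\Sigma}}(a_i)|_{\mA}\right)\cup \Sing \Sigma_{\mA}^o$ and similarly for $T$. Using that $\dim \Sing \Sigma^{13}_{\mA}$ is strictly less than the codimension of $X$ in $\Sigma^{13}_{\mA}$ (Proposition~\ref{prop:Sing13}), a general choice of the defining sections moves $X_{\mA}^o$ and $T_{\mA}^o$ off the part of $\Sing\Sigma_{\mA}^o$ not already forced into the base loci, giving (\ref{eq:SingXT}). Then, since in every case there is a coordinate of $\mA_{\Sigma}$ of weight one, each $\Bs|\sO_{\mP_\Sigma}(a_i)|_{\mA}$ and $\Bs|\sO_{\mP_\Sigma}(b_i)|_{\mA}$ is contained in $\Bs|\sO_{\mP_\Sigma}(1)|_{\mA}$, so both $\Sing X_{\mA}^o$ and $\Sing T_{\mA}^o$ lie in $\Bs|\sO_{\mP_\Sigma}(1)|_{\mA}$. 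At this point (A) for $X$ (resp.\ $T$) reduces to smoothness of $X_{\mA}^o$ (resp.\ $T_{\mA}^o$) along $\Bs|\sO_{\mP_\Sigma}(1)|_{\mA}$.

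Next I would push the reduction from $X$ to $T$. Since $T_{\mA}^o = X_{\mA}^o \cap (1)$ for a general weight-one hypersurface and $\Sing X_{\mA}^o \subset \Bs|\sO_{\mP_\Sigma}(1)|_{\mA}$, the divisor $T_{\mA}^o$ is Cartier in $X_{\mA}^o$ near every point of $\Sing X_{\mA}^o$; hence if $T_{\mA}^o$ is smooth then so is $X_{\mA}^o$, which is (A) for $X$. For (B): granting (B) for $T$, at each quotient singular point $\sf t$ the surface $T$ has a $\frac1\alpha(\beta,\alpha-\beta)$ singularity, and because $T = X\cap (1)$ is cut by a general hyperplane section through $\sf t$, the threefold $X$ has a $\frac1\alpha(1,\beta,\alpha-\beta)$ singularity at $\sf t$ (the extra coordinate direction transverse to $T$ contributes the weight-$1$ eigenvalue). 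Conversely the quasi-smoothness and type assertions for $T$ already incorporate the corresponding ones for $X$ at non-singular points, so no further information about $X$ is needed.

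The main obstacle — such as it is — is not any single hard estimate but making sure the genericity hypotheses are used consistently: one must be careful that ``general $X$'' and ``general $T$'' can be chosen simultaneously (i.e.\ that $T$ is a general weight-one section of a general $X$) so that the Bertini-type avoidance of $\Sing\Sigma_{\mA}^o$ applies to both at once, and that the dimension count via Proposition~\ref{prop:Sing13} genuinely has the strict inequality in every one of the cases in Table~\ref{Table 2}. Once those are in place the argument is purely formal, and the conclusion of Claim~\ref{cla:RedAB} follows. I would therefore present the proof as: (i) invoke Bertini and Proposition~\ref{prop:Sing13} to get (\ref{eq:SingXT}); (ii) absorb all base loci into $\Bs|\sO_{\mP_\Sigma}(1)|_{\mA}$; (iii) deduce that smoothness of $T_{\mA}^o$ gives smoothness of $X_{\mA}^o$ via the Cartier-divisor observation; (iv) deduce (B) for $X$ from (B) for $T$ via $T = X\cap(1)$.
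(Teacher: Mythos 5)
Your proposal is correct and follows essentially the same route as the paper: Bertini plus the dimension count of Proposition \ref{prop:Sing13} to obtain (\ref{eq:SingXT}), absorption of all base loci into $\Bs|\sO_{\mP_{\Sigma}}(1)|_{\mA}$, the Cartier-divisor observation to pass smoothness from $T_{\mA}^o$ to $X_{\mA}^o$, and the identification $T=X\cap(1)$ to upgrade a $1/\alpha\,(\beta,\alpha-\beta)$-singularity of $T$ to a $1/\alpha\,(1,\beta,\alpha-\beta)$-singularity of $X$. No substantive differences from the paper's argument.
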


\vspace{3pt}

\noindent{\bf Base loci}:
It is important to calculate $\Bs |\sO_{\mP_{\Sigma}}(b_i)|_{|T}$ for any $i\geq 1$.
We write down the results of calculations in the item {\bf ($*$: Base Loci)} for each No.$*$.
Since the calculations are easy, we omit them and only write down the results.

In summary, we have the following: 
\begin{itemize}
\item 
In the case that $h^0(\sO_{\mP_X}(1))\geq 3$, 
\begin{equation}
\label{eq:BsCase1}
\text{$\Bs |\sO_{\mP_{\Sigma}}(b_i)|_{|T}$ consists of the $u$-point for any $i$.}
\end{equation} 

\item 
In the case that $h^0(\sO_{\mP_X}(1))=2$, we write down $\Bs |\sO_{\mP_{\Sigma}}(b_i)|_{|C}$ instead, where $C=T\cap T'$ as in the item {\bf Equations of $X$ and $T$} above.
We also write down the locus $\{p_1=p_2=0\}_{|C}$.
\item
In the case that $h^0(\sO_{\mP_X}(1))=1$, we write down $\Bs |\sO_{\mP_{\Sigma}}(b_i)|_{|T}$ and $\{p_1=p_2=0\}_{|T}$.
\end{itemize}

\vspace{3pt}

\noindent {\bf Claims (A) and (B) for $T$}:

\vspace{1pt}

\noindent \underline{Case $h^0(\sO_{\mP_X}(1))=1$}: First we consider the case that $h^0(\sO_{\mP_X}(1))=1$. In this case, we show the claims (A) and (B) for $T$ separately on the $p_1$-chart, the $p_2$-chart, and the locus $\{p_1=p_2=0\}$.

\vspace{3pt}

\noindent {\bf $p_1$- and $p_2$-charts}: 
We see that the $p_1$- and $p_2$-charts of $\Sigma_{\mA}$ are open subsets of $\mA^{13}$ by the coordinate descriptions of them as in the subsection \ref{Charts}.
Therefore $T_{\mA}^o$ is an open subset of a complete intersection.
Noting this, we check the singularities of $T_{\mA}^o$ on the $p_1$- and $p_2$-charts both by the Jacobian criterion and by the method which we will call LPC (we will explain this below); by the Jacobian criterion and investigations of the several base loci as we have mentioned in the item ({\bf Base loci}), it turns out that there remain a finite number of possibilities of the singular points of $T_{\mA}^o$. Then adopting LPC at these points, we see that $T_{\mA}^o$ is actually smooth, i.e., we can show the claim (A) for $T$ on the $p_1$- and $p_2$-charts. Moreover, by LPC, we may also determine the type of singularities of $T$, i.e., we can show the claim (B) for $T$ on the $p_1$- and $p_2$-charts.
We refer to the explanations of  No.360 and No.393 for details. The arguments are similar in other cases so we do not write down them except when additional arguments are needed.

\vspace{3pt}

\noindent {\bf Locus $\{p_1=p_2=0\}$}: 
In each case, we can easily compute $\{p_1=p_2=0\}_{|T}$, which is written down in the item ({\bf Base loci}). Then we observe that
$\{p_1=p_2=0\}_{|T}$ consists of finite number of points. We will adopt LPC at these points to show the claims (A) and (B) for $T$ on $\{p_1=p_2=0\}$ as in the case of the $p_1$- and $p_2$-charts. 
In the case of No.360, No.569, and No.1218, however, we also use the Jacobian criterion on the $u$-chart;
since the $u$-chart of $T_{\mA}^o$ is also an open subset of a complete intersection, we can argue
similarly to the case of the $p_1$- and $p_2$-charts.

\vspace{3pt}

\noindent {\bf Linear Part Computation (LPC)}: This is one of the main methods to show both (A) and (B) at the same time. 
We refer for explicit application examples to the investigation of the $p_2$-chart of $T$ in the item ({\bf 360iv: $p_1$- and $p_2$-charts}), and the item ({\bf 393iii: LPC}).
We adopt this method both for $X$ and $T$. In the item {\bf ($*$: LPC)} in the case $*$, we write the singularity of the point of $T$ for which we apply LPC.
\vspace{3pt}

\noindent {\bf LPC for $T$}: We fix a point ${\sf t} \in T$ for which we shall adopt LPC. We choose a coordinate $x$ of $\mP_T$ which is not zero for $\sf{t}$, and one point ${\sf t'}\in T_{\mA}^x$ corresponding to ${\sf t}$. We localize the equations of $T$ at ${\sf t'}$ setting $x=1$ and transforming the coordinates of $\mA_T^x$ to make the point ${\sf t}'$ the origin. 
For example, in the case of the $u$-point, we have only to set $u=1$ in the equations. 
We compute the degree one parts of the equations and show that they span a four dimensional subspace in the cotangent space of ${\mA}_{T,x}$ at ${\sf t}'$ (the name LPC comes from this computation).
Then we see that there are two local coordinates of $T_{\mA,x}$ at ${\sf t}'$. This implies that $T$ is quasi-smooth at ${\sf t}$. Let $\alpha$ be the order of the stabilizer group at ${\sf t}'$ of the group action induced on $\mA_T^x$ from $\mP_T$. In each case, we see that the weights of the two local coordinates with respect to the $\mZ/\alpha \mZ$-action are $\beta$ and $\alpha-\beta$ with some $\beta\in \mN$ coprime to $\alpha$. Thus $T$ has a $1/\alpha (\beta,\alpha-\beta)$-singularity at ${\sf t}$ which is the desired type in each case.

\vspace{3pt}

\noindent {\bf LPC at the $u$-point}:
Here we do not restrict to the case that $h^0(\sO_{\mP_X}(1))=1$.
We see that $X$ contains the $u$-point except in the cases of No.360 and No.1218.
At the $u$-point, we adopt LPC to answer (A) and (B) for any No.$*$. Explicitly, we refer to the explanation of No.393. 
As for LPC at the $u$-point, we do not need the full equations of $X$ or $T$. So we may use this method both for $X$ and $T$.
We explain this reason for $T$. As for the equations of the sections as in (\ref{sectioneq}),
we have only to present only degree one parts of them. Indeed, the $u$-point is the origin of $\mA_T^u$ if we set $u=1$.
We observe that, in each case, $a_i$ for any $i\geq 1$ is less than $w(u)$, hence $u$ does not appear in the equations of the sections as in
(\ref{sectioneq}). Therefore, by setting $u=1$, these equations of the sections do not change.
The higher degree parts of them do not contribute to the degree one parts of the equations of $T$ with $u=1$, hence we may ignore them when we adopt LPC.

\vspace{3pt}

\vspace{1pt}

\noindent \underline{Case $h^0(\sO_{\mP_X}(1))=2$}:
Now we consider the case that $h^0(\sO_{\mP_X}(1))=2$.
As we have mentioned above in the item {\bf LPC at the $u$-point}, the claims (A) and (B) for $T$ at the $u$-point follow from LPC. 
Outside the $u$-point, the smoothness of $C_{\mA}^o$ implies that of $T_{\mA}^o$. Indeed, this can be shown by noting $h^0(\sO_{\mP_T}(1))=1$ and repeating for $T$ and $C$ what we did for $X$ and $T$ in the item {\bf Reduction of (A) and (B) to $T$} above.
As in the case that $h^0(\sO_{\mP_X}(1))=1$, the smoothness of $C_{\mA}^o$ will be checked separately on the $p_1$-chart, the $p_2$-chart, and the locus $\{p_1=p_2=0\}_{|C}$, and 
we use the Jacobian criterion and LPC. 

\vspace{3pt}

\noindent {\bf LPC for $C$}: LPC works also for $C$. We fix a point ${\sf t} \in C$ for which we shall adopt LPC. We choose a coordinate $x$ of $\mP_C$ which is not zero for $\sf{t}$, and one point ${\sf t'}$ corresponding to ${\sf t}$. 
Exactly in the same way as LPC for $T$,
we see that there is one local coordinate of $C_{\mA}^x$ at ${\sf t}'$. Let $\alpha$ be the order of the stabilizer group at ${\sf t}'$ of the group action induced on $\mA_C^x$ from $\mP_C$. In each case, we see that the weight of the local coordinate with respect to the $\mZ/\alpha \mZ$-action is $\alpha-1$. Then, $T$ has a $1/\alpha\, (1,\alpha-1)$-singularity at ${\sf t}$ and $X$ has a $1/\alpha\, (1,1,\alpha-1)$-singularity at ${\sf t}$.

\vspace{3pt}

\vspace{1pt}

\noindent \underline{Case $h^0(\sO_{\mP_X}(1))\geq 3$}: Finally, in the case that $h^0(\sO_{\mP_X}(1))\geq 3$, the claims (A) and (B) follows from LPC at the $u$-point
by (\ref{eq:SingXT}) and (\ref{eq:BsCase1}).

\vspace{3pt}

\noindent {\bf $1/2(1,1)$-singularities}:
In many cases, $T$ turns out to have $1/2(1,1)$-singularities.
Usually, the coordinates of $1/2(1,1)$-singularities are complicated and hence LPC is not appropriate for them.
It suffices, however, to obtain the number of the candidates of $1/2(1,1)$-singularities and to check it is finite since we can immediately conclude that a point is a $1/2(1,1)$-singularity once we know it is obtained locally by the $\mZ_2$-quotient of a smooth surface with an isolated fixed point.
So LPC is not needed and the Jacobian criterion is sufficient for them.
We refer for an explicit application example to the item ({\bf 642iv: $p_1$-chart}) in the treatment of No.642. 

\vspace{5pt}

\noindent {\bf Claim (C)}:
Assuming that we have checked (A) and (B) for $X$,
we finish our proof of the claim (C) here.

Note that $X$ is normal and irreducible . Indeed, the normality follows from (A) and (B). The connectedness, hence the irreducibility  of $X$ follows since so is $\Sigma$ by Proposition \ref{prop:Tom14} and Lemma \ref{lem:1314} and $X$ is a weighted complete intersection in $\Sigma$ by ample divisors.

Hence, to prove $X\cap \{p_1=0\}$ is irreducible, it suffices to show that it is normal since $X\cap \{p_1=0\}$ is an ample divisor of $X$. Since $X$ has only terminal singularities by the claim (B), $X\cap \{p_1=0\}$ is Cohen-Macaulay (cf.~\cite[Cor.~5.25]{KM}).
Hence, to prove that $X\cap \{p_1=0\}$ is normal, it suffices to show that $\dim \Sing (X\cap \{p_1=0\})\leq 0$.
Since $\SigmaP\cap \{p_1=0\}$ is normal by Proposition \ref{prop:Tom14} (3) and Lemma \ref{lem:1314}, 
we have only to check 
\begin{equation}
\label{0dimT}
\text{$\dim \Sing (T\cap \{p_1=0\})\leq 0$ along $\bigcup_i \Bs |\sO(a_i)|$}
\end{equation}
by a similar discussion to that for (A) and (B).

First we consider the case that $h^0(\sO_{\mP_X}(1))=1$. Recall that in this case, the full equation of $T$ is given (see the item {{\bf Equations of $X$ and $T$}} as above).
Similarly to (A) and (B), we check (\ref{0dimT}) on the $p_2$-chart and the locus $\{p_1=p_2=0\}$ separately using the equation of $T$.
On the $p_2$-chart, the check is done by the Jacobian criterion since they are open subsets of complete intersections.
As for $\{p_1=p_2=0\}$, we are done since the restrictions of several base loci to $\{p_1=p_2=0\}$ consists of a finite number of points as we have mentioned above in the item {\bf Locus $\{p_1=p_2=0\}$}.
In the case that $h^0(\sO_{\mP_X}(1))=2$, 
it is easy to see that $\Bs |\sO_{\mP_{\Sigma}}(a_i)|\cap X\cap \{p_1=0\}=\Bs |\sO_{\mP_{\Sigma}}(a_i)| \cap C\cap \{p_1=0\}$ for any $i$.
We see that $\Bs |\sO_{\mP_{\Sigma}}(a_i)|\cap C\cap \{p_1=0\}$ consists of finite number of points except in the case of No.5859.
So we are done in those cases.
As for No.5859, we can argue similarly to the case that $h^0(\sO_{\mP_X}(1))=1$.
Finally, in the case that $h^0(\sO_{\mP_X}(1))\geq 3$, we are done by (\ref{eq:BsCase1}).$\hfill\square$

\vspace{10pt}


\subsection{Case $h^0(\sO_{\mP_X}(1))=1$}~

\label{Case1}
\vspace{3pt}
\noindent \fbox{\bf No.360}\,

\vspace{3pt}

\noindent {\bf (360i: Equation of $T$)}
{\small{
\begin{center}
$\begin{array}{|c|c|c|c|c|c|c|c|}
\hline
\text{weight} & 1 & 2 & 3  & 4 & 5 &  6 &  8 \\
\hline
\text{equation} & s_{23}=0 & s_{13}=0 & s_{12}=0 & q_3=a_3 t_2 & q_2=a_2 t_1 & q_1=a_1 p_4 & 
u=a_0 p_2+b_0 t_2^2\\
& & s_{22}=0 & t_3=0 & s_{11}=a_{11}t_2 & & & \\
\hline
\end{array}$
\end{center}
}}
In this table, the six parameters $a_3,\dots,b_0\in \mC$ are chosen generally.
Then note that 
\begin{equation}
\label{eq:360Tamb}
T\subset \mP(t_2,t_1,p_4,p_1,r,p_2,p_3)= \mP(4,5,6,7^2,8,9).
\end{equation}

\vspace{3pt}

\noindent {\bf (360ii: Base loci)}\quad 
\begin{center}
$\begin{array}{|c|c|}
\hline
\{p_1=p_2=0\}_{|T} & \Bs |\sO(8)|_{|T}\\
\hline
\text{$p_4$-point}, \text{the locus $(\ref{u-chart360})$} & \text{$p_1$-point, $p_4$-point}\\
\hline
\end{array}$
\end{center}

\vspace{3pt}

\begin{equation}
\label{u-chart360}
\{-a_2^2 t_1^3 + a_3 r t_2^2 = 0,\, 
  r t_1 + a_{11}  a_3  t_2^3 = 0,\, -r^2 - a_{11} a_2^2 t_1^2 t_2 = 0\}
\subset \mP(r,t_1,t_2).
\end{equation}
Note that, for any point of the locus
(\ref{u-chart360}), none of $r,t_1,t_2$ is zero. 

\vspace{3pt}

\noindent {\bf (360iii: LPC)}\quad
\begin{center}
$\begin{array}{|c|c|}
\hline
\text{$p_1$-point} & \text{$p_4$-point} \\
\hline
1/7(2,5) & 1/6(1,5)\\
\hline
\end{array}$
\end{center}

We adopt LPC at the $p_1$- and $p_4$-points by setting $p_1=1$ and $p_4=1$ respectively.
The way to adopt LPC for them is very similar to that for the $u$-point in other cases. We refer to the item ({\bf 393iii}) for the explanations of LPC at the $u$-point. 

\vspace{3pt}

\noindent {\bf (360iv: $p_1$- and $p_2$-charts)}

\vspace{3pt}

\noindent \underline{\bf $p_1$-chart}:
We show that, on this chart, $T$ is quasi-smooth and 
has one $1/7(2,5)$-singularity at the $p_1$-point. 

By the Jacobian criterion, we see $\Sigma(5)_{\mA}^o$ is smooth, and $\Sigma(6)_{\mA}^o$ is smooth along $\Bs |\sO(6)|$, hence
$\Sigma(6)_{\mA}^o$ is smooth. Detailed calculations are omitted. We refer to No.393 for similar calculations. Now note that $T_{\mA}^o=\Sigma(8)^o_{\mA}$.
By the description of $\Bs |\sO(8)|_{|T}$ as above, it consists of only the $p_1$-point in the $p_1$-chart.
We have already determined by LPC that $T$ has a $1/7(2,5)$-singularity at this point. Therefore $T$ is quasi-smooth on this chart.

We show that the $p_1$-point is the unique singularity of $T$ on this chart. 
Since $w(p_1)=7$, the cyclic group $\mZ_7$ acts on $\mA_T^{p_1}$.
By (\ref{eq:360Tamb}), we see that the action on $\mA_T^{p_1}$ is not free only along the $r$-line, which intersects $T_{\mA}^{p_1}$ at the $p_1$-point by the last equation of $\Sigma^{14}_{\mA}$ in (\ref{Key2}). As we have seen above, $T$ has a $1/7(2,5)$-singularity at the $p_1$-point.

Therefore $T$ is quasi-smooth and has one $1/7(2,5)$-singularity on this chart.

\vspace{3pt}

\noindent \underline{\bf $p_2$-chart}:
We show that, on this chart, $T$ is quasi-smooth and 
has two $1/4(1,3)$-singularities. 

We may check that 
$\Sing \Sigma(4)_{\mA}=\{p_1=p_3=p_4=s_{23}=t_2=0\}\cap \Sigma(4)_{\mA}$.
Checking $\Sing \Sigma(5)_{\mA}$ along $\Bs |\sO(5)|\cap \{p_1=0\}$,
we see that 
$\Sing \Sigma(5)_{\mA}=\Sing \Sigma(4)_{\mA}\cap \Sigma(5)_{\mA}$.
Similarly, we see that 
$\Sing \Sigma(6)_{\mA} =\Sing \Sigma(4)_{\mA}\cap \Sigma(6)_{\mA}$.
These are checked by the Jacobian criterion.
We see that $\Sing \Sigma(6)_{\mA}$ will disappear by taking a general octic section. 
We also note that $\Bs |\sO(8)|$ is disjoint from the $p_2$-chart since $w(p_2)=8$. Therefore $T$ is quasi-smooth on this chart.

We determine the singularities of $T$ on this chart. 
Since $w(p_2)=8$, the cyclic group $\mZ_8$ acts on $\mA_T^{p_2}$.
By (\ref{eq:360Tamb}), we see that the action on $\mA_T^{p_2}$ is not free along $\{t_1=p_1=r=p_3=0\}$. This intersects $T_{\mA}^{p_2}$ along
$\{-a_{11} a_3^2t_2^4  + b_0 t_2^2 + a_0 = 0\}$
in the $t_2$-line,
which map to two points on $T$ by the $\mZ_8$-quotient.
We adopt LPC for these two points. For this, we set $t_2=1$ instead of setting $p_2=1$.
In $\mA_T^{t_2}$, the inverse image of these two points on $T$ are 
$T_{\mA}^{t_2}\cap \{-a_{11} a_3^2  + b_0 p_2 + a_0p_2^2 = 0\}$, which consists of two points.
Choose one $A\in \mC$ such that $A^2=b_0^2 + 4 a_0 a_{11} a_3^2$.
Then the $p_2$-coordinates of these two points in $T_{\mA}^{t_2}$ are 
$\frac{-b_0 \pm A}{2 a_0}$.
By the corresponding coordinate change $p_2 = p'_2 + \frac{-b_0 + A}{2 a_0}$, or $p_2 = p'_2 + \frac{-b_0 - A}{2 a_0}$
for each of two points, it becomes the origin of the $\mA_T^{t_2}$.
Then we can apply LPC.
Consequently, we see that $T$ has $1/4 (1,3)$-singularities at these two points. 
Therefore, a general $T$ is quasi-smooth and 
has two $1/4(1,3)$-singularities on this chart.

\vspace{3pt}

\noindent \noindent {\bf (360v: $\{p_1=p_2=0\}$)}
We check that $T_{\mA}^o$ is smooth along $\{p_1=p_2=0\}$ refering to the item {\bf (360ii)}.
As for the $p_4$-point, we are already done by LPC.
As we have seen above, any point $x$ of (\ref{u-chart360}) satisfies $t_2\not =0$, thus we see that $x$ is contained in the $u$-chart of $\Sigma(6)_{\mA}$ by the equation of $T$ of weight eight.
Recall the coordinate description of the $u$-chart of $\Sigma^{14}_{\mA}$ as in the subsection \ref{Charts}.
By a similar method to that we adopted in the case of the $p_1$-chart,
we see that $T_{\mA}^o$ is smooth in the $u$-chart of $\Sigma(6)_{\mA}$ by the Jacobian criterion.

\vspace{5pt}


\noindent \fbox{\bf No.393}

\vspace{3pt}

\noindent {\bf (393i: Equation of $T$)}
{\small{
\begin{center}
$\begin{array}{|c|c|c|c|c|c|c|c|}
\hline
\text{weight} & 1 & 2 & 3  & 4 & 5 &  6 &  7 \\
\hline
\text{equation} & s_{23}=0 & s_{13}=0 & s_{12}=0 & q_3=a_3 t_2 & q_2=a_2 p_4+b_2 t_1 & q_1=a_1 p_1 & 
r=a_0 p_2\\
& & s_{22}=0 & t_3=0 & s_{11}=a_{11}t_2 & & & \\
\hline
\end{array}$
\end{center}
}}
In this table, the six parameters $a_3,\dots, a_0\in \mC$ are chosen generally.
Note that 
\begin{equation}
\label{eq:393X}
T\subset \mP(t_2,p_4,t_1,p_1,p_2,p_3,u)\simeq \mP(4,5^2,6,7,8,9).
\end{equation}

\vspace{3pt}

\noindent {\bf (393ii: Base loci)}
{\small{
\[
\begin{array}{|c|c|}
\hline
\{p_1=p_2=0\}_{|T} = \Bs |\sO(i)|_{|T}\cap \{p_1=p_2=0\} (i=2,3)  & \Bs |\sO(5)|_{|T}\cap \{p_2=0\} \\
\qquad \qquad \quad\,= \Bs |\sO(i)|_{|T}\cap \{p_2=0\} (i=4,6) & \\
\hline
 \text{$u$-point}, \,\text{$p_4$-point},\, \sf{p}_5    & \text{$u$-point}, \, \sf{p}_2\\
\hline
\end{array}
\]
}}
In this table,
\begin{align*}
&{\sf{p}_5}:=\{a_2 p_4+b_2 t_1=0\}\in \mP(p_4,t_1),\\
& {\sf{p}_2}:=\{a_1 p_3-a_{11} a_3 t_2^2=0, a_1^2 p_1^2+a_{11} a_3^2 t_2^3=0\}\in \mP(p_1,p_3,t_2).
\end{align*}

\vspace{3pt}

\noindent {\bf (393iii: LPC)}\quad
\begin{center}
$\begin{array}{|c|c|c|c|}
\hline
\text{$u$-point} & \text{$p_4$-point} & \sf{p}_5 & \sf{p}_2\\
\hline
1/9(4,5) & 1/5(1,4) & 1/5(2,3) & 1/2(1,1)\\
\hline
\end{array}$
\end{center}
\vspace{3pt}

\noindent \underline{\bf LPC at the $u$-point}:
Setting $u=1$, we apply LPC at the $u$-point. Note that there is a unique point ${\sf u}'$ of $\mA_T^u$ corresponding to the $u$-point, and the coordinates of $\mA_T^u$ are $t_2,p_4,t_1,p_1,p_2,p_3$ by (\ref{eq:393X}). We may easily compute that the linear parts of the equations of $T$ with $u=1$ generates the subspace spanned by $p_1,p_2,p_3,p_4$ of the cotangent space of $\mA_T^u$ at ${\sf u}'$. Thus, as a local coordinate of $T_\mA^u$ at the point ${\sf u}'$,
we may take $t_2,t_1$. Noting that 
$w(t_2)=4$, and $w(t_1)=5$, we see that $T$ has a $1/9(4,5)$-singularity at the $u$-point.

\vspace{3pt}
 
\noindent \underline{\bf LPC at the $p_4$-point}: Setting $p_4=1$, we apply LPC at the $p_4$-point in a very similar way to that for the $u$-point. We omit detailed explanations. Consequently, $T$ has a $1/5(1,4)$-singularity at the $p_4$-point.

\vspace{3pt}

\noindent \underline{\bf LPC at the point $\sf{p}_5$}:
Setting $t_1=1$, we apply LPC at ${\sf p}_5$ after the coordinate change $p_4=p'_4-b_2/a_2$.
Detailed explanations are omitted. Consequently, $T$ has a $1/5(2,3)$-singularity at ${\sf p}_5$.

\vspace{3pt}

\noindent \underline{\bf LPC at the point ${\sf p}_2$}:
We set $t_2=1$. There are two points of $\mA_T^{t_2}$ corresponding to the point ${\sf p}_2$.
Choose one $A\in \mC$ such that $A^2=-\frac{a_{11} a_3^2}{a_1^2}$ and let ${\sf p}'_2$ be the point of $\mA_T^{t_2}$ with $p_2=A$
corresponding to the point ${\sf p}_2$. After the coordinate change $p_3=p'_3+\frac{a_{11}a_3}{a_1}$ and
$p_1=p'_1+A$, we apply LPC. Detailed explanations are omitted. Consequently, $T$ has a $1/2(1,1)$-singularity at ${\sf p}_2$.

\vspace{3pt}

\noindent {\bf (393iv: $p_1$- and $p_2$-charts)}

\vspace{3pt}

\noindent \underline{\bf $p_1$-chart}:
We show that, on this chart, $T$ is quasi-smooth, and has a $1/2 (1,1)$-singularity at $\sf{p}_2$.

To show that $T$ is quasi-smooth, we wiil check step by step that $\Sigma(3)_{\mA}^o$, $\Sigma(5)_{\mA}^o$, $\Sigma(6)_{\mA}^o$, and $\Sigma(7)_{\mA}^o=T_{\mA}^o$ are smooth.   

By the equations of $T$ of weight $2$ and $3$ and the coordinate description of the $p_1$-chart as in the subsection \ref{Charts}, we immediately see that $\Sigma(3)_{\mA}^o$ is smooth on this chart.
By the equations of $T$ of weight $2,3,4,5$ and the expression of $s_{11}$ on the $p_1$-chart as in the subsection \ref{Charts},
we see that
$\Sigma_{\mA}(5)^o=
\{a_{11} t_2 +p_3^2  + p_4 t_1 +p_2 p_4 t_2=0\},$
which is easily to be seen smooth.
As for the singularities of $\Sigma(6)_{\mA}^o$,
we have only to check it along $\Bs|\sO(6)|_{\mA}$ since we have already shown that $\Sigma(5)_{\mA}^o$ is smooth. 
Since $w(p_1)=6$, the $p_1$-chart is disjoint from $\Bs |\sO(6)|_{\mA}$.
Therefore $\Sigma_{\mA}(6)^o$ is smooth.
Similarly, as for the singularities of $\Sigma(7)_{\mA}^o$,
we have only to check it along $\Bs|\sO(7)|_{\mA}$ since we have shown that $\Sigma(6)_{\mA}^o$ is smooth on this chart. 
The check can be done again by the Jacobian criterion.

\vspace{3pt}

Finally, we determine the singularities of $T$ on this chart. 
Since $w(p_1)=6$, the cyclic group $\mZ_6$ acts on this chart of $\mA_T^{p_1}$. By (\ref{eq:393X}), we see that the action on $\mA_T^{p_1}$ is not free only along $\{t_2=p_4=t_1=u=0\}\cup \{t_2=p_4=t_1=p_2=p_3=0\}$, which intersects $T_{\mA}$ at the inverse image of the point $\sf{p}_2$ (we see this by the equation of $T$).
We have already shown that $T$ has a $1/2 (1,1)$-singularity at the point $\sf{p}_2$.

\vspace{3pt}

\noindent \underline{\bf $p_2$-chart}:
We show that $T$ is smooth on this chart.
The argument below is very similar to that in the case of the $p_1$-chart.

By the equations of $T$ of weight $2,3,4$ and the expressions of $s_{22}$ on the $p_2$-chart as in the subsection \ref{Charts},
we see that
$\Sigma_{\mA}(4)^o=
\{p_3^2 + p_1 p_4 t_1 + p_4 t_2 +a_{11} p_1^2  t_2=0\}.$
It is easy to check that the singular locus of this is $\Sigma_{\mA}(4)^o\cap \{p_1=p_3=p_4=s_{23}=t_2=0\}$.

By the equations of $T$ of weight $2,3,4,5$ and the expressions of $q_2$ and $s_{22}$ on the $p_2$-chart as in the subsection \ref{Charts},
we see that
$\Sigma_{\mA}(5)^o=\{a_2 p_4  + b_2 t_1 +p_1 q_1 +b_3 p_3 t_2=0,\,
p_3^2  + p_1 p_4 t_1 + p_4 t_2 +a_{11}  p_1^2  t_2=0\}.$
By the generality of the equations of $T$ of weight $2,3,4,5$ and Bertini's singularity theorem,
$\Sigma(5)_{\mA}^o$ is singular along the restriction of $\Sing \Sigma(4)_{\mA}^o$ and possibly along the restriction of 
$\Bs |\sO(5)|_{\mA}$.
Therefore we have only to check the singularlity of 
$\Sigma(5)_{\mA}^o$ along  
$\Bs |\sO(5)|_{\mA}=\{s_{23}=p_4=t_1=0\}$.
Using the above two equations, 
we see that 
this is contained in 
the restriction of $\Sing \Sigma(4)_{\mA}^o$.
Therefore 
$\Sing \Sigma(5)_{\mA}^o$ is equal to $\Sigma(5)_{\mA}^o\cap \{p_1=p_3=p_4=s_{23}=t_2=0\}$.

By similar considerations to 
those for $\Sigma(5)_{\mA}^o$,
we see 
$\Sing \Sigma(6)_{\mA}^o$ is equal to $\Sigma(6)_{\mA}^o\cap \{p_1=p_3=p_4=s_{23}=t_2=0\}$.

Since $w(p_2)=7$, the $p_2$-chart is disjoint from $\Bs |\sO(7)|_{\mA}$.
Therefore 
$\Sing \Sigma(7)_{\mA}^o$ is equal to $\Sigma(7)_{\mA}^o\cap \{p_1=p_3=p_4=s_{23}=t_2=0\}$,
which turns out to be empty on the $p_2$-chart. Consequently, 
$T_{\mA}^o=\Sigma(7)_{\mA}^o$ is smooth on
the $p_2$-chart, hence $T$ is quasi-smooth on this chart.

\vspace{3pt}

Finally, we show that $T$ is smooth on this chart. 
Since $w(p_1)=7$, the cyclic group $\mZ_7$ acts on $\mA_T^{p_1}$.
By (\ref{eq:393X}), we see that the action on $\mA_T^{p_1}$ is not free only at the $p_2$-point. This point is, however, not contained in $T$, thus $T$ is smooth on the $p_2$-chart.

\vspace{5pt}


\noindent \fbox{\bf No.569}\,

\vspace{3pt}
\noindent {\bf (569i: Equation of $T$)}
{\small{
\begin{center}
$\begin{array}{|c|c|c|c|c|c|c|c|}
\hline
\text{weight} & 1 & 2 & 3  & 4 & 5 &  6  \\
\hline
\text{equation} & s_{23}=0 & s_{13}=0 & p_4=a_4 s_{12} &   q_2=a_2 t_3 & q_1=a_1 p_1+b_1 t_2 & 
r=a_0 p_2+b_0 s_{12}^2\\
& & s_{22}=0& q_3=a_3 s_{12} & s_{11}=a_{11}t_3 & &  t_1=c_1 p_2+d_1 s_{12}^2\\
\hline
\end{array}$
\end{center}
}}
In this table, the ten parameters $a_4,\dots,d_1\in \mC$ are chosen generally.
Note that
\begin{equation}
\label{eq:569X}
T\subset \mP(s_{12}, t_3,p_1,t_2,p_2,p_3,u)=\mP(3,4,5^2,6,7,9).
\end{equation}

\vspace{3pt}

\noindent {\bf (569ii: Base loci)}\quad
\begin{center}
$\begin{array}{|c|c|c|}
\hline
\{p_1=p_2=0\}_{|T} & \Bs |\sO(5)|_{|T} & \Bs |\sO(6)|_{|T} \\
\hline
\text{$u$-point}, (\ref{eq:569Bs1})     & \text{$u$-point},  \sf{p}_3, \sf{q}_3 &\text{$u$-point},  \sf{p}_5 \\
\hline
\end{array}$
\end{center}
\begin{itemize}
\item One point with $u\not=0$ constitutes the locus
\begin{align}
\label{eq:569Bs1}
&\{s_{12}=1,\ \ a_3^2 d_1 - a_2 b_1 t_3^3=0,
 a_3^3 - a_3 b_0^2+(2 a_2^2 b_1 - a_{11} a_2^2) a_3 t_3^3 - a_3 a_4 u=0,\\
&\qquad t_2= (a_2 t_3^2) /a_3\}
\subset \mP(s_{12},t_3,t_2,u).\nonumber
\end{align}
\item
The two points $\sf{p}_3,\sf{q}_3$ constitute the locus
\begin{align*}
\label{eq:569index 3}
&\{s_{12}=1,\ \
u=  (a_3^2 - (b_0  + a_0 p_2)^2)/a_4,\\
&\qquad a_3 a_4 d_1 + (a_3 + b_0 + a_3 a_4 c_1) p_2 + a_0 p_2^2=0\}
 \subset \mP(s_{12},p_2,u)\nonumber.
\end{align*}
\item
${\sf{p}_5}:=\{a_1 p_1+b_1 t_2=0\}\in \mP(p_1,t_2).$
\end{itemize}

\vspace{3pt}

\noindent {\bf (569iii: LPC)}\quad 
\begin{center}
$\begin{array}{|c|c|c|}
\hline
\text{$u$-point} & \sf{p}_5 & \sf{p}_3, \sf{q}_3\\
\hline
1/9(4,5) & 1/5(2,3) & 1/3(1,2)\\
\hline
\end{array}$
\end{center}
As for LPC at the points $\sf{p}_3,\sf{q}_3$, it is very similar to LPC at the two points in the $p_2$-chart of $T$ in the case of No.360.

\vspace{3pt}

\noindent \noindent {\bf (569iv: $\{p_1=p_2=0\}$)}
By {\bf (569ii)} and {\bf (569iii)}, to check that $T_{\mA}^o$ is smooth along the locus $\{p_1=p_2=0\}$, we have only to show that
the $u$-chart of  $T_{\mA}^o$ is smooth at the points in (\ref{eq:569Bs1}).
We see immediately see that $\Sigma_{\mA}(2)^o$ is smooth on the $u$-chart by the equations of $T$ and the coordinate descriptions of the $u$-chart of $\SigmaA$ as in the subsection \ref{Charts}.
Note that none of the coordinates $s_{12},t_3,t_2,u$ of these points  in (\ref{eq:569Bs1}) are zero and the weights of them are $3,4,5,9$, respectively.
Therefore these points are not contained in $\Bs |\sO_{\mP_{\Sigma}}(i)|$ with $i=3,4,5,6$.
Hence $T_{\mA}^o$ is smooth at the points  in (\ref{eq:569Bs1}).

Finally, we show that $T$ is singular only at the $u$-point along the locus $\{p_1=p_2=0\}$. 
Since $w(u)=9$, the cyclic group $\mZ_9$ acts on ${\mA}_T^u$.
By (\ref{eq:569X}), it is easy to see that the action on $\mA_T^u$ is not free only at the $u$-point.

\vspace{5pt}

\noindent \fbox{\bf No.574}\,

\vspace{3pt}

\noindent {\bf (574i: Equation of $T$)}
{\small{
\begin{center}
$\begin{array}{|c|c|c|c|c|c|c|}
\hline
\text{weight} & 1 & 2 & 3  & 4 & 5 &  6  \\
\hline
 & s_{23}=0 & s_{13}=0 & q_3=a_3 t_2 &   q_2=a_2 t_1 & q_1=a_1 p_1+b_1 p_4 & 
r=a_0 p_2+b_0 t_2^2\\
\text{equation}& & s_{22}=0& s_{12}=a_{12} t_2 & s_{11}=a_{11}t_1 & &  \\
& & t_3=0 & & & & \\
\hline
\end{array}$
\end{center}
}}
In this table, the parameters $a_3,\dots,b_0\in \mC$ are chosen generally.

Note that 
\[
T \subset \mP(t_2,t_1,p_1,p_4, p_2, p_3,u)=\mP(3,4,5^2,6,7^2).
\]

\vspace{3pt}

\noindent {\bf (574ii: Base loci)}\quad
\begin{center}
$\begin{array}{|c|c|c|}
\hline
\{p_1=p_2=0\}_{|T} & \Bs |\sO(5)|_{|T} & \Bs |\sO(6)|_{|T} \\
\hline
\text{$u$-point}, \text{$p_4$-point}    & \text{$u$-point},  \sf{p}_3 &\text{$u$-point}, \text{$p_4$-point},\sf{p}_5 \\
\hline
\end{array}$
\end{center}
\begin{align*}
&{\sf{p}_3}:=\{a_0p_2+(b_0 +a_{12} a_3) t_2^2=0\}\in  \mP(p_2,t_2).\\
&{\sf{p}_5}:=\{a_1 p_1+b_1 p_4=0\}\in \mP(p_1,p_4).
\end{align*}

\noindent {\bf (574iii: LPC)}\quad 
\begin{center}
$\begin{array}{|c|c|c|c|}
\hline
\text{$u$-point} & \text{$p_4$-point} & \sf{p}_5 & \sf{p}_3\\
\hline
1/7(1,4) & 1/5(1,4) & 1/5(2,3) & 1/3(1,2)\\
\hline
\end{array}$
\end{center}
\vspace{5pt}


\noindent \fbox{\bf No.642}\,

\vspace{3pt}

\noindent {\bf (642i: Equation of $T$)}
{\small{
\begin{center}
$\begin{array}{|c|c|c|c|c|c|c|}
\hline
\text{weight} & 1 & 2 & 3  & 4 & 5 &  6  \\
\hline
\text{equation} & s_{23}=0 & s_{13}=0 & p_4=a_4 t_3 &   s_{11}=a_{11} p_1+b_{11}t_2 & t_1=a_1 p_2 & 
q_1=b_1 p_3+c_1 t_3^2\\
& & s_{22}=0& s_{12}=a_{12} t_3 & q_3=a_3p_1+b_3 t_2 &q_2=a_2 p_2 &  \\
\hline
\end{array}$
\end{center}
}}
In this table, the parameters $a_4,\dots,c_1\in \mC$ are chosen generally. Note that
\begin{equation}
\label{eq:642X}
T\subset \mP(t_3,p_1,t_2,p_2,p_3,r,u)=\mP(3,4^2,5,6,7,11).
\end{equation}

\vspace{3pt}

\noindent {\bf (642ii: Base loci)}\quad 
\begin{center}
$\begin{array}{|c|c|}
\hline
\{p_1=p_2=0\}_{|T} =\Bs |\sO(4)|_{|T} & \Bs |\sO(6)|_{|T} \\
\hline
\text{$u$-point}, \sf{p}_3    & \text{$u$-point},\sf{p}_4 \\
\hline
\end{array}$
\end{center}
\begin{align*}
&{\sf{p}_3}:=\{p_3+a_4 t_3^2=0\}\in \mP(p_3,t_3).\\
&{\sf{p}_4}:=\{a_{11} p_1 + b_{11} t_2 = 0\}\in\mP(p_1,t_2).
\end{align*}

\vspace{3pt}

\noindent {\bf (642iii: LPC)}\quad
\begin{center}
$\begin{array}{|c|c|c|}
\hline
\text{$u$-point} &  \sf{p}_4 & \sf{p}_3\\
\hline
1/11(4,7) & 1/4(1,3) & 1/3(1,2)\\
\hline
\end{array}$
\end{center}
\vspace{3pt}

\noindent {\bf (642iv: $p_1$-chart)}
The smoothness of $T_{\mA}^o$ on the $p_1$-chart follows from the Jacobian criterion and LPC at the point $\sf{p}_4$.

To check the singularities of $T$ on the $p_1$-chart, we note that $w(p_1)=4$.
Therefore the cyclic group $\mZ_4$ acts on $\mA_T^{p_1}$.
We compute the stabilizer groups of points of $\mA_T^{p_1}$ noting (\ref{eq:642X}). Then we see that the singular locus of $T$ on the $p_1$-chart consists of the point $\sf{p}_4$ and
\[
\{p_1=1,
t_2 = -(a_3 + b_1)/b_3, -(-a_3 b_{11} + a_{11} b_3 - b_{11} b_1)/b_3  =  p_3^2\},
\]
which consists of one point $\sf{p}_2$. By LPC, we have already determined the singularity of $T$ at the point ${\sf p}_4$.
We wiil determine the singularity of $T$ at the point ${\sf p}_2$ more directly as follows.
On the affine space $\mA_T^{p_1}$, there are two points corresponding to ${\sf p}_2$. 
Since $p_3\not =0$ at these two points, we see that the stabilizer groups of the $\mZ_4$-action at them are isomorphic to $\mZ_2$. Therefore
$T$ has a $1/2(1,1)$-singularity at $\sf{p}_2$. 
\vspace{5pt}


\noindent \fbox{\bf No.644}

\vspace{3pt}

\noindent {\bf (644i: Equation of $T$)}
{\small{
\begin{center}
$\begin{array}{|c|c|c|c|c|c|c|}
\hline
\text{weight} & 1 & 2 & 3  & 4 & 5 &  6  \\
\hline
 & s_{23}=0 & s_{13}=0 & s_{12}=a_{12} t_2 &  p_4=a_4 p_1+b_4 t_1 & q_2=a_2 p_2 & 
q_1=a_1 p_3+b_1 t_2^2\\
\text{equation} & & s_{22}=0& & q_3=a_3 p_1+b_3 t_1 & &  \\
& & t_3=0 & & s_{11}=a_{11}p_1+b_{11}t_1 & & \\
\hline
\end{array}$
\end{center}
}}
In this table, the parameters $a_{12},\dots,b_1\in \mC$ are chosen generally. Note that 
\[
T\subset \mP(t_2,p_1,t_1,p_2,p_3,r,u)=\mP(3,4^2,5,6,7,10)
\]

\vspace{3pt}

\noindent {\bf (644ii: Base loci)}\quad
\begin{center}
$\begin{array}{|c|c|}
\hline
\{p_1=p_2=0\}_{|T} =\Bs |\sO(4)|_{|T} & \Bs |\sO(6)|_{|T} \\
\hline
\text{$u$-point}    & \text{$u$-point},\sf{p}_4,\sf{q}_4 \\
\hline
\end{array}$
\end{center}
The two points $\sf{p}_4,\sf{q}_4$ constitute the locus
\begin{equation*}
\label{eq:644Bs6}
\{a_{11} p_1^2+(a_4+b_{11}) p_1 t_1+ b_4 t_1^2=0\}\subset \mP(p_1,t_1).
\end{equation*}

\vspace{3pt}

\noindent {\bf (644iii: LPC)}\quad 
\begin{center}
$\begin{array}{|c|c|}
\hline
\text{$u$-point} & \sf{p}_4, \sf{q}_4\\
\hline
1/10(3,7) &1/4(1,3) \\
\hline
\end{array}$
\end{center}
\vspace{3pt}
\noindent {\bf (644iv: $p_1$-chart)}
We may determine the singularities of $T$ on this chart in a very similar way to that for No.642.
The singular locus consists of the points ${\sf p}_4$, ${\sf q}_4$, and one point $\sf{p}_2$, which constitutes
\begin{align*}
&\{p_1=1, t_1 = -(a_1 + a_3)/b_3,\\
&p_3^2=-\frac{a_1^2 b_4 + 2 a_1 a_3 b_4 + a_3^2 b_4 - a_1 a_4 b_3 - a_1 b_{11} b_3 - a_4 a_3 b_3 - 
 b_{11} a_3 b_3 + a_{11} b_3^2}{b_3^2}\}.
\end{align*}
We see that $T$ has a $1/2(1,1)$-singularity at ${\sf p}_2$ noting $p_3\not =0$.
\vspace{5pt}

\noindent \fbox{\bf No.1091}\,

\vspace{3pt}

\noindent {\bf (1091i: Equation of $T$)}
{\small{
\begin{center}
$\begin{array}{|c|c|c|c|c|c|c|}
\hline
\text{weight} & 1 & 2 & 3  & 4 & 5 &  6  \\
\hline
 & s_{23}=0 & s_{22}=a_{22}s_{13} & s_{12}=0 &  q_3=a_3 s_{13}^2& p_4=a_4 t_1 & 
q_1=a_1 p_1+b_1 s_{13}^2\\
\text{equation} & & & t_3=0& s_{11}=a_{11} s_{13}^2 &q_2=b_2 t_1 &  \\
& &  & & t_2=a_2 s_{13}^2 & & \\
\hline
\end{array}$
\end{center}
}}
In this table, the parameters $a_{22},\dots,b_1\in \mC$ are chosen generally.
Note that
\[
T\subset \mP(s_{13},t_1,p_1,r, p_2,p_3,u)=\mP(2,5,6,7^2,8,9).
\]

\noindent {\bf (1091ii: Base loci)}:
\begin{center}
\[
\begin{array}{|c|c|}
\hline
\{p_1=p_2=0\}_{|T} = \Bs |\sO(i)|_{|T}\cap \{p_1=p_2=0\}\, (2\leq i\leq 5) & \Bs |\sO(6)|_{|T} \\
\hline
\text{$u$-point}  & \text{$u$-point},\text{$p_2$-point} \\
\hline
\end{array}
\]
\end{center}
\vspace{3pt}

\noindent {\bf (1091iii:LPC)}\quad
\begin{center}
$\begin{array}{|c|c|c|c|}
\hline
\text{$u$-point} & \text{$p_2$-point}\\
\hline
1/9(2,7) & 1/7(1,6)\\
\hline
\end{array}$
\end{center}
\vspace{3pt}

\noindent {\bf (1091iv: $p_1$-chart)}
We may determine the singularities of $T$ on this chart in a very similar way to that for No.642.
The singular locus is
\begin{align*}
\{s_{13}=1, \ \ (b_1^2 - 2 b_1 a_3 + a_{11} a_3^2) + 2( a_1 b_1 -  a_1 a_3) p_1 + a_1^2 p_1^2=0,\\
p_3= -(1/a_3) (b_1 p_1 + a_1 p_1^2)\},
\end{align*}
which consists of two points $\sf{p}_2,\sf{q}_2$.
We see that $T$ has $1/2(1,1)$-singularities at ${\sf p}_2$ and ${\sf q}_2$ noting $s_{13}\not =0$. 

\vspace{5pt}

\noindent \fbox{\bf No.1181}\,

\vspace{3pt}

\noindent {\bf (1181i: Equation of $T$)}
{\small{
\begin{center}
$\begin{array}{|c|c|c|c|c|c|c|}
\hline
\text{weight} & 1 & 2 & 3  & 4 & 5 &  6  \\
\hline
 & s_{23}=0 & p_4=a_4 s_{13} & t_3=a_3 p_1 &  s_{11}=a_{11}p_2& q_2=c_2 p_3+d_2 t_1 &  
q_1=a_1 p_2 s_{13}\\
\text{equation} & &s_{22}=a_{22} s_{13} & s_{12}=a_{12} p_1& q_3=b_3 p_2+c_3 s_{13}^2 & \quad \quad \quad +e_2 s_{13}p_1& \quad +b_{1} s_{13}^2 \\
& &  & & t_2=a_2 p_2+b_2 s_{13}^2 & & \\
\hline
\end{array}$
\end{center}
}}
In the table, the parameters $a_4,\dots,b_1\in \mC$ are chosen generally.
Note that 
\[
T \subset \mP(s_{13},p_1,p_2,p_3,t_1,r,u)=\mP(2,3,4,5^2,7,12).
\]

\vspace{3pt}

\noindent {\bf (1181ii: Base loci)}
\begin{center}
\[
\begin{array}{|c|c|c|}
\hline
\{p_1=p_2=0\}_{|T} = \Bs |\sO(i)|_{|T}\cap \{p_1=p_2=0\}\, (i=2,3)  &  \Bs |\sO(5)|_{|T} & \Bs |\sO(6)|_{|T} \\
\,=\Bs |\sO(4)|_{|T}  \quad \quad \quad \quad \quad & &\\
\hline
\text{$u$-point}   & \text{$u$-point},\sf{p}_4,\sf{p}_2 & \text{$u$-point},\sf{p}_4 \\
\hline
\end{array}
\]
\end{center}
\begin{itemize}
\item
${\sf{p}_4}:=\{u=a_{11} a_2 b_3^2 p_2^3\}\in \mP(p_2,u).$ 
\item The point $\sf{p}_2$ satisfies $p_2\not =0$ and $s_{13}\not =0$. Detailed coordinates of ${\sf p}_2$ are complicated so are omitted.
\end{itemize}
\vspace{3pt}

\noindent {\bf (1181iii: LPC)}\quad
\begin{center}
$\begin{array}{|c|c|c|c|}
\hline
\text{$u$-point} & \sf{p}_4\\
\hline
1/12(5,7) & 1/4(1,3) \\
\hline
\end{array}$
\end{center}
\vspace{3pt}

\noindent {\bf (1181iv: $p_2$-chart)}
We may determine the singularities of $T$ on this chart in a very similar way to that for the $p_1$-chart of $T$ of No.642.
The singular locus consists of 
the points $\sf{p}_4$ and $\sf{p}_2$. The singularity of $T$ at ${\sf p}_4$ is determined by LPC, and 
$T$ has a $1/2 (1,1)$-singularity at $\sf{p}_2$ since $s_{13}\not=0$.

\vspace{5pt}

\noindent \fbox{\bf No.1185}\,

\vspace{3pt}

\noindent {\bf (1185i: Equation of $T$)}
{\small{
\begin{center}
$\begin{array}{|c|c|c|c|c|c|c|}
\hline
\text{weight} & 1 & 2 & 3  & 4 & 5 &  6  \\
\hline
 & s_{23}=0 & p_4=a_4 s_{13} & q_2=a_2 s_{12} &  q_1=a_1 p_1+b_1 s_{13}^2& r=a_0 p_2+b_0 t_2 &  
t_1=c_1 p_3 +d_1 s_{12}^2\\
\text{equation} & &q_3=a_3 s_{13} && s_{11}=a_{11} p_1+b_{11} s_{13}^2 & \quad \quad \quad +c_0 s_{13} s_{12}& \quad+e_1 s_{13}^2+f_1 p_1 s_{13} \\
& & s_{22} = a_{22} s_{13} & & t_3=c_3 p_1+d_3 s_{13}^2 & & \\
\hline
\end{array}$
\end{center}
}}
In the table, the parameters $a_4,\dots,f_1\in \mC$ are chosen generally.
Note that 
\[
T\subset \mP(s_{13}, s_{12}, p_1,p_2,t_2,p_3, u)
=\mP(2,3,4,5^2,6,8).
\]

\vspace{3pt}

\noindent {\bf (1185ii: Base loci)}
\begin{center}
\small{
\[
\begin{array}{|c|c|c|c|}
\hline
\{p_1=p_2=0\}_{|T} & \Bs |\sO(i)|_{|T}\cap \{p_1=p_2=0\}\, (i=2,3)  &  \Bs |\sO(4)|_{|T} =  \Bs |\sO(6)|_{|T} & \Bs |\sO(5)|_{|T} \\
\hline
\text{$u$-point}, {\sf v} & \text{$u$-point}&    \text{$u$-point},\sf{p}_5&  \text{$u$-point},\sf{p}_2,\sf{q}_2,\sf{r}_2 \\
\hline
\end{array}
\]
}
\end{center}
\begin{itemize}
\item
${\sf{p}_5}:=\{a_0 p_2+b_0 t_2=0\}\in \mP(p_2,t_2).$
\item The three points $\sf{p}_2,\sf{q}_2,\sf{r}_2$ satisfy $p_1\not =0$ and $s_{13}\not =0$. Detailed coordinates of them are complicated so are omitted.
\item
The point ${\sf v}$ satisfies $s_{13}\not = 0, s_{12}\not = 0, t_2\not = 0$, and $u\not =0$. Here we show that $T$ is smooth at ${\sf v}$.  
We see that the $u$-chart of $\Sigma(1)_{\mA}^o$ is smooth by the coordinate descriptions. 
Since $w(u)=8$, $w(t_2)=5$ and $t_2\not =0$ for ${\sf v}$, the $\mZ_8$-action on $\mA_{\Sigma(1)}^u$ is free along the inverse image of ${\sf v}$. Therefore $\Sigma(1)$ is smooth at ${\sf v}$.
Since ${\sf v}$ satisfies $s_{13}\not = 0, s_{12}\not = 0$, and $t_2\not = 0$, we see that ${\sf v}\not \in \Bs |\sO(i)|$ $(i=2,3,4,5,6)$. Therefore $T$ is smooth at ${\sf v}$.
\end{itemize}
\vspace{3pt}

\noindent {\bf (1185iii: LPC)}\quad 
\begin{center}
$\begin{array}{|c|c|c|c|}
\hline
\text{$u$-point} &  \sf{p}_5 \\
\hline
1/8(3,5) & 1/5(1,4)\\
\hline
\end{array}$
\end{center}
\vspace{3pt}

\noindent {\bf (1185iv: $p_1$-chart)}
We may determine the singularities of $T$ on this chart in a very similar way to that for No.642.
The singular locus consists of $\sf{p}_2,\sf{q}_2,\sf{r}_2$. We see that $T$ has $1/2(1,1)$-singularities at them noting $s_{13}\not =0$.

\vspace{5pt}

\noindent \fbox{\bf No.1186}\,

\vspace{3pt}

\noindent {\bf (1186i: Equation of $T$)}
{\small{
\begin{center}
$\begin{array}{|c|c|c|c|c|c|}
\hline
\text{weight} & 1 & 2 & 3  & 4 & 5  \\
\hline
 & s_{23}=0 & q_3=a_3 s_{13} & p_4=a_4 t_3 &  q_1=a_1 p_1+b_1 s_{13}^2& r=a_0 p_2+b_0 t_1  \\
\text{equation} & &s_{22}=a_{22} s_{13} &q_2 =a_2 t_3& s_{11}=a_{11} p_1+b_{11} s_{13}^2 & \quad \quad \quad +c_0 s_{13} t_3 \\
& &  & s_{12}=a_{12} t_3& t_2=b_2 p_1+c_2 s_{13}^2 & \\
\hline
\end{array}$
\end{center}
}}
In the table, the parameters $a_3,\dots,c_0\in \mC$ are chosen generally.
Note that 
\[
T\subset \mP(s_{13}, t_3,p_1,p_2,t_1,p_3,u)=\mP(2,3,4,5^2,6,7).
\]

\vspace{3pt}

\noindent {\bf (1186ii: Base loci)}
\begin{center}
{\small{
\[
\begin{array}{|c|c|c|c|c|}
\hline
\{p_1=p_2=0\}_{|T} & \Bs |\sO(2)|_{|T}\cap \{p_1=p_2=0\} & \Bs |\sO(3)|_{|T}\cap \{p_1=p_2=0\}  &  \Bs |\sO(4)|_{|T} &  \Bs |\sO(5)|_{|T} \\
\hline
\text{$u$-point}, \sf{p}_3, {\sf v} & \text{$u$-point}, \sf{p}_3 &    \text{$u$-point}&   \text{$u$-point},\sf{p}_3,\sf{p}_5& \text{$u$-point},\sf{p}_3,\sf{p}_2, \sf{q}_2 \\
\hline
\end{array}
\]
}} 
\end{center}
\begin{itemize}
\item ${\sf{p}_3}:=\{p_3+a_4 t_3^2=0\} \in \mP(t_3,p_3).$
\item ${\sf{p}_5}:=\{a_0 p_2+b_0 t_1=0\}\in \mP(p_2,t_1).$
\item The two points $\sf{p}_2,\sf{q}_2$ satisfy $p_1\not =0$ and $s_{13}\not =0$.
\item
The point ${\sf v}$ satisfies $s_{13}\not = 0, t_1\not = 0, t_3\not = 0$, and $u\not=0$.
We can show that $T$ is smooth at ${\sf v}$ as we have shown the same claim for No.1185.
\end{itemize}
\vspace{3pt}

\noindent {\bf (1186iii: LPC)}\quad
\begin{center}
$\begin{array}{|c|c|c|c|}
\hline
\text{$u$-point} & \sf{p}_5 & \sf{p}_3\\
\hline
1/8(3,5) & 1/5(1,4)  & 1/3(1,2)\\
\hline
\end{array}$
\end{center}
\vspace{3pt}

\noindent {\bf (1186iv: $p_1$-chart)}
We may determine the singularities of $T$ on this chart in a very similar way to that for No.642.
The singular locus consists of $\sf{p}_2,\sf{q}_2$. We see that $T$ has $1/2(1,1)$-singularities at them noting $s_{13}\not =0$. 

\vspace{5pt}

\noindent \fbox{\bf No.1218}\,

\vspace{3pt}

\noindent {\bf (1218i: Equation of $T$)}
{\small{
\begin{center}
$\begin{array}{|c|c|c|c|c|c|}
\hline
\text{weight} & 1 & 2 & 3  & 4 & 5  \\
\hline
 & s_{23}=0 & q_3=a_3 t_2 & q_2=a_2 t_1 &  q_1=a_1 p_1+b_1 t_2^2& p_4=a_4 p_2+b_4 r  \\
\text{equation} &t_3=0 &s_{13}=a_{13} t_2 &s_{12} =a_{12} t_1& s_{11}=a_{11} p_1+b_{11} t_2^2 & \quad +c_4 u+d_4 t_2 t_1 \\
& & s_{22}=a_{22}t_2 & &  & \\
\hline
\end{array}$
\end{center}
}}
In the table, the parameters $a_3,\dots,d_4\in \mC$ are chosen generally. Note that 
\[
T\subset \mP(t_2,t_1,p_1,p_2,r,u,p_3)=\mP(2,3,4,5^3,6).
\]

\vspace{3pt}

\noindent {\bf (1218ii: Base loci)}
\begin{center}
\[
\begin{array}{|c|c|c|}
\hline
\{p_1=p_2=0\}_{|T} = \Bs |\sO(i)|_{|T}\cap \{p_1=p_2=0\}\,(i=2,3) &  \Bs |\sO(4)|_{|T} &  \Bs |\sO(5)|_{|T} \\
\hline
\sf{p}_5, \sf{q}_5, {\sf v}, {\sf w}  & \sf{p}_5, \sf{q}_5,  \text{$p_2$-point}&  \sf{p}_2, \sf{q}_2 \\
\hline
\end{array}
\]
\end{center}
\begin{itemize}
\item The two points $\sf{p}_5,\sf{q}_5$ constitute the locus 
\begin{equation}
\label{eq:p5q5}
\{-r^2-b_4 ru-c_4 u^2=0\}\subset \mP(r,u).
\end{equation}
\item 
The two points ${\sf v}, {\sf w}$ satisfies $t_1\not = 0, t_2\not = 0$, and $u\not =0$. 

Here we show that $T$ is smooth at ${\sf v}$ and ${\sf w}$.  We see that the $u$-chart of $\Sigma(3)_{\mA}$ is smooth by the coordinate descriptions of the $u$-chart of $\Sigma_{\mA}$ as in the subsection \ref{Charts} and {\bf (1218i)}. 
Since $w(u)=5$, $w(t_2)=2$ and $t_2\not =0$ for ${\sf v}$ and ${\sf w}$, the $\mZ_5$-action on $\mA_{\Sigma(3)}^u$ is free along the inverse images of ${\sf v}$ and ${\sf w}$. Therefore $\Sigma(3)$ is smooth at ${\sf v}$ and ${\sf w}$.
Since ${\sf v}$ and ${\sf w}$ satisfies $t_1\not = 0, t_2\not = 0$, and $w(t_1)=3$, $w(t_2)=2$, we see that ${\sf v}, {\sf w}\not \in \Bs |\sO(i)|$ $(i=4,5)$. Therefore $T$ is smooth at ${\sf v}$ and ${\sf w}$.
\item
 The two points $\sf{p}_2$, $\sf{q}_2$ in 
$\mP(t_2,p_1,p_3)$ satisfies $t_2\not =0$.
\end{itemize}
\vspace{3pt}

\noindent {\bf (1218iii: LPC)}\quad
\begin{center}
$\begin{array}{|c|c|c|c|}
\hline
\text{$p_2$-point}  & \sf{p}_5, \sf{q}_5 \\
\hline
1/5(1,4) & 1/5(2,3) \\
\hline
\end{array}$
\end{center}
\vspace{3pt}

\noindent {\bf (1218iv: $p_1$-chart)}
We may determine the singularities of $T$ on this chart in a very similar way to that for No.642.
The singular locus consists of $\sf{p}_2,\sf{q}_2$. We see that $T$ has $1/2(1,1)$-singularities at them noting 
$t_2\not=0$.

\vspace{5pt}


\noindent \fbox{\bf No.1253}\,

\vspace{3pt}

\noindent {\bf (1253i: Equation of $T$)}
{\small{
\begin{center}
$\begin{array}{|c|c|c|c|c|}
\hline
\text{weight} & 1 & 2 & 3  & 4  \\
\hline
 & s_{13}=0 & s_{11}=a_{11} t_3 & p_4=a_4 t_2 &  q_1=b_1 p_1+c_1 p_2+d_1 t_3^2  \\
\text{equation} &s_{23}=0 &s_{12}=a_{12} t_3 &q_3=a_3 t_2 & q_2=a_2 p_1+b_2 p_2+c_2 t_3^2  \\
& & s_{22}=a_{22}t_3 & t_1=a_1 t_2 &  \\
\hline
\end{array}$
\end{center}
}}
In this table, the parameters $a_{11},\dots,c_2\in \mC$ are chosen generally.
Note that 
\[
T\subset \mP(t_3,t_2,p_1,p_2,p_3,r,u)=\mP(2,3,4^2,5^2,7).
\]

\vspace{3pt}

\noindent {\bf (1253ii: Base loci)}
\begin{center}
\[
\begin{array}{|c|}
\hline
\{p_1=p_2=0\}_{|T} = \Bs |\sO(i)|_{|T}\cap \{p_1=p_2=0\}\, (i=2,3)   \\
\,=  \Bs |\sO(4)|_{|T} \quad \quad \quad \quad \quad \\
\hline
 \text{$u$-point}     \\
\hline
\end{array}
\]
\end{center}
\vspace{3pt}

\noindent {\bf (1253iii: LPC)}\quad
\begin{center}
$\begin{array}{|c|}
\hline
\text{$u$-point}\\
\hline
1/7(2,5) \\
\hline
\end{array}$
\end{center}

\vspace{3pt}

\noindent {\bf (1253iv: $p_1$-chart)}
We may determine the singularities of $T$ on this chart in a very similar way to that for No.642.
The singular locus consists of the following:

\begin{itemize}
\item 
$\{b_1p_1^2 + (a_2+c_1) p_1 p_2 + b_2 p_2^2 = 0\}\subset \mP(p_1,p_2)$,
which consists of two points $\sf{p}_4,\sf{q}_4$.

By LPC at these two points, we see that $T$ has $1/4(1,3)$-singularities there;
\item
two points $\sf{p}_2,\sf{q}_2$ (the coordinates are complicated so are omitted). 

We see that $T$ has $1/2(1,1)$-singularities at them noting 
$t_3\not =0$. 
\end{itemize}

\vspace{5pt}

\noindent \fbox{\bf No.1413}\,

\vspace{3pt}

\noindent {\bf (1413i: Equation of $T$)}
{\small{
\begin{center}
$\begin{array}{|c|c|c|c|c|}
\hline
\text{weight} & 1 & 2 & 3  & 4  \\
\hline
\text{equation} & s_{13}=0 & q_3=a_3 t_3 & p_4=a_4 t_1+b_4 t_2 &  r=a_0 p_1+b_0 p_2+c_0 t_3^2  \\
 &s_{23}=0 &s_{11}=a_{11} t_3 &q_1=a_1 t_1+b_1 t_2 &   \\
& & s_{12}=a_{12}t_3 & q_2=a_2 t_1+b_2 t_2 &  \\
& & s_{22}=a_{22} t_3 & &\\
\hline
\end{array}$
\end{center}
}}
In this table, the parameters $a_3,\dots, c_0\in \mC$ are chosen generally. Note that 
\[
T\subset \mP(t_3,t_1,t_2,p_1,p_2,p_3,u)=\mP(2,3^2,4^2,5^2).
\]

\vspace{3pt}

\noindent {\bf (1413ii: Base loci)}
\begin{center}
\[
\begin{array}{|c|c|c|}
\hline
\{p_1=p_2=0\}_{|T} = \Bs |\sO(2)|_{|T}\cap \{p_1=p_2=0\} & \Bs |\sO(3)|_{|T} &  \Bs |\sO(4)|_{|T} \\
\hline
\text{$u$-point}, \sf{p}_3, \sf{q}_3  & \text{$u$-point}, \sf{p}_4, \sf{p}_2, \sf{q}_2  &   \text{$u$-point},  \sf{p}_3, \sf{q}_3 \\
\hline
\end{array}
\]
\end{center} 
\begin{itemize}
\item The two points $\sf{p}_3,\sf{q}_3$ constitute the locus
\begin{equation*}
\label{eq:1413index 3}
\{a_2 t_1^2 - a_1 t_1 t_2 + b_2 t_1 t_2 - b_1 t_2^2\}\subset \mP(t_1,t_2).
\end{equation*}
\item 
${\sf{p}_4}:=\{a_0 p_1+b_0 p_2=0\}\in \mP(p_1,p_2).$
\item
The two points $\sf{p}_2,\sf{q}_2$ satisfy $p_1\not =0$ and $t_3\not =0$.
\end{itemize}
\vspace{3pt}

\noindent {\bf (1413iii: LPC)}\quad
\begin{center}
$\begin{array}{|c|c|c|c|}
\hline
\text{$u$-point}  & \sf{p}_4 & \sf{p}_3,\sf{q}_3\\
\hline
1/5(2,3) & 1/4(1,3) & 1/3(1,2)\\
\hline
\end{array}$
\end{center} 
\vspace{3pt}

\noindent {\bf (1413iv: $p_1$-chart)}
We may determine the singularities of $T$ on this chart in a very similar way to that for No.642.
The singular locus consists of $\sf{p}_2,\sf{q}_2$. We see that $T$ has $1/2(1,1)$-singularities at them noting $t_3\not =0$.

\vspace{5pt}

\noindent \fbox{\bf No.2422}\,

\vspace{3pt}

\noindent {\bf (2422i: Equation of $T$)} 
{\small{
\begin{center}
$\begin{array}{|c|c|c|c|c|}
\hline
\text{weight} & 1 & 2 & 3  & 4  \\
\hline
\text{equation} & t_3=0 & q_3=a_3 p_1+b_3 t_2 & p_4=a_4 p_2+b_4 t_1 &  q_1=a_1 p_3+b_1 p_1^2+c_1p_1t_2+d_1 t_2^2  \\
& &s_{13}=a_{13} p_1+b_{13} t_2 &q_2=a_2 p_2+b_2 t_1 &  s_{11}=a_{11} p_3+b_{11} p_1^2+c_{11}p_1t_2+d_{11} t_2^2\\
& & s_{22}=a_{22}p_1+b_{22} t_2& s_{12}=a_{12} p_2+b_{12} t_1 &  \\
\hline
\end{array}$
\end{center}
}}
In this table, the parameters $a_3,\dots, d_{11}\in \mC$ are chosen generally. Note that
\[
T\subset \mP(p_1,t_2,p_2,t_1,p_3,r,u)=\mP(2^2,3^2,4,5,7).
\]

\vspace{3pt}

\noindent {\bf (2422ii: Base loci)}\quad
\begin{center}
$\begin{array}{|c|c|c|}
\hline
\{p_1=p_2=0\}_{|T} & \Bs |\sO(2)|_{|T} &  \Bs |\sO(3)|_{|T} \\
\hline
\text{$u$-point} &  \text{$u$-point}, \sf{p}_3  &   \text{$u$-point},  \sf{p}_2, \sf{q}_2, \sf{r}_2,\sf{s}_2 \\
\hline
\end{array}$ 
\end{center}
\begin{itemize}
\item
${\sf{p}_3}:=\{a_2 p_2+b_2 t_1=0\}\in \mP(p_2,t_1).$
\item
The four points $\sf{p}_2,\sf{q}_2,\sf{r}_2,\sf{s}_2$ are contained in the $p_1$-chart.
\end{itemize}
\vspace{3pt}

\noindent {\bf (2422iii: LPC)}\quad
\begin{center}
$\begin{array}{|c|c|}
\hline
\text{$u$-point} &  \sf{p}_3\\
\hline
1/7(2,5) & 1/3(1,2)\\
\hline
\end{array}$
\end{center}
\vspace{3pt}

\noindent {\bf (2422iv: $p_1$-chart)}
We may determine the singularities of $T$ on this chart in a very similar way to that for No.642.
The singular locus consists of $\sf{p}_2,\sf{q}_2,{\sf r}_2,{\sf s}_2$. We see that $T$ has $1/2(1,1)$-singularities at them noting $w(p_1)=2$.
\vspace{5pt}


\subsection{Case $h^0(\sO_{\mP_X}(1))=2$}~
\label{Case2}

$\empty$

\vspace{5pt}

\noindent \fbox{\bf No.4850}\,

\vspace{3pt}

\noindent {\bf (4850i: Equation of $C$)}
{\small{
\begin{center}
$\begin{array}{|c|c|c|c|c|c|c|c|}
\hline
\text{weight} & 1 & 2 & 3  & 4 & 5 &  6 &  7 \\
\hline
\text{equation} & p_4=0 & s_{13}=0 & s_{12}=0 & s_{11}=a_{11} p_1 & q_2=a_2 p_2 & q_1=a_1 p_3+b_1 t_2 & 
r=a_0 t_1\\
&s_{23}=0 & s_{22}=0 & & q_3=a_3 p_1 &t_3=b_3 p_2 & & \\
\hline
\end{array}$
\end{center}
}}
In this table, the parameters $a_{11},\dots,a_0\in \mC$ are chosen generally.
Note that \[
C\subset \mP(p_1,p_2,p_3,t_2,t_1,u)=\mP(4,5,6^2,7,13).
\]

\vspace{3pt}

\noindent {\bf (4850ii: Base loci)}\quad 
\begin{center}
$\begin{array}{|c|c|}
\hline
\{p_1=0\}_{|C} =\Bs |\sO(i)|_{|C}\, (i=4,6) &  \Bs |\sO(i)|_{|C}\, (i=5,7) \\
\hline
\text{$u$-point}      &  \text{$u$-point}, \sf{p}_2\\
\hline
\end{array}$ 
\end{center}
\[
{\sf{p}_2}:=\{p_1=1, t_2=-1/b_1 (a_1+a_3)p_3, a_{11} +p_3^2=0\}\subset \mP(p_1,p_3,t_2).
\]
\vspace{3pt}

\noindent {\bf (4850iii: LPC)}\quad
\begin{center}
$\begin{array}{|c|}
\hline
\text{$u$-point} \\
\hline
1/13(6,7)  \\
\hline
\end{array}$
\end{center}
\vspace{3pt}

\noindent {\bf (4850iv: $p_1$-chart)}
We may determine the singularities of $T$ on this chart in a very similar way to that for No.642.
The singular locus consists of $\sf{p}_2$. We see that $T$ has a $1/2(1,1)$-singularity at ${\sf p}_2$ noting $p_3\not=0$.

\vspace{5pt}

\noindent \fbox{\bf No.4938}\,

\vspace{3pt}

\noindent {\bf (4938i: Equation of $C$)}
{\small{
\begin{center}
$\begin{array}{|c|c|c|c|c|c|c|}
\hline
\text{weight} & 1 & 2 & 3  & 4 & 5 &  6 \\
\hline
\text{equation} & p_4=0 & s_{13}=0 & s_{12}=a_{12}p_1 & q_2=a_2 p_2 & q_1=a_1 p_3+b_1 t_2 & r=a_0 t_1+b_0 p_1^2 \\
&s_{23}=0 & s_{22}=0 & q_3=a_3 p_1 &s_{11}=a_{11}p_2 & & \\
& & & &  t_3=b_3 p_2 & &\\
\hline
\end{array}$
\end{center}
}}
In this table, the parameters $a_{12},\dots,b_0\in \mC$ are chosen generally.
Note that 
\[
C\subset \mP(p_1,p_2,p_3,t_2,t_1,u)=\mP(3,4,5^2,6,11).
\]

\vspace{3pt}

\noindent {\bf (4938ii: Base loci)}\quad 
\begin{center}
$\begin{array}{|c|c|}
\hline
\{p_1=0\}_{|C} = \Bs |\sO(i)|_{|C} (i=3,6) &  \Bs |\sO(i)|_{|C} (i=4,5)\\
\hline
\text{$u$-point}  &  \text{$u$-point}, \sf{p}_3 \\
\hline
\end{array}$ 
\end{center}
\[
{\sf{p}_3}:=
\{t_1=(a_{12} a_3-b_0)/a_0\, p_1^2\}\in \mP(p_1,t_1).
\]
\vspace{3pt}

\noindent {\bf (4938iii: LPC)}\quad
\begin{center}
$\begin{array}{|c|c|}
\hline
\text{$u$-point} &  \sf{p}_3\\
\hline
1/11(5,6)  & 1/3(1,2)\\
\hline
\end{array}$
\end{center}
\vspace{5pt}

\noindent \fbox{\bf No.5202}\,

\vspace{3pt}

\noindent {\bf (5202i: Equation of $C$)}
{\small{
\begin{center}
$\begin{array}{|c|c|c|c|c|c|}
\hline
\text{weight} & 1 & 2 & 3  & 4 & 5 \\
\hline
\text{equation} & p_4=0 & q_3=a_3 p_1 & q_2=a_2p_2 & q_1=a_1 p_3+b_1 t_2+c_1 p_1^2 & r=a_0 t_1+b_0 p_1 p_2\\
&s_{23}=0 & s_{22}=a_{22}p_1 & s_{12}=a_{12} p_2 &s_{11}=a_{11}p_3+b_{11}t_2+c_{11} p_1^2 & \\
& & s_{13}=a_{13} p_1& t_3=a_0 p_2&   &\\
\hline
\end{array}$
\end{center}
}}
In this table, the parameters $a_3,\dots,b_0\in \mC$ are chosen generally.
Note that 
\[
C\subset \mP(p_1,p_2,p_3,t_2,t_1,u)=\mP(2,3,4^2,5,9).
\]

\vspace{3pt}

\noindent {\bf (5202ii: Base loci)}\quad
\begin{center}
$\begin{array}{|c|c|}
\hline
\{p_1=0\}_{|C} =
\Bs |\sO(i)|_{|C}\, (i=2,4)&  \Bs |\sO(i)|_{|C}\, (i=3,5) \\
\hline
\text{$u$-point} & \text{$u$-point}, \sf{p}_2, \sf{q}_2 \\
\hline
\end{array}$
\end{center}
The two points $\sf{p}_2,\sf{q}_2$ are contained in the $p_1$-chart.

\vspace{3pt}

\noindent {\bf (5202iii: LPC)}\quad
\begin{center}
$\begin{array}{|c|}
\hline
\text{$u$-point} \\
\hline
1/9(4,5)  \\
\hline
\end{array}$
\end{center}
\vspace{3pt}

\noindent {\bf (5202iv: $p_1$-chart)}
We may determine the singularities of $T$ on this chart in a very similar way to that for No.642.
The singular locus consists of $\sf{p}_2,\sf{q}_2$. We see that $T$ has $1/2(1,1)$-singularities at them noting $w(p_1)=2$.

\vspace{5pt}


\noindent \fbox{\bf No.5859}

\vspace{3pt}

\noindent {\bf (5859i: Equation of $C$)}
{\small{
\begin{center}
$\begin{array}{|c|c|c|c|c|}
\hline
\text{weight} & 1 & 2 & 3  & 4 \\
\hline
\text{equation} & t_3=0 & p_4=a_4 p_2+b_4 t_2 & q_2=a_2p_3+b_2 t_1 & q_1=a_1 p_2^2+b_1 p_2t_2+c_1 t_2^2 \\
&p_1=0 & q_3=a_3 p_2+b_3 t_2 & s_{12}=a_{12} p_3+b_{12} t_1 &s_{11}=a_{11} p_2^2+b_{11} p_2t_2+c_{11} t_2^2 \\
& s_{23}=0 & s_{13}=a_{13} p_2+b_{13}t_2&    &\\
& & s_{22}=a_{22} p_2+b_{22} t_2 & &\\
\hline
\end{array}$
\end{center}
}}
In this table, the parameters $a_3,\dots,b_0\in \mC$ are chosen generally.
Note that 
\[
C\subset \mP(p_2,t_2,p_3,t_1,r,u)=\mP(2^2,3^2,5,8).
\]

\vspace{3pt}

\noindent {\bf (5859ii: Base loci)}\quad
\begin{center}
$\begin{array}{|c|c|}
\hline
\{p_2=0\}_{|C} = \Bs |\sO(i)|_{|C}\, (i=2,4)&  \Bs |\sO(3)|_{|C} \\
\hline
\text{$u$-point} &    \text{$u$-point}, \sf{p}_2, \sf{q}_2 \\
\hline
\end{array}$
\end{center}
The two point $\sf{p}_2,\sf{q}_2$ are contained in the $p_2$-chart.

\vspace{3pt}

\noindent {\bf (5859iii: LPC)}\quad 
\begin{center}
$\begin{array}{|c|}
\hline
\text{$u$-point} \\
\hline
1/8(3,5)  \\
\hline
\end{array}$
\end{center}
\vspace{3pt}

\noindent {\bf (5859iv: $p_2$-chart)}
We may determine the singularities of $T$ on this chart in a very similar way to that for No.642.
The singular locus consists of $\sf{p}_2,\sf{q}_2$. We see that $T$ has $1/2(1,1)$-singularities at them noting $w(p_2)=2$.

\vspace{5pt}

\noindent \fbox{\bf No.5866}

\vspace{3pt}

\noindent {\bf (5866i: Equation of $C$)}
{\small{
\begin{center}
$\begin{array}{|c|c|c|c|}
\hline
\text{weight} & 1 & 2 & 3  \\
\hline
 & p_4=0 & q_3=a_3 p_1+b_3 p_2 & q_1=a_1p_3+b_1 t_2  \\
&s_{13}=0 & s_{11}=a_{11} p_1+b_{11} p_2 & q_2=a_2 p_3+b_2 t_2  \\
\text{equation}& s_{23}=0 & s_{12}=a_{12} p_1+b_{12}p_2&t_1=c_1 p_3+d_1 t_2  \\
& & s_{22}=a_{22} p_1+b_{22} p_2 & \\
& & t_3=a_0 p_1+b_0 p_2&\\
\hline
\end{array}$
\end{center}
}}
In this table, the parameters $a_3,\dots,d_1\in \mC$ are chosen generally.
Note that 
\[
C\subset \mP(p_1,p_2,p_3,t_2,r,u)=\mP(2^2,3^2,4,7).
\]

\vspace{3pt}

\noindent {\bf (5866ii: Base loci)}\quad
\begin{center}
$\begin{array}{|c|c|}
\hline
\{p_1=p_2=0\}_{|C} = \Bs |\sO(2)|_{|C} &  \Bs |\sO(3)|_{|C} \\
\hline
\text{$u$-point} &   \text{$u$-point}, \sf{p}_2, \sf{q}_2,\sf{r}_2 \\
\hline
\end{array}$
\end{center}
The three points $\sf{p}_2,\sf{q}_2,\sf{r}_2$ are contained in both of the $p_1$- and $p_2$-charts.

$\{p_1=0\}_{|C}$ consists of a finite number of points.
 
\vspace{3pt}

\noindent {\bf (5866iii: LPC)}\quad 
\begin{center}
$\begin{array}{|c|}
\hline
\text{$u$-point} \\
\hline
1/7(3,4)  \\
\hline
\end{array}$
\end{center}
\vspace{5pt}

\noindent {\bf (5866iv: $p_1$-chart)}
We may determine the singularities of $T$ on this chart in a very similar way to that for No.642.
The singular locus consists of $\sf{p}_2,\sf{q}_2,\sf{r}_2$. We see that $T$ has $1/2(1,1)$-singularities at them noting $w(p_1)=2$.

\vspace{5pt}


\noindent \fbox{\bf No.6860}

\vspace{3pt}

\noindent {\bf (6860i: Equation of $C$)}
{\small{
\begin{center}
$\begin{array}{|c|c|c|c|}
\hline
\text{weight} & 1 & 2 & 3  \\
\hline
 & p_4=0 & s_{11}=a_{11}p_1+b_{11}p_2+c_{11}t_3& t_1=a_1p_3+b_1 t_2  \\
&q_3=0 & s_{12}=a_{12} p_1+b_{12} p_2+c_{12}t_3 & r=a_0 p_3+b_0 t_2  \\
\text{equation}& s_{13}=0 & s_{22}=a_{22} p_1+b_{22}p_2+c_{22}t_3&  \\
&s_{23}=0 & q_1=c_1 p_1+d_1 p_2+e_1 t_3 & \\
& & q_2=a_2 p_1+b_2 p_2+c_2 t_3&\\
\hline
\end{array}$
\end{center}
}}
In this table, the parameters $a_{11},\dots,b_0\in \mC$ are chosen generally.
Note that 
\[
C\subset \mP(p_1,p_2,t_3,p_3,t_2,u)=\mP(2^3,3^2,5).
\]

\vspace{3pt}

\noindent {\bf (6860ii: Base loci)}\quad
\begin{center}
$\begin{array}{|c|c|}
\hline
\{p_1=p_2=0\}_{|C} = \Bs |\sO(2)|_{|C} &  \Bs |\sO(3)|_{|C} \\
\hline
\text{$u$-point} &   \text{$u$-point}, \sf{p}_2, \sf{q}_2,\sf{r}_2,\sf{s}_2 \\
\hline
\end{array}$ 
\end{center}
The four points $\sf{p}_2,\sf{q}_2,\sf{r}_2,\sf{s}_2$ are contained in both of the $p_1$- and $p_2$-charts.

$\{p_1=0\}_{|C}$ consists of a finite number of points.

\vspace{3pt}

\noindent {\bf (6860iii: LPC)}\quad 
\begin{center}
$\begin{array}{|c|}
\hline
\text{$u$-point} \\
\hline
1/5(2,3)  \\
\hline
\end{array}$
\end{center}
\vspace{3pt}

\noindent {\bf (6860iv: $p_1$-chart)}
We may determine the singularities of $T$ on this chart in a very similar way to that for No.642.
The singular locus consists of $\sf{p}_2,\sf{q}_2,\sf{r}_2,\sf{s}_2$. We see that $T$ has $1/2(1,1)$-singularities at them noting $w(p_1)=2$.

\vspace{5pt}

\noindent \fbox{\bf No.6865}\,

\vspace{3pt}

\noindent {\bf (6865i: Equation of $C$)}
{\small{
\begin{center}
$\begin{array}{|c|c|c|}
\hline
\text{weight} & 1 & 2   \\
\hline
 & q_3=0 & s_{11}=a_{11}p_1+b_{11}p_2+c_{11}p_4  \\
&t_3=0 & s_{12}=a_{12} p_1+b_{12} p_2+c_{12}p_4  \\
& s_{13}=0 & s_{22}=a_{22} p_1+b_{22}p_2+c_{22}p_4 \\
\text{equation}&s_{23}=0 & t_1=a_1 p_1+b_1 p_2+c_1 p_4  \\
& & t_2=a_2 p_1+b_2 p_2+c_2 p_4\\
& & q_1=d_1 p_1+e_1 p_2+f_1 p_4\\
& & q_2=d_2 p_1+e_2 p_2+f_2 p_4\\
\hline
\end{array}$
\end{center}
}}
In this table, the parameters $a_{11},\dots,f_2\in \mC$ are chosen generally.
Note that 
\[
C\subset \mP(p_1,p_2,p_4,p_3,r,u)=\mP(2^3,3^2,4).
\]
\vspace{3pt}

\noindent {\bf (6865ii: Base loci)}\quad
\begin{center}
$\begin{array}{|c|}
\hline
\{p_1=p_2=0\}_{|C} = \Bs |\sO(2)|_{|C} \\
\hline
\text{$u$-point}   \\
\hline
\end{array}$
\end{center}
\vspace{3pt}

$\{p_1=0\}_{|C}$ consists of a finite number of points.

\vspace{3pt} 

\noindent {\bf (6865iii: LPC)}\quad 
\begin{center}
$\begin{array}{|c|}
\hline
\text{$u$-point} \\
\hline
1/4(1,3)  \\
\hline
\end{array}$
\end{center}
\vspace{3pt}

\noindent {\bf (68651iv: $p_1$-, $p_2$-chart)}
We may determine the singularities of $T$ on this chart in a very similar way to that for No.642.
The singular locus consists of five points $\sf{p}_2,\sf{q}_2,\sf{r}_2,\sf{s}_2,\sf{t}_2$. We see that $T$ has $1/2(1,1)$-singularities at them noting 
$w(p_1)=2$.

\vspace{10pt}

\subsection{Case $h^0(\sO_{\mP_X}(1))\geq 3$}~
\label{Case3}

\begin{center}
\[
\begin{array}{|c|c|c|}
\hline
\text{No.} & \text{Base loci} & \text{$u$-point}\\
\hline
11004 & \Bs |\sO(i)|_{|T}\, (i=1,2,3,4)=\text{the $u$-point} & 1/7(3,4)\\
\hline
16227 & \Bs |\sO(i)|_{|T}\, (i=1,2)=\text{the $u$-point} & 1/5(2,3)\\
\hline
\end{array}
\]
\end{center}

\section{\bf Projections of prime $\mQ$-Fano threefolds}
\label{sec:proj}
It is an interesting problem to study various projections of $X$ from the view point of $\Sigma^{12}_{\mP}$ or $\Sigma^{13}_{\mP}$.
Here we study only a few of them to complete our proof of the main theorem \ref{thm:main}.

\subsection{Type I Tom projection}
In this subsection, we treat $\Sigma^{13}_{\mA}$; the treatment of $\Sigma^{14}_{\mA}$ is similar, so we leave it to the readers.
We investigate the projection $\Sigma^{13}_{\mA}\to \mA(\bm{p},p_4,\bm{q},r,S',\bm{t})$ (or the elimination of the coordinate $u$) and show Theorem \ref{thm:main} (1-2) and (1-3).

By the study of the $u$-chart as in the subsection \ref{Charts}, $\bm{q},r,S',\bm{t}$ are local orbifold coordinates of $\Sigma^{12}_{\mP}$ at the $u$-point. For any class of  $\Sigma^{12}_{\mP}$ as in Theorem \ref{thm:main}, 
the $u$-point is a Type I center since it satisfies the condition of Definition \ref{defn:typeI} (1).
We may check that the closure of the image of the projection is equal to the affine scheme $\Sigma_{\rm{Tom}}$ defined by
the five $4\times 4$ Pfaffians of the skew-symmetric matrix (\ref{eq:Pf}), which is codimension three by Lemma \ref{lem:Tom} (2).
Therefore the projection $\Sigma^{12}_{\mP}\dashrightarrow \Sigma_{\rm{Tom}\mP}$ is a Type I Tom projection since 
$\Sigma_{\rm{Tom}\mP}$ satisfies the condition of Definition \ref{defn:typeI} (2), where $\Sigma_{\rm{Tom}\mP}$ is the weighted projectivization of $\Sigma_{\rm{Tom}}$ for which the weights of $\bm{q},r,S',\bm{t}$ is the same as those for $\Sigma^{12}_{\mP}$.
Moreover, $\Sigma_{\rm{Tom}}$ contains $\{p_1=p_2=p_3=p_4=0\}=\mP(\bm{q},r,S',\bm{t})$, which is the image of the exceptional divisor of the weighted blow-up of $\Sigma^{12}_{\mP}$ at the $u$-point with the weights of local orbinates being given by the weights of global coordinates.

In the same way as above, we see that, for a $\mQ$-Fano threefold $X$ in any class as in Theorem \ref{thm:main} except No.360, No.1218, and No.24078, the $u$-point is a Type I center and the projection from the $u$-point is a Type I Tom projection. By a numerical reason, this is the unique Type I Tom projection for $X$ as indicated by \cite{grdb}.

Finally we consider the key variety $\Sigma_{\mP}^{12}$ of No.1218.
Let's us take a constant $\alpha\in \mC$. Then, by elementary calculations, we may check that
the automorphism of $\mA^{18}$ defined by  
\[
\bm{p}\mapsto \bm{p}+\alpha A\bm{t}, p_4\mapsto p_4-2\alpha r-\alpha^2 u, r\mapsto r+\alpha u
\]
induces an automorphism of the affine variety $\Sigma^{14}_{\mA}$. Moreover, in case of No.1218, this is graded since it holds that $w(p_4)=w(r)=w(u)$ in this case. For No.1218, we set $\alpha=-(-b_4\pm \sqrt{b_4^2-4c_4})/2$ for the points ${\sf p}_5, {\sf q}_5$, where these two points are $1/5 (1,2,3)$-singularities of $X$ (see (\ref{eq:p5q5}) in the item (1218ii: Base loci)). Then we may assume that ${\sf p}_5$ or ${\sf q}_5$ is the $u$-point of $\Sigma^{12}_{\mP}$.
Therefore, the projections from ${\sf p}_5$ and ${\sf q}_5$ of $\SigmaP$ and $X$ are Type I Tom projections as we have seen above.

\subsection{On Type II$_1$ projection}
\label{sub:TypeII1}
In this subsection, we investigate the projection $\Sigma^{13}_{\mA}\to \mA(p_2,p_4,\bm{q},r,u,S,\bm{t})$ (or the elimination of the coordinates $p_1$ and $p_3$) and show Theorem \ref{thm:main} (1-4).

We see that the closure of the image of this projection is equal to 
\[
\Xi:=\{up_2=F_2, up_4=F_4\}.
\]
In a weighted projective setting with a set of weights of the coordinates, 
the center of the corresponding projection $\Sigma^{12}_{\mP}\dashrightarrow \mP(p_2,p_4,\bm{q},r,u,S,\bm{t})$ is the $p_1$-point since $s_{33}=1$ for $\Sigma^{13}_{\mA}$.

For a $\mQ$-Fano threefold $X$ of No.360, the weights of the equations $up_2=F_2,\, up_4=F_4$ are $16,\,14$ respectively. Therefore
we have shown Theorem \ref{thm:main} (1-3).

\section{{\bf Affine variety $\Upsilon^{14}_{\mA}$}}
\label{sec:Up}
The projection as in the subsection \ref{sub:TypeII1} should be a Type II$_1$ projection by a numerical reason as is indicated in the database \cite{grdb}. Therefore following the general theory of Type II$_1$ projection as in \cite{II1}, we investigate this projection. It turns out, however, that this projection does not coincide with a Type II$_1$ projection though there are similarities with it (we may say that it depends on the definition of Type II$_1$ projection). This projection is close to \cite[Example 9.13]{kino}. 

Investigating the projection as in the subsection \ref{sub:TypeII1}, we arrived at another affine variety $\Upsilon^{14}_{\mA}$ in the first version of this paper but we did not study properties of $\Upsilon^{14}_{\mA}$ so much in detail. After writing up the first version of this paper, we knew that R.~Taylor, in her phD thesis, University of Warwick \cite[\S 3.1.2]{Tay}, arrived at the construction of $\Upsilon^{14}_{\mA}$ independently and investigated it more in detail. She calls the projection as in the subsection \ref{sub:TypeII1} {\it Type II$^{(2,1)}_1$ projection}, so we follow her convention hereafter. We remark that Type II$^{(2,1)}_1$ projection is one special case of Type II$^{(n,m)}_1$ projection defined in \cite[\S 2]{Tay}. As for the following construction of $\Upsilon^{14}_{\mA}$, it is possible only to quot her result but we keep it to explain our way to derive it in our context although we update our result by using \cite{Tay}. 

\subsection{Construction of $\Upsilon^{14}_{\mA}$}\label{sub:constUp}
Consider the inverse of the projection $\Xi\dashrightarrow \Sigma^{13}_{\mA}$ as in the subsection \ref{sub:TypeII1}. This is actually defined as a rational map since if a point of $\Xi$ satisfies $u\not =0$, then the image of the inverse of the projection can be defined by the equations $up_1=F_1$ and $up_3=F_3$ in (\ref{Key2}), hence $\Xi\dashrightarrow \Sigma^{13}_{\mA}$ can be defined as a morphism on the open subset $\{u\not=0\}\cap \Xi$. This fact also implies that the indeterminacy of the inverse of the projection is contained in $\{u=0\}\cap \Xi$.
By computations, we see that $\{u=0\}\cap \Xi$ has two irreducible components; one is $D':=\{u=F_1=F_2=F_3=F_4=0\}$, and another is 
\[
D=\{u=0, \, \rank \begin{spmatrix}   q'_1 & r' & s'_{11} q_3 & s'_{11} q_2\\
q_3 & q_2 & q'_1& r' \end{spmatrix}\leq 1\},
\]
where we set $r':=r-\begin{svmatrix} s_{12} & s_{23} \\ q_1 & q_3 \end{svmatrix}$, $q'_1:=q_1-s_{13} q_3$, and $s'_{11}:=-s_{11}+s_{13}^2$.
Note that $\{u=0\}\cap \Sigma^{13}_{\mA}$ is contained in $\{F_1=F_2=F_3=F_4=0\}$. By computations, we see that the divisor $\{u=0\}\cap \Sigma^{13}_{\mA}$
dominates $D'$. Therefore, $D$ is the divisor to be unprojected.
We may easily verify that the equations $up_2=F_2$ and $up_4=F_4$ are written as follows:
\begin{equation}
\label{eq:1218codim2}
\begin{cases}
up_2=t'_1\Delta_{12}+t_2\Delta_{13}-t_3\Delta_{23},\\
up_4=s'_{12}\Delta_{12}-s_{22}\Delta_{13}+2s_{23}\Delta_{23}-\Delta_{24}, 
\end{cases}
\end{equation}
where $\Delta_{ij}$ is the $(i,j)$ minor of the matrix 
$\begin{spmatrix}   q'_1 & r' & s'_{11} q_3 & s'_{11} q_2\\
q_3 & q_2 & q'_1& r' \end{spmatrix}$, and we set $t'_1:=s_{13}t_3-t_1$ and $s'_{12}:=2(s_{12}-s_{13}s_{23})$.
It is easy to see the coordinate change $r\mapsto r', q_1\mapsto q'_1, s_{11}\mapsto s'_{11}, t_1\mapsto t'_1, s_{12}\mapsto s'_{12}$ as above is graded with respect to the weights such that the equations in (\ref{Key1}) and (\ref{Key2}) are homogeneous.

\vspace{3pt}

Slightly more generally, we consider  
\[
D:=\left\{u=0,\, \rank \begin{spmatrix} y_1 & y_2 & zx_1 & zx_2 \\
x_1 & x_2 & y_1 & y_2 \end{spmatrix}\leq 1\right\}
\]
in the following codimension two complete intersection:
\begin{equation}
\label{eq:codim2}
X:=
\begin{cases}
up_2=t_1\Delta_{12}+t_2\Delta_{13}+t_3\Delta_{23}+t_4\Delta_{24},\\
up_4=s_1\Delta_{12}+s_2\Delta_{13}+s_3\Delta_{23}+s_4\Delta_{24},
\end{cases}
\end{equation}
where $\Delta_{ij}$ is the $(i,j)$ minor of the matrix 
$\begin{spmatrix} y_1 & y_2 & zx_1 & zx_2 \\
x_1 & x_2 & y_1 & y_2 \end{spmatrix}$, and 
$p_2,p_4, t_i, s_j$ are variables.
There are obvious correspondences between variables for the equations of $\Sigma^{13}_{\mA}$ and the general format considered here; the coordinates $t_1$, $t_2$, $t_3$, $t_4$; $s_1$, $s_2$, $s_3$, $s_4$ in (\ref{eq:codim2}) correspond to 
$t'_1$, $t_2$, $-t_3$, $0$; $s'_{12}$, $-s_{22}$, $2s_{23}$, $-1$ in  (\ref{eq:1218codim2}) respectively. 
 
By (\ref{eq:codim2}), we obtain
\[
0=A_{12}(-\Delta_{12})+B_{11}\Delta_{13}+2B_{12}\Delta_{23}+B_{22}\Delta_{24},
\]
where, imitating p.2206 of \cite{II1}, we set 
$A_{12}=p_4t_1-p_2s_1, B_{11}=p_2s_2-p_4t_2, B_{12}=1/2 (p_2s_3-p_4t_3), B_{22}=p_2s_4-p_4t_4.$
Then we see that it is possible to choose two unprojection variables $p_1$ and $p_3$ such that the equations of the unprojection consists of the following nine equations:
\begin{itemize}
\item
the two equations (\ref{eq:codim2}), 
\item
five $4\times 4$ Pfaffians of the $5\times 5$ skew-symmetric matrix
\begin{equation}
\label{eq:5rel}
\begin{spmatrix}
0 & x_1 & x_2 & y_1  & y_2 \\
 & 0 & -p_1 & -B_{22} & p_3+B_{12}\\
& & 0 & -p_3+B_{12} & -B_{11}\\
& & & 0 & -zp_1-A_{12}
\end{spmatrix}
\end{equation}
(these five correspond to $l_1,\dots,l_4,q$ as in p.2206 of \cite{II1}), and 
\item
two new relations
\begin{equation}
\label{eq:2rel}
\begin{cases}
p_1u+\begin{spmatrix} t_1 & t_2 & t_3 & t_4 \end{spmatrix}M_1
\begin{spmatrix} s_1\\ s_2 \\s_3 \\s_4 \end{spmatrix}=0,\\
p_3u+\begin{spmatrix} t_1 & t_2 & t_3 & t_4 \end{spmatrix}M_3
\begin{spmatrix} s_1\\ s_2 \\s_3 \\s_4 \end{spmatrix}=0,
\end{cases}
\end{equation}
where
\begin{align*}
&M_1:=\begin{spmatrix} 0 & -x_1^2 & -x_1 x_2 & -x_2^2 \\
& 0 & -x_1 y_1 & -x_1 y_2-x_2 y_1\\
& & 0 & -x_2 y_2 \\
& & & 0\end{spmatrix},\\
&M_3:=\begin{spmatrix} 0 & x_1 y_1 & 1/2 (x_1y_2+x_2y_1) & x_2 y_2 \\
& 0 & 1/2(y_1^2+zx_1^2) & y_1 y_2+zx_1x_2\\
& & 0 & 1/2(y_2^2+zx_2^2)\\
& & & 0
\end{spmatrix}
\end{align*}
are skew-symmetric matrices.
\end{itemize}

\begin{defn}
\label{defn:Upsilon}
We denote by $\Upsilon^{14}_{\mA}$ the affine variety defined by these nine equations.
\end{defn}

We can easily verify that $\Upsilon^{14}_{\mA}$ is isomorphic to the affine variety defined by the nine equations as in \cite[Thm.3.1.2 and Prop.~3.1.1]{Tay}. Therefore, by \cite[Thm.~2.2.1 and Cor.~2.7.1]{Tay}, it holds that $\Upsilon^{14}_{\mA}$ is Gorenstein.

\subsection{Relation between $\Sigma^{13}_{\mA}$ and $\Upsilon^{14}_{\mA}$}

A close relationship between $\Sigma^{13}_{\mA}$ and $\Upsilon^{14}_{\mA}$ is indicated in the subsection \ref{sub:constUp}. Actually, we have the following:
\begin{prop}
\label{Sigma13Upsilon14}
There exists an isomorphism from $\Sigma^{13}_{\mA}\times \mA^{1*}$ to $\Upsilon^{14}_{\mA}\cap \{s_4\not =0\}$.
\end{prop}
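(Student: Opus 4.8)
The plan is to exhibit an explicit isomorphism and check it is well-defined on both sides, mirroring the structure of Lemma \ref{lem:1314}. The variety $\Upsilon^{14}_{\mA}$ was constructed in the subsection \ref{sub:constUp} precisely by unprojecting the divisor $D$ in the codimension two complete intersection $\Xi$ attached to $\Sigma^{13}_{\mA}$, so the natural move is to run that construction \emph{backwards} with the general format specialized as in the dictionary just after \eqref{eq:codim2}: set $t_1,t_2,t_3,t_4;s_1,s_2,s_3,s_4$ equal to $t'_1,t_2,-t_3,0;s'_{12},-s_{22},2s_{23},-1$ and $x_1,x_2,y_1,y_2,z$ equal to $q_3,q_2,q'_1,r',s'_{11}$. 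The point is that under this specialization the relation $s_4=-1$ is replaced by a free coordinate, call it $\lambda$, on the $\mA^{1*}$-factor; concretely, I would define the map
\[
\Sigma^{13}_{\mA}\times \mA^{1*}\ni\bigl((\bm{p},p_4,\bm{q},r,u,S',\bm{t}),\lambda\bigr)\longmapsto \bigl(\text{the point of }\Upsilon^{14}_{\mA}\text{ with }s_4=-\lambda\bigr),
\]
rescaling the remaining $t_i,s_j$-type coordinates and $p_1,p_3,u,p_2,p_4$ by appropriate powers of $\lambda$ so that all nine defining equations of $\Upsilon^{14}_{\mA}$ become homogeneous consequences of \eqref{Key1}, \eqref{Key2}. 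The inverse sends a point of $\Upsilon^{14}_{\mA}\cap\{s_4\not=0\}$ to $\lambda:=-s_4$ together with the $\Sigma^{13}_{\mA}$-coordinates recovered by dividing out the $\lambda$-powers and then undoing the triangular coordinate change $r'\mapsto r$, $q'_1\mapsto q_1$, $s'_{11}\mapsto s_{11}$, $t'_1\mapsto t_1$, $s'_{12}\mapsto s_{12}$ noted in the subsection \ref{sub:constUp} to be graded and invertible.

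The key steps, in order, are: (i) write down the two maps explicitly with the correct weights, using \eqref{eq:wsij}--\eqref{eq:wti} to pin down which power of $\lambda$ multiplies each coordinate so that everything is $\mathbb{C}^*$-equivariant; (ii) verify that the forward map lands in $\Upsilon^{14}_{\mA}$, i.e. that the two equations \eqref{eq:codim2}, the five $4\times4$ Pfaffians of \eqref{eq:5rel}, and the two new relations \eqref{eq:2rel} all hold — for the two codimension-two equations this is exactly \eqref{eq:1218codim2} (equivalently the equations $up_2=F_2$, $up_4=F_4$ of $\Xi$) after the coordinate change, and for the Pfaffians and the relations \eqref{eq:2rel} it is the content of the unprojection computation already carried out in the subsection \ref{sub:constUp}, so one only has to observe that the choice $s_4\not=0$ makes all those identities specialize correctly; (iii) verify the reverse map lands in $\Sigma^{13}_{\mA}\times\mA^{1*}$, for which the essential input is that on $\{s_4\not=0\}$ one can normalize $s_4=-1$ and then the nine equations of $\Upsilon^{14}_{\mA}$ become, after the inverse triangular change of coordinates, precisely the nine equations \eqref{Key1}, \eqref{Key2} — here the two relations \eqref{eq:2rel} reproduce $up_1=F_1$, $up_3=F_3$, the Pfaffians of \eqref{eq:5rel} reproduce the five Pfaffians of \eqref{eq:Pf} (equivalently the equations \eqref{Key1}), and \eqref{eq:codim2} reproduces $up_2=F_2$, $up_4=F_4$; (iv) check the two maps are mutually inverse, which is immediate once (i) is set up consistently since both are just scalings composed with the invertible triangular change of coordinates.

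The main obstacle I expect is step (i) together with the bookkeeping in (ii)--(iii): one must choose the $\lambda$-weights of the fourteen coordinates of $\Upsilon^{14}_{\mA}$ so that \emph{simultaneously} all nine equations are homogeneous and specialize to the $\Sigma^{13}_{\mA}$-equations, and because $\Upsilon^{14}_{\mA}$ is defined with the \emph{general} $x_i,y_i,z,t_i,s_j$ while $\Sigma^{13}_{\mA}$ uses the \emph{specialized} ones, one has to be careful that the specialization $s_4=-1$, $t_4=0$ is compatible with the $\mathbb{C}^*$-action — i.e. that $t_4$ is forced to vanish and is not an independent coordinate, matching the fact that $\dim\Sigma^{13}_{\mA}\times\mA^{1*}=\dim\Upsilon^{14}_{\mA}=14$. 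Once the weights are correctly assigned, the verification that the equations match is a routine (if lengthy) substitution, entirely parallel to the proof of Lemma \ref{lem:1314}, and I would simply assert it with the remark ``we can show easily that this is an isomorphism,'' exactly as is done there.
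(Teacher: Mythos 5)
There is a genuine gap, and it sits exactly at the point you flagged as your ``main obstacle'': the coordinate $t_4$. Your map is the dictionary of the subsection \ref{sub:constUp} followed by a \emph{diagonal} rescaling by powers of $\lambda=-s_4$. The dictionary lands in the slice $\{t_4=0,\,s_4=-1\}$, and a diagonal $\mC^*$-rescaling fixes the hyperplane $\{t_4=0\}$, so the image of your map is contained in $\Upsilon^{14}_{\mA}\cap\{s_4\not=0,\,t_4=0\}$. But $t_4$ is \emph{not} forced to vanish on $\Upsilon^{14}_{\mA}$: it is one of the nine base coordinates of $\mA(z,s_i,t_j)$, onto which $\widehat{\Upsilon}^{13}$ maps dominantly with general fiber $\mP^2\times\mP^2$ (the subsection \ref{sub:P2P2Up}), so $\{t_4=0\}$ cuts out a proper closed subset of $\Upsilon^{14}_{\mA}\cap\{s_4\not=0\}$ and your map cannot be surjective. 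The dimension count you invoke is consistent with the proposition but does not imply that $t_4$ ``is not an independent coordinate''; what actually accounts for the $t_4$-direction is a shear, not a scaling.

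That shear is precisely the step in the paper's proof you are missing. On $\{s_4\not=0\}$ one replaces the two equations (\ref{eq:codim2}) by (first)$\,-\,(t_4/s_4)\times$(second) and $(-1/s_4)\times$(second); this amounts to the triangular substitution $p_2\mapsto p_2-(t_4/s_4)p_4$, $t_i\mapsto t_i-(t_4/s_4)s_i$, $p_4\mapsto -p_4/s_4$, $s_j\mapsto -s_j/s_4$ (together with a compatible rescaling of $p_1,p_3$), under which the bilinear expressions $A_{12},B_{11},B_{12},B_{22}$ and the forms ${\empty^t\bm{t}}M_k\bm{s}$ change only by overall factors (using the skew-symmetry of $M_1,M_3$), so all nine equations are preserved; the substitution normalizes $t_4$ to $0$ and $s_4$ to $-1$ while retaining $s_4$ as the $\mA^{1*}$-coordinate. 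This non-diagonal step is what absorbs the $t_4$-direction; the triangular change you do mention ($r'\mapsto r$, $q'_1\mapsto q_1$, $s'_{11}\mapsto s_{11}$, $t'_1\mapsto t_1$, $s'_{12}\mapsto s_{12}$) lives entirely inside the slice $\{t_4=0,s_4=-1\}$ and never touches $t_4$. Once you insert the shear, the remainder of your outline --- identifying the nine equations with (\ref{Key1}) and (\ref{Key2}) via the dictionary, in the style of Lemma \ref{lem:1314} --- proceeds as in the paper.
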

\begin{proof}
By the obvious correspondence between the equations (\ref{eq:1218codim2}) and (\ref{eq:codim2}), we can easily verify that
there exists an isomorphism from $\Sigma^{13}_{\mA}$ to $\Upsilon^{14}_{\mA}\cap \{t_4=0, s_4 =-1\}$. 
Indeed, with the coordinates $p_1,p_2,p_4,u,t_2$ being unchanged, the following equalities defines this isomorphism, where the l.h.s. are the cooridinates of $\Upsilon^{14}_{\mA}$ and the r.h.s.~are polynomials of the cooridinates of $\Sigma^{13}_{\mA}$, and, to avoid confusion, we denote the coordinates $t_1,t_3,p_3$ of $\Upsilon^{14}_{\mA}$ by the capital letters $T_1,T_3,P_3$:
\begin{align*}
&T_1 = s_{13} t_3 - t_1, T_3 = -t_3, s_1 = 2 (s_{12} - s_{13} s_{23}), s_2 = -s_{22}, s_3 = 2 s_{23},\\
 &x_1 = q_3, x_2 = q_2, y_1 =
  q_1 - s_{13} q_3, y_2 = r - (s_{12} q_3 - q_1 s_{23}),\\
  & z = -s_{11} + s_{13}^2, P_3 =  p_3 +  p_1 s_{13} +  p_2 s_{23} +1/2 p_4 t_3.
  \end{align*}
On $\Upsilon^{14}_{\mA}\cap\{s_4\not=0\}$, we can replace the two equations in (\ref{eq:codim2}) by 
{\small{
\begin{equation*}
\label{eq:codim2v2}
\begin{cases}
u(p_2-(t_4/s_4)p_4)=(t_1-(t_4/s_4) s_1)\Delta_{12}+(t_2-(t_4/s_4) s_2)\Delta_{13}+(t_3-(t_4/s_4) s_3)\Delta_{23},\\
u(-p_4/s_4)=(-s_1/s_4)\Delta_{12}-(s_2/s_4)\Delta_{13}-(s_3/s_4)\Delta_{23}-\Delta_{24},
\end{cases}
\end{equation*}}}
where the first equation is equal to the first equation of  (\ref{eq:codim2})$-t_4/s_4\times$ the second equation of  (\ref{eq:codim2}), and the second equation is equal to $-1/s_4\times$ the second equation of  (\ref{eq:codim2}). From this, we see that  $\Upsilon^{14}_{\mA}\cap \{t_4=0, s_4 =-1\}\times \mA^{1*}$ is ismorphic to $\Upsilon^{14}_{\mA}\cap \{s_4\not =0\}$, where $s_4$ is the coordinate of $\mA^{1*}$.
 \end{proof}
In \cite[Chap.4]{Tay}, candidates of prime $\mQ$-Fano threefolds are constructed by using $\Upsilon^{14}_{\mA}$ (though the Picard numbers are not calculated) for several classes including the classes considered in this paper except the 8 classes No.393, 642, 644, 4850, 5202, 11004, 16227, and 24078, for which correspoinding $\mQ$-Fano threefolds have no Type II$^{(2,1)}_1$ projection. By Proposition \ref{Sigma13Upsilon14}, however, examples of prime $\mQ$-Fano threefolds can be constructed by using $\Upsilon^{14}_{\mA}$ also for all the classes considered in this paper except one class No.24078 since they are constructed by using $\Sigma^{13}_{\mA}$ in this paper. As for No.24078, it seems that no examples of prime $\mQ$-Fano threefolds can be constructed by using $\Upsilon^{14}_{\mA}$ though we do not search the weights of coordinates exhaustively. We will give one evidence for this inference in Proposition \ref{nonisoSIUp}.

In a further work, we show that candidates of prime $\mQ$-Fano threefolds constructed in \cite[Chap.4]{Tay} but not constructed in this paper are essentially the same as the corresponding examples of prime $\mQ$-Fano threefolds constructed in \cite{CD} by clarifying a relationship between $\Upsilon^{14}_{\mA}$ and the cluster variety of type $C_2$.

\section{\bf More on geometry of $\Sigma^{14}_{\mA}$}
\label{More14}

\subsection{$\GL_3$- and $(\mC^*)^6$-action}
\label{sub:sl3}
We define a $\GL_3$- and a $(\mC^*)^6$-actions on $\Sigma_{\mA}^{14}$.

By the conditions on the weights of coordinates (\ref{eq:wsij})--(\ref{eq:wti}), we have a $(\mC^*)^6$-action on $\Sigma_{\mA}^{14}$ since we may choose $d_0$, $w(p_1)$, $w(p_2)$, $w(p_3)$, $w(r)$, and $w(u)$ freely.

We define a $\GL_3$-action on $\Sigma_{\mA}^{14}$. We define a $\GL_3$-action on the ambient affine space $\mA^{18}$ of
$\Sigma_{\mA}^{14}$ by setting
\begin{align*}
&\bm{p}\mapsto g\bm{p},\, \bm{q}\mapsto ({\empty^t}\!{g})^{\dagger}\bm{q},
S\mapsto  ({\empty^t}\!{g})^{\dagger}Sg^{\dagger},\, \bm{t}\mapsto  ({\empty^t}\!{g})^{\dagger}\bm{t},\\
&p_4\mapsto (\det g) p_4,\, r\mapsto (\det g)^2 r,\, u\mapsto (\det g)^3 u
\end{align*}
for  any element $g$ of $\GL_3$.
Then we can easily check that
\begin{align*}
&A\mapsto gA ({\empty^t\!g}),\, S^{\dagger}\mapsto (\det g)^2 gS^{\dagger} ({\empty^t\!g}),\\
&\begin{spmatrix} F_1 \\ F_2 \\F_3 \end{spmatrix}\mapsto (\det g)^3 g\begin{spmatrix} F_1 \\ F_2 \\F_3 \end{spmatrix},\,F_4\mapsto (\det g)^4 F_4,
\end{align*}
and hence a $\GL_3$-action on $\Sigma_{\mA}^{14}$ is certainly defined by the equations (\ref{Key1}) and (\ref{Key2}).

\subsection{$\mP^2\times \mP^2$-fibration related with $\Sigma^{14}_{\mA}$}
\label{fib13}
Note that 
any equation of $\Sigma^{14}_{\mA}$ is of degree two if 
we regard the entries of $S$ and $\bm{t}$ as constants.
Therefore, considering 
the variables of the equations of $\Sigma_{\mA}^{14}$ except the entries of $S$ and $\bm{t}$ as projective coordinates of weight one, we obtain a $13$-dimensional variety in $\mA(S,\bm{t})\times \mP(\bm{p},p_4,\bm{q},r,u)$, where $\mA(S,\bm{t})$ is the affine space with coordinates $S,\bm{t}$, and $\mP(\bm{p},p_4,\bm{q},r,u)$ is the projective space with the coordinates $\bm{p},p_4,\bm{q},r,u$. We denote this variety by $\widehat{\Sigma}^{13}$, and by $\rho_{\Sigma}\colon \widehat{\Sigma}^{13}\to \mA(S,\bm{t})$ the natural projection. 
Note that the $\GL_3$- and $(\mC^*)^6$-actions on $\Sigma_{\mA}^{14}$ defined as in the subsection \ref{sub:sl3}
induces those on $\widehat{\Sigma}^{13}$ and those on $\mA(S,\bm{t})$, and
the natural projection $\rho_{\Sigma}\colon \widehat{\Sigma}^{13}\to \mA(S,\bm{t})$
is equivariant with respect to these actions.

In Proposition \ref{prop:pifib} below, we shall show that a $\rho_{\Sigma}$-general fiber is isomorphic to $\mP^2\times \mP^2$ with more precise descriptions.

As a preparation, let \[
M:=\begin{spmatrix}
m_{11}& m_{12} & m_{13}\\
m_{12} & m_{22} & m_{23}\\
m_{13} & m_{23} & m_{33}
\end{spmatrix},\, \bm{\sigma}=\begin{spmatrix} \sigma_1 \\ \sigma_2 \\ \sigma_3\end{spmatrix},
\]
where we note that $M$ is a symmetric matrix.
Consider the quasi-projective variety ${\tt \Sigma}$ defined by the following equations: 
\begin{equation}
\label{eq:doublesym}
M^{\dagger}=\lambda\bm{\sigma} ({{\empty}^t\!\bm{\sigma}}),\, M\bm{\sigma}=\bf{0}
\end{equation}
in $\mP(M)\times \mA(\lambda)\simeq \mP^8\times \mA^1$, 
where $M^{\dagger}$ is the adjoint matrix of $M$, $\mP(M)$ is the projective space whose coordinates are the entries of $M$,
and $\mA(\lambda)$ is the affine line with the coordinate $\lambda$.
Let $\pi_{\tt \Sigma}\colon {\tt \Sigma}\to \mA(\lambda)$ be the natural projection.
The $\pi_{\tt \Sigma}$-fiber over $0$ is defined by 
 \begin{equation}
M^{\dagger}=O,\, M\bm{\sigma}=\bf{0}.
\end{equation}
This is nothing but the singular sextic del Pezzo $4$-fold $\mP^{2,2}$ introduced in \cite{P22M} and revisited in \cite{Fu}.
The equation of the $\pi_{\tt \Sigma}$-fiber over a point $\lambda\not =0$ turns out to be equal to 
\begin{equation}
\label{eq:P2P2}
\rank \begin{spmatrix}
m_{11}& m_{12}-\sqrt{-\lambda}\sigma_3 & m_{13}+\sqrt{-\lambda}\sigma_2\\
m_{12}+\sqrt{-\lambda}\sigma_3 & m_{22} & m_{23}-\sqrt{-\lambda}\sigma_1\\
m_{13}-\sqrt{-\lambda}\sigma_2 & m_{23}+\sqrt{-\lambda}\sigma_1 & m_{33}
\end{spmatrix}\leq 1.
\end{equation}
Therefore the $\pi_{\tt \Sigma}$-fiber over a point $\lambda\not =0$ is isomorphic to the Segre embedded $\mP^2\times \mP^2$.
\begin{rem}
In a forthcoming paper, we show that the cluster variety of type $C_2$ with the coordinates $A_1$ and $A_3$ being nonzero constants is isomorphic to the product of $\mA^1$ and the affine variety $\tt{\Sigma}_{\mA}$ defined by the equation (\ref{eq:doublesym}),
where we refer to \cite{CD} for the notation $A_1$ and $A_3$. 
\end{rem}

\begin{prop}
\label{prop:pifib}
Descriptions of $\rho_{\Sigma}$-fibers are given as follows, where we set $B:=\begin{spmatrix} 0 & -t_3 & t_2\\
                   t_3 & 0 & -t_1\\
                  -t_2 & t_1 & 0
\end{spmatrix}$, $Q^n$ denotes an $n$-dimensional quadric, and $(\mP^1\times \mP^3)'$ denotes the cone over a hyperplane section of the Segre embedded $\mP^1\times \mP^3$:
\begin{itemize}
\item
The $\rho_{\Sigma}$-fiber over a point with $S\not=O$ and $\bm{t}\not=\bm{0}$ is described as in Table \ref{Tablefib1}.
{\small{
\begin{table}[hp]
\begin{center}
\caption{$\rho_{\Sigma}$-fibers in case of $S\not=O$ and $\bm{t}\not=\bm{0}$}
\label{Tablefib1}
\begin{tabular}{|c||c|c||c|c|c||c|c|} \hline
$\rank S$ & \multicolumn{2}{c||}{$3$} & \multicolumn{3}{c||}{$2$} &  \multicolumn{2}{c|}{$1$} \\ \hline
${{\empty}^t\bm{t}}S^{\dagger} \bm{t}$ & $\not =0$ & $0$ & $\not =0$ & \multicolumn{2}{c||}{$0$} & \multicolumn{2}{c|}{$0$}\\ \hline
$\rank BSB$ & $2$ & $1$ & $2$ & $1$ & $0$ & $1$ & $0$\\ \hline
$\rho_{\Sigma}$-fiber & $\mP^2\times \mP^2$ & $\mP^{2,2}$ & $\mP^2\times \mP^2$ & $\mP^{2,2}$ & $\mP^1\times \mP^3\cup$ $Q^4$& $\mP^{2,2}$ & $(\mP^1\times \mP^3)'\cup$ $Q^4$ \\ \hline
\end{tabular}
\end{center}
\end{table}
}}
\item
The $\rho_{\Sigma}$-fiber over a point with $S\not=O$ and $\bm{t}=\bm{0}$ is described as in Table \ref{Tablefib2}.
\begin{table}[!hbt]
\begin{center}
\caption{$\rho_{\Sigma}$-fibers in case of $S\not =O$ and $\bm{t}=\bm{0}$}
\label{Tablefib2}
\begin{tabular}{|c||c|c|c|} \hline
$\rank S$ & $3$ & $2$ & $1$\\ \hline
$\rho_{\Sigma}$-fiber & $(\mP^1\times \mP^3)'\cup$ $Q^4$ & $\mP^4\cup \mP^4\cup\mP^4\cup Q^5\cup Q^5$ & $Q^4\cup$ double $Q^4$\\ \hline
\end{tabular}
\end{center}
\end{table}
\item
The $\rho_{\Sigma}$-fiber over a point with $S=O$ and $\bm{t}\not=\bm{0}$ is the union of $(\mP^1\times \mP^3)'$ and a smooth $Q^4$.
\item
The $\rho_{\Sigma}$-fiber over the point with $S=O$ and $\bm{t}=\bm{0}$ is the union of the $Q^5=\{r=u=p_1 q_1+p_2 q_2+p_3 q_3=0\}$ and the $Q^4=\{p_1=p_2=p_3=r^2+p_4 u=0\}$.

\end{itemize}
In particular, the $\rho_{\Sigma}$-fiber over a point $(S,\bm{t})$ is $\mP^2\times \mP^2$ (resp.~$\mP^{2,2}$) if and only if $(S,\bm{t})$ belongs to the open subset $\{{{\empty}^t\bm{t}}S^{\dagger} \bm{t}\not =0\}$ (resp.~the locally closed subset $\{{{\empty}^t\bm{t}}S^{\dagger} \bm{t} =0, BSB\not=O\}$).
\end{prop}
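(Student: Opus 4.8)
The plan is to work fiber-by-fiber, exploiting the fact that the equations of $\Sigma^{14}_{\mA}$ become quadratic once we freeze $S$ and $\bm{t}$, so each $\rho_{\Sigma}$-fiber is a subscheme of $\mP(\bm{p},p_4,\bm{q},r,u)=\mP^8$ cut out by the nine quadrics obtained by substituting the fixed values of $S,\bm{t}$ into (\ref{Key1}) and (\ref{Key2}). First I would use the $\GL_3$- and $(\mC^*)^6$-equivariance of $\rho_{\Sigma}$ (established in the subsection \ref{sub:sl3}) to reduce $(S,\bm{t})$ to a normal form: by the $\GL_3$-action $S\mapsto ({}^t g)^{\dagger}S g^{\dagger}$ and $\bm{t}\mapsto ({}^t g)^{\dagger}\bm{t}$, we can simultaneously put $S$ into a diagonal form according to its rank and bring $\bm{t}$ into a position compatible with the isotropy of that diagonal $S$; the scaling action then normalizes nonzero scalars. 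This cuts the verification down to a finite list of representative pairs $(S,\bm{t})$, one for each column of Tables \ref{Tablefib1} and \ref{Tablefib2} plus the two $S=O$ cases, and for each representative the nine quadrics become explicit and small.

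Next, for the generic stratum I would make the link to the model variety $\tt{\Sigma}$ defined by (\ref{eq:doublesym}) precise. The point is that after eliminating the ``linear-in-$\bm{p},p_4,r,u$'' equations, the residual quadratic conditions on $\mP(\bm{p},p_4,\bm{q},r,u)$ should reorganize into a rank $\leq 1$ condition on a $3\times3$ matrix of the shape (\ref{eq:P2P2}), with $\lambda$ replaced by (a multiple of) $\delta:={}^t\bm{t}S^{\dagger}\bm{t}$ and the antisymmetric perturbation built from $BSB$ (here $B$ is the matrix defined in the statement of Proposition \ref{prop:pifib}). So the first main step is the identity: over $\{\delta\neq0\}$ the fiber is cut out by the $2\times2$ minors of a $3\times3$ matrix $M(S,\bm{t})$ whose off-diagonal antisymmetric part is nonzero, hence is the Segre $\mP^2\times\mP^2$; over $\{\delta=0,\ BSB\neq O\}$ the same matrix degenerates to a genuine symmetric matrix together with a vanishing-vector condition, i.e.\ becomes exactly the pair of equations $M^{\dagger}=O$, $M\bm{\sigma}=\bm{0}$ defining $\mP^{2,2}$. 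These two assertions give the ``in particular'' clause of the proposition once one checks that the locus $\{\delta\neq0\}$ and $\{\delta=0,\ BSB\neq O\}$ are exactly the strata where $\rho_{\Sigma}$ has these fibers — which will follow from the case analysis below since $\rank BSB$ is the invariant distinguishing the remaining columns.

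Then I would grind (in the finished paper, not here) through the degenerate strata. For each normal-form representative in Tables \ref{Tablefib1} and \ref{Tablefib2}, substitute into the nine quadrics, compute the ideal, and identify its primary decomposition: one checks directly that the components are the claimed quadrics $Q^n$, linear spaces $\mP^n$, Segre cones $(\mP^1\times\mP^3)'$ or $\mP^1\times\mP^3$, or nonreduced structures (``double $Q^4$''). For the two $S=O$ and the $S=O,\bm{t}=\bm{0}$ cases the quadrics are particularly simple — e.g.\ when $S=O$, $\bm{t}=\bm{0}$ equations (\ref{Key1}), (\ref{Key2}) reduce to ${}^t\bm{p}\bm{q}=0$, $r\bm{p}=\bm{0}$, $u\bm{p}=\bm{0}$, $r^2+p_4u=0$, whose zero locus over $\mP^8$ visibly splits as $\{r=u={}^t\bm{p}\bm{q}=0\}\cup\{\bm{p}=\bm{0},\ r^2+p_4u=0\}$, matching the last bullet. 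The bookkeeping of which $(\rank S,\ \delta,\ \rank BSB)$ can co-occur — i.e.\ that the columns of Table \ref{Tablefib1} are exhaustive and mutually exclusive — is a small linear-algebra lemma about a symmetric $3\times3$ $S$ and the skew-symmetric $BSB$.

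The main obstacle I anticipate is not any single fiber but the degenerate ``reducible/non-reduced'' cases in Table \ref{Tablefib2}, especially $\rank S=2$ where the fiber is asserted to be a union of five components ($\mP^4\cup\mP^4\cup\mP^4\cup Q^5\cup Q^5$) and $\rank S=1$ where a nonreduced ``double $Q^4$'' appears: getting the primary decomposition right (including multiplicities and the precise linear/quadric types) requires care, and one must verify that a \emph{general} point of each stratum — not just the chosen normal form — has the stated fiber, which is where equivariance (transitivity of $\GL_3$ on each stratum, up to scaling) does the real work. Once the normal-form computations and the equivariance reduction are in place, the ``in particular'' statement is immediate: $\mP^2\times\mP^2$ occurs exactly on $\{\delta\neq0\}$ and $\mP^{2,2}$ exactly on $\{\delta=0,\ BSB\neq O\}$, since Table \ref{Tablefib1} shows these are precisely the columns with those fibers and Table \ref{Tablefib2} (where $\bm{t}=\bm{0}$, so $\delta=0$ and $BSB=O$) contributes no such fiber.
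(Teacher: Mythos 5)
Your proposal follows essentially the same route as the paper's proof: reduce $(S,\bm{t})$ to normal forms via the $\GL_3$- and $(\mC^*)^6$-equivariance (noting that ${}^t\bm{t}S^{\dagger}\bm{t}$ and $\rank BSB$ are invariants of the action), exhibit an explicit $M$, $\bm{\sigma}$, $\lambda$ putting the fiber into the format (\ref{eq:doublesym}) so that $\lambda={}^t\bm{t}S^{\dagger}\bm{t}$ governs the $\mP^2\times\mP^2$ versus $\mP^{2,2}$ dichotomy, and compute the degenerate strata case by case. This matches the paper's argument, which likewise treats the $\rank S=3$ and $\rank S=2$ cases explicitly and omits the remaining similar computations.
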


\begin{proof}
To derive the descriptions of  $\rho_{\Sigma}$-fibers,
it is important to note that the condition that ${{\empty}^t\bm{t}}S^{\dagger} \bm{t}=0$ or not, and $\rank BSB$ are invariant under the $\GL_3$- and $(\mC^*)^6$-actions on $\mA(S,\bm{t})$. Therefore we may describe the $\rho_{\Sigma}$-fiber over a point $(S,\bm{t})$ transforming $(S,\bm{t})$ to a point with a simpler form by the $\GL_3$- and $(\mC^*)^6$-actions.

Assume that $\rank S=3$ and $\bm{t}\not=\bm{0}$. Then, by the $\GL_3$- and $(\mC^*)^6$-actions, we may assume that
$S=\begin{spmatrix} 1 & 0 & 0\\ 0& 1 & 0 \\ 0 & 0 & 1 \end{spmatrix}$ and $t_2^2+t_3^2=1$. Then setting 
\[
M=\begin{spmatrix} t_1 p_1+t_2 p_2+t_3 p_3 & -t_3 p_2+t_2 p_3 & q_1-t_1 t_2 q_2-t_1 t_3 q_3\\
-t_3 p_2+t_2 p_3 & -p_4+t_1 p_1-t_2 p_2-t_3 p_3 & r+t_1 t_3 q_2-t_1 t_2 q_3\\
q_1-t_1 t_2 q_2-t_1 t_3 q_3& r+t_1 t_3 q_2-t_1 t_2 q_3 & u \end{spmatrix},
\]
$\bm{\sigma}=\begin{spmatrix} t_2 q_2+t_3 q_3 \\ t_2 q_3-t_3 q_2 \\ p_1 \end{spmatrix}$, and $\lambda=t_1^2+1$,
we may check the $\rho_{\Sigma}$-fiber is defined by the format (\ref{eq:doublesym}). Since ${{\empty}^t\bm{t}}S^{\dagger} \bm{t}=t_1^2+1$,
the $\rho_{\Sigma}$-fiber is $\mP^2\times \mP^2$ or $\mP^{2,2}$ if ${{\empty}^t\bm{t}}S^{\dagger} \bm{t}\not =0$ or $=0$ respectively.
Since $BSB=\begin{spmatrix} -1& t_1 t_2& t_1 t_3 \\ t_1 t_2&  -t_1^2 - t_3^2 & t_2 t_3\\ t_1 t_3 & 
  t_2 t_3 &-t_1^2 - t_2^2\end{spmatrix}$, we may check that if ${{\empty}^t\bm{t}}S^{\dagger} \bm{t}\not =0$ or $=0$, then 
$\rank BSB=2$ or $1$ respectively.

Assume that $\rank S=2$ and $\bm{t}\not=\bm{0}$. Then, by the $\GL_3$- and $(\mC^*)^6$-actions, we may assume that
$S=\begin{spmatrix} 0 & 1 & 0\\ 1 & 0 & 0 \\ 0 & 0 & 0 \end{spmatrix}$ and $\bm{t}=\begin{spmatrix} 0 \\ 0 \\ 1\end{spmatrix}$,
$\begin{spmatrix} 1 \\ 1 \\ 0\end{spmatrix}$, or $\begin{spmatrix} 1 \\ 0 \\ 0\end{spmatrix}$ considering the possibilities of three lines in $\mP^2$. Since ${{\empty}^t\bm{t}}S^{\dagger} \bm{t}=-t_3^2$ and $BSB=\begin{spmatrix} 0&  t_3^2& -t_2 t_3\\
 t_3^2& 0& -t_1 t_3\\ -t_2 t_3& -t_1 t_3& 2 t_1 t_2\end{spmatrix}$ for $S=\begin{spmatrix} 0 & 1 & 0\\ 1 & 0 & 0 \\ 0 & 0 & 0 \end{spmatrix}$, the three possibilities of $\bm{t}$ corresponds to the three cases that
${{\empty}^t\bm{t}}S^{\dagger} \bm{t}\not =0$ and $\rank BSB=2$, ${{\empty}^t\bm{t}}S^{\dagger} \bm{t} =0$ and $\rank BSB=1$,
or ${{\empty}^t\bm{t}}S^{\dagger} \bm{t} =0$ and $BSB=O$ respectively. 
In the case that $\bm{t}=\begin{spmatrix} 0 \\ 0 \\ 1\end{spmatrix}$ or 
$\begin{spmatrix} 1 \\ 1 \\ 0\end{spmatrix}$, setting $M$ and $\bm{\sigma}$ as follows and $\lambda:={{\empty}^t\bm{t}}S^{\dagger} \bm{t}=-t_3^2$,
we may check the $\rho_{\Sigma}$-fiber is defined by the format (\ref{eq:doublesym}).
In the case that $\bm{t}=\begin{spmatrix} 	1 \\ 0 \\ 0\end{spmatrix}$,
the $\rho_{\Sigma}$-fiber is described as follows.
\begin{itemize}
\item $\bm{t}=\begin{spmatrix} 0 \\ 0 \\ 1\end{spmatrix}$:
\begin{align*}
&M=\begin{spmatrix} p_3 & 1/\sqrt{2}\, (p_1+p_2) & -1/\sqrt{2}\, (q_1-q_2)\\
1/\sqrt{2}\, (p_1+p_2) & -p_4 & r \\
-1/\sqrt{2}\, (q_1-q_2) & r & u
\end{spmatrix},
\bm{\sigma}=\begin{spmatrix}
q_3 \\ 1/\sqrt{2}\, (q_1+q_2) \\ 1/\sqrt{2}\, (-p_1+p_2)
\end{spmatrix}.
\end{align*}
\item
$\bm{t}=\begin{spmatrix} 1\\ 1 \\ 0\end{spmatrix}$:
\begin{align*}
&M=\begin{spmatrix} p_1+p_2 & -i/\sqrt{2}\, (p_1-p_2) & \sqrt{2}iq_3\\
-i/\sqrt{2}\, (p_1-p_2) & -p_4-1/2\, (p_1+p_2) & r \\
\sqrt{2}i\, q_3 & r & u
\end{spmatrix},
\bm{\sigma}=\begin{spmatrix}
-1/2\, (q_1+q_2) \\ -i/\sqrt{2}\, (q_1-q_2) \\ 1/\sqrt{2}\, i p_3
\end{spmatrix}.
\end{align*}
\item
$\bm{t}=\begin{spmatrix} 1\\ 0 \\ 0\end{spmatrix}$:
\begin{align*}
&\{p_1=0,\rank \begin{spmatrix} p_2 & p_3 & q_3-r & p_4 \\
-q_3 & q_2 & u & q_3+r
\end{spmatrix}\leq 1\}\displaystyle\cup\\
&\qquad\{q_3 =r, u=0, p_4= -2 p_2, p_1 q_1 + p_2 q_2 + p_3 q_3=0\}.
\end{align*}
\end{itemize}
We can describe the $\rho_{\Sigma}$-fibers similarly in the other cases, hence we omit them.
\end{proof}

\subsection{$\mP^2\times \mP^2$-fibration related with $\Upsilon^{14}_{\mA}$}
\label{sub:P2P2Up}
Note that 
any equation of $\Upsilon^{14}_{\mA}$ is of degree two if 
we regard the coordinates $z$ and $s_i, t_j$ $(1\leq i,j\leq 4)$ as constants.
Therefore, similarly to the definition of $\widehat{\Sigma}^{13}$, a $13$-dimensional variety $\widehat{\Upsilon}^{13}$ in $\mA(z,s_i,t_j)\times \mP(x_1,x_2,y_1,y_2,p_1,p_2,p_3,p_4,u)$ is defined by $\Upsilon^{14}_{\mA}$.
Let $\rho_{\Upsilon}\colon \widehat{\Upsilon}^{13}\to \mA(z,s_i,t_j)$ be the natural projection.
Note that the isomorphism from $\Sigma^{13}_{\mA}\times \mA^{1*}$ to $\Upsilon^{14}_{\mA}|_{s_4\not =0}$ as in Proposition \ref{Sigma13Upsilon14} is linear with respect to the coordinates vertical for the projections to $\mA(S',\bm{t})\times \mA^{1*}$ and $\mA(z,s_i,t_j)|_{s_4\not =0}$ respectively. Thus this induces an isomorphism $(\widehat{\Sigma}^{13}|_{s_{33}=1})\times \mA^{1*}\to \widehat{\Upsilon}^{13}|_{s_4\not =0}$. Moreover this induces an isomorphism $\mA(S',\bm{t})\times \mA^{1*}\to \mA(z,s_i,t_j)|_{s_4\not =0}$ such that the following diagram commutes:
\[
\xymatrix{(\widehat{\Sigma}^{13}|_{s_{33}=1})\times \mA^{1*}\ar[r]\ar[d] & \widehat{\Upsilon}^{13}|_{s_4\not =0}\ar[d]\\
\mA(S',\bm{t})\times \mA^{1*}\ar[r] & \mA(z,s_i,t_j)|_{s_4\not =0}.}
\]
Therefore, by Proposition \ref{prop:pifib}, we have
\begin{prop}
A general fiber of $\rho_{\Upsilon}\colon \widehat{\Upsilon}^{13}\to \mA(z,s_i,t_j)$ is isomorphic to $\mP^2\times \mP^2$.
\end{prop}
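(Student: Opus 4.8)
The statement to be established is that a general fiber of $\rho_{\Upsilon}\colon \widehat{\Upsilon}^{13}\to \mA(z,s_i,t_j)$ is isomorphic to $\mP^2\times \mP^2$. The natural strategy is to transport the already-proven fiberwise statement for $\rho_{\Sigma}$ (Proposition \ref{prop:pifib}) across the isomorphism constructed in Proposition \ref{Sigma13Upsilon14} and its projectivized version. First I would recall the content of Proposition \ref{Sigma13Upsilon14}: the defining equations of $\Upsilon^{14}_{\mA}$ become, after restricting to $\{s_4\neq 0\}$, essentially the equations of $\Sigma^{13}_{\mA}$ together with a free parameter (the coordinate $s_4$ playing the role of $\mA^{1*}$). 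The key observation, which is already spelled out in the paragraph preceding the statement, is that this isomorphism $\Sigma^{13}_{\mA}\times\mA^{1*}\xrightarrow{\ \sim\ }\Upsilon^{14}_{\mA}\cap\{s_4\neq 0\}$ is \emph{linear} in the coordinates that are "vertical" for the two fibration structures — namely $\bm{p},p_4,\bm{q},r,u$ on the $\Sigma$ side and $x_1,x_2,y_1,y_2,p_1,p_2,p_3,p_4,u$ on the $\Upsilon$ side (the precise substitution $x_1=q_3$, $x_2=q_2$, $y_1=q_1-s_{13}q_3$, etc., from the proof of Proposition \ref{Sigma13Upsilon14} exhibits this linearity explicitly).

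\textbf{Key steps.} The first step is to promote the affine isomorphism of Proposition \ref{Sigma13Upsilon14} to an isomorphism of the projectivized-with-parameters varieties: since the substitution is linear in the vertical coordinates and polynomial (in fact graded) in the horizontal ones, it descends to an isomorphism $(\widehat{\Sigma}^{13}|_{s_{33}=1})\times\mA^{1*}\to\widehat{\Upsilon}^{13}|_{s_4\neq 0}$, and simultaneously induces an isomorphism on the base, $\mA(S',\bm{t})\times\mA^{1*}\to\mA(z,s_i,t_j)|_{s_4\neq 0}$, compatible with the two projections $\rho_{\Sigma}$ and $\rho_{\Upsilon}$; this is exactly the commutative square displayed just above the statement. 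The second step is then immediate: under a commutative square of this shape, the $\rho_{\Upsilon}$-fiber over a point of $\{s_4\neq 0\}$ is isomorphic to the $\rho_{\Sigma}$-fiber over the corresponding point of $\mA(S',\bm{t})$ (the extra $\mA^{1*}$ factor on the base contributes nothing to the fiber). By Proposition \ref{prop:pifib}, a general such $\rho_{\Sigma}$-fiber is $\mP^2\times\mP^2$ — explicitly, the locus where ${{\empty}^t\bm{t}}S^{\dagger}\bm{t}\neq 0$ is a dense open subset of the base — so its image under the base isomorphism is a dense open subset of $\{s_4\neq 0\}$, hence dense open in all of $\mA(z,s_i,t_j)$. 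Therefore the general $\rho_{\Upsilon}$-fiber is $\mP^2\times\mP^2$.

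\textbf{Main obstacle.} There is no serious obstacle; the essential work has already been done in Propositions \ref{Sigma13Upsilon14} and \ref{prop:pifib}. The only point requiring a modicum of care is verifying that the isomorphism of Proposition \ref{Sigma13Upsilon14} genuinely respects the \emph{weight-one} projectivization on both sides — i.e. that treating the vertical variables as homogeneous coordinates of weight one is preserved — which is exactly the "linear in the vertical coordinates" remark, and follows by inspection of the explicit formulas $T_1=s_{13}t_3-t_1$, $x_1=q_3$, $x_2=q_2$, $y_1=q_1-s_{13}q_3$, $y_2=r-(s_{12}q_3-q_1s_{23})$, $z=-s_{11}+s_{13}^2$, $P_3=p_3+p_1s_{13}+p_2s_{23}+\tfrac12 p_4 t_3$ appearing in the proof of Proposition \ref{Sigma13Upsilon14}: the first group and $z$ involve only base coordinates, while the second group is linear in $\bm{p},p_4,\bm{q},r$ with coefficients that are polynomial in the base coordinates. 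A secondary, purely bookkeeping point is that on $\Upsilon^{14}_{\mA}\cap\{s_4\neq 0\}$ the two equations in $(\ref{eq:codim2})$ can be rewritten (dividing by $s_4$ and subtracting a multiple of one from the other, as in the proof of Proposition \ref{Sigma13Upsilon14}) so that the $\{t_4=0,\,s_4=-1\}$ slice is recovered up to the $\mA^{1*}$ factor — but this is again already recorded in that proof. Hence the whole argument is a short deduction, and I would present it as such.
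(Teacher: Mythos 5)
Your proposal is correct and follows essentially the same route as the paper: the paper likewise observes that the isomorphism of Proposition \ref{Sigma13Upsilon14} is linear in the vertical coordinates, hence induces the commutative square relating $\rho_{\Sigma}$ and $\rho_{\Upsilon}$ over $\{s_4\neq 0\}$, and then concludes directly from Proposition \ref{prop:pifib}. The extra care you take about density of the relevant open loci is sound and consistent with the paper's (more terse) deduction.
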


Although we avoid to determine all the $\rho_{\Upsilon}$-fibers, we see that the $\rho_{\Upsilon}$-fiber over the origin of $\mA(z,s_i,t_j)$ is 
\[
\mP(x_1,x_2,y_1,y_2,u)\cup \mP(x_1,x_2,x_3,x_4,p_2,p_4)\cup \left\{\rank \begin{pmatrix}
y_1& y_2 &  p_3&  p_1z\\
 x_1& x_2 & - p_1 & p_3 \end{pmatrix}\leq 1\right\}.
\]
By Proposition \ref{prop:pifib}, there is no $\rho_{\Sigma}$-fiber isomorphic to the $\rho_{\Upsilon}$-fiber over the origin of $\mA(z,s_i,t_j)$. Therefore we have the following:
\begin{prop}
\label{nonisoSIUp}
There is no isomorphism $\widehat{\Sigma}^{13}\to \widehat{\Upsilon}^{13}$ such that the following diagram is commutative with some isomorphism $\mA(S,\bm{t})\to \mA(z,s_i,t_j)$:
\[
\xymatrix{\widehat{\Sigma}^{13}\ar[r]\ar[d]_{\rho_{\Sigma}} & \widehat{\Upsilon}^{13}\ar[d]^{\rho_{\Upsilon}}\\
\mA(S,\bm{t})\ar[r] & \mA(z,s_i,t_j)}.
\]
\end{prop}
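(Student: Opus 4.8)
\textbf{Proof proposal for Proposition \ref{nonisoSIUp}.}

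The plan is to argue by contradiction, using the invariance of fiberwise isomorphism types under the hypothetical commuting square. Suppose such an isomorphism $\Phi\colon \widehat{\Sigma}^{13}\to \widehat{\Upsilon}^{13}$ exists, sitting over an isomorphism $\phi\colon \mA(S,\bm{t})\to \mA(z,s_i,t_j)$. Then for every point $x\in \mA(S,\bm{t})$, the map $\Phi$ restricts to an isomorphism of schemes between the $\rho_{\Sigma}$-fiber over $x$ and the $\rho_{\Upsilon}$-fiber over $\phi(x)$. In particular, the assignment $x\mapsto (\text{isomorphism type of }\rho_{\Sigma}\text{-fiber over }x)$ must match, under $\phi$, the analogous assignment for $\rho_{\Upsilon}$; so the \emph{set} of isomorphism types occurring among $\rho_{\Sigma}$-fibers must equal the set of isomorphism types occurring among $\rho_{\Upsilon}$-fibers. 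It therefore suffices to exhibit one $\rho_{\Upsilon}$-fiber whose isomorphism type does not appear anywhere in the list of $\rho_{\Sigma}$-fibers.

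The candidate is the $\rho_{\Upsilon}$-fiber over the origin of $\mA(z,s_i,t_j)$, which (as computed just before the statement) is
\[
\mP(x_1,x_2,y_1,y_2,u)\cup \mP(x_1,x_2,x_3,x_4,p_2,p_4)\cup \left\{\rank \begin{pmatrix}
y_1& y_2 &  p_3&  p_1z\\
 x_1& x_2 & - p_1 & p_3 \end{pmatrix}\leq 1\right\},
\]
a reducible variety with a $\mP^4$-component, a $\mP^5$-component, and a determinantal component. First I would record the basic discrete invariants of this scheme: its irreducible components, their dimensions, and the pattern of their pairwise intersections (for instance, the presence of both a $4$-dimensional and a $5$-dimensional linear component meeting along a prescribed locus). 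Then I would run down the complete enumeration of $\rho_{\Sigma}$-fibers supplied by Proposition \ref{prop:pifib} — Tables \ref{Tablefib1} and \ref{Tablefib2}, together with the two cases $S=O$ — and check, case by case, that none of these has the same collection of invariants: the $\mP^2\times \mP^2$ and $\mP^{2,2}$ fibers are irreducible, the $\mP^1\times \mP^3\cup Q^4$, $(\mP^1\times \mP^3)'\cup Q^4$, and $Q^5\cup Q^4$ fibers have the wrong component types or the wrong number of components, and the most degenerate $\rho_{\Sigma}$-fiber, $\mP^4\cup \mP^4\cup \mP^4\cup Q^5\cup Q^5$, has three linear $\mP^4$'s and two quadric $\mP^5$'s rather than one $\mP^4$, one $\mP^5$, and a determinantal quartic. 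Since the list is finite and explicit, this comparison is routine once the invariants are chosen; the resulting mismatch contradicts the existence of $\Phi$, proving the proposition.

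The one genuine point requiring care — the part I would single out as the main obstacle — is pinning down the isomorphism type (or at least a complete enough set of isomorphism invariants) of the determinantal component $\left\{\rank\!\begin{pmatrix} y_1& y_2 & p_3& p_1z\\ x_1& x_2 & -p_1 & p_3\end{pmatrix}\le 1\right\}$ and of how the three components of the origin fiber of $\rho_{\Upsilon}$ glue, carefully enough that no $\rho_{\Sigma}$-fiber could possibly match it; I would also double-check that the $\rho_{\Sigma}$-enumeration in Proposition \ref{prop:pifib} is genuinely exhaustive over \emph{all} of $\mA(S,\bm{t})$, not only over an open dense locus, so that the argument is not evading some boundary stratum. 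Once those two checks are in place, the contradiction is immediate and the proof closes. (As an expository remark I would add that this gives a precise sense in which $\Sigma^{13}_{\mA}$ and $\Upsilon^{14}_{\mA}$, though closely related via Proposition \ref{Sigma13Upsilon14}, are not ``the same'' $\mP^2\times\mP^2$-fibration, supporting the inference recorded after Proposition \ref{Sigma13Upsilon14} that No.24078 is not obtainable from $\Upsilon^{14}_{\mA}$.)
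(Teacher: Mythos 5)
Your proposal is correct and follows essentially the same route as the paper: the paper likewise observes that a commuting isomorphism would have to match fibers, computes the $\rho_{\Upsilon}$-fiber over the origin, and concludes from the exhaustive fiber list in Proposition \ref{prop:pifib} that no $\rho_{\Sigma}$-fiber is isomorphic to it. Your extra care about choosing discriminating invariants and about the exhaustiveness of the enumeration is exactly the (routine) verification the paper leaves implicit.
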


\subsection{Duality of conics}
\label{sub:dual}
In this subsection, we point out the classical duality of conics is hidden in projective geometry of the $\mP^2\times\mP^2$-fibration $\rho_{\Sigma}\colon \colon \widehat{\Sigma}^{13}\to \mA(S,\bm{t})$.

Denote by $N:=\begin{spmatrix} n_{11} & n_{12} & n_{13} \\ n_{21} & n_{22} & n_{23} \\ n_{31} & n_{32} & n_{33} \end{spmatrix}$
the matrix as in (\ref{eq:P2P2}). Consider the projection of the $\mP^2\times \mP^2$ defined by $\rank N\leq 1$
from the $\mP^1\times \mP^1$ defined by the condition that $\det \begin{spmatrix} n_{11} & n_{12} \\ n_{21} & n_{22}  \end{spmatrix}=0$.
It is classically known (and it is easy to figure out) that 
\begin{itemize}
\item
this projection is birational and the target is the $4$-dimensional projective space $\mP(n_{13},n_{23},n_{31},n_{32},n_{33})$,
\item
the exceptional locus consists of the two divisors $E_1$ and $E_2$ defined by
\[
\rank \begin{spmatrix} n_{11} & n_{12} & n_{13} \\ n_{21} & n_{22} & n_{23}  \end{spmatrix}\leq 1,\ \text{and}\
\rank
\begin{spmatrix} n_{11} & n_{12} \\ n_{21} & n_{22}  \\ n_{31} & n_{32} \end{spmatrix}\leq 1,
\]
respectively.
\item
The images of $E_1$ and $E_2$ by the projection are
the skew lines $\{(n_{13}:n_{23})\}$ and $\{(n_{31}:n_{32})\}$ respectively in $\mP(n_{13},n_{23},n_{31},n_{32})\simeq \mP^3$.
\end{itemize} 

Now we consider the situation of Proposition \ref{prop:pifib}; the $\rho_{\Sigma}$-fiber over a point of $\mA(S,\bm{t})$ is $\mP^2\times \mP^2$ if and only if 
$\rank S=2, 3$ and  ${{\empty}^t\bm{t}}S^{\dagger} \bm{t}\not =0$. Hereafter in this subsection, we fix a pair of $S$ and $\bm{t}$ with this conditions. 
In this situation, we can check that the quadric $\det \begin{spmatrix} n_{11} & n_{12} \\ n_{21} & n_{22}  \end{spmatrix}=0$ is equal to
$\{{\empty^t \bm{p}}S\bm{p}+p_4 {\empty^t \bm{p}}\bm{t}=0\}$ as in (\ref{Key1}) (by the $\GL_3$- and $(\mC^*)^6$-actions on $\mA(S,\bm{t})$, we may assume that $S$ and $\bm{t}$ are of the form as in the proof of Proposition \ref{prop:pifib}. Then the check becomes easier). This observation leads us to consider the relative projection of $\SigmaA$ over $\mA(S,\bm{t})$ from the family of quadrics  $\{{\empty^t \bm{p}}S\bm{p}+p_4 {\empty^t \bm{p}}\bm{t}=0\}$. Then we see that the target of the projection is the $\mP^4$-bundle
$\mP(\bm{q},r,u)\times \mA(S,\bm{t})$ and the projection is birational.
We may also check that, in the case that $\rank S=2, 3$ and  ${{\empty}^t\bm{t}}S^{\dagger} \bm{t}\not =0$, the two skew lines $\{(n_{13}:n_{23})\}$ and $\{(n_{31}:n_{32})\}$ as above is defined by $\{u=F_1=F_2=F_3=F_4=0\}$ in $\mP(\bm{q},r,u)\times \mA(S,\bm{t})$.

We observe that the images of these skew lines by the projection to $\mP(\bm{q})=\mP^2$ have a nice projective geometric meaning. Assume that $\rank S=3$ and  ${{\empty}^t\bm{t}}S^{\dagger} \bm{t}\not =0$. By the $\GL_3$- and $(\mC^*)^6$-actions on $\mA(S,\bm{t})$, we may assume that $S$ and $\bm{t}$ are of the form as in the proof of Proposition \ref{prop:pifib}. Then we may read off that the the images of these skew lines by the projection to $\mP(\bm{q})$ are the two lines $\{q_1-t_1 t_2 q_2-t_1 t_3 q_3+\sqrt{1-t_1^2}(t_2 q_3-t_3 q_2)=0\}$ and $\{q_1-t_1 t_2 q_2-t_1 t_3 q_3-\sqrt{1-t_1^2}(t_2 q_3-t_3 q_2)=0\}$. These two lines are nothing but the two tangent lines to the conic $\{{\empty^t \bm{q}}S^{\dagger}\bm{q}=0\}$ drawn from the point $[\bm{t}]$. We can check this last assertion similarly also in the case that $\rank S=2$ and  ${{\empty}^t\bm{t}}S^{\dagger} \bm{t}\not =0$.

On the other hand, the conic $\{{\empty^t \bm{p}}S\bm{p}=0\}$ dual to the conic $\{{\empty^t \bm{q}}S^{\dagger}\bm{q}=0\}$ appears in the quadric $\{{\empty^t \bm{p}}S\bm{p}+p_4 {\empty^t \bm{p}}\bm{t}=0\}$. Consider the projection of this quadric to $\mP(\bm{p})\simeq \mP^2$ from the $p_4$-point. Then it is easy to check that the projection is isomorphic outside the two points in 
$\{{\empty^t \bm{p}}S\bm{p}=0\}\cap \{{\empty^t \bm{t}}\bm{p}=0\}$ (note that this actually consists of two points by the condition
that ${{\empty}^t\bm{t}}S^{\dagger} \bm{t}\not =0$). Therefore the two points dual to the two tangent lines of $\{{\empty^t \bm{q}}S^{\dagger}\bm{q}=0\}$ appears in this situation.

\subsection{More on singularities}
\label{sub:More}
In this subsection, we use the notation of the subsection \ref{Charts} and the section \ref{13} freely.
We have the following descriptions of the singularities of $\Sigma^{14}_{\mA}$: 
\begin{prop}
\label{prop:SigmaTerm}
The variety $\Sigma^{14}_{\mA}$ has only terminal singularities with the following descriptions:
\begin{enumerate}[(1)]
\item
$\Sigma^{14}_{\mA}$ has $c(G(2,5))$-singularities along the $7$-dimensional loci ${\sf S}_1$--${\sf S}_4$, where we call a singularity isomorphic to the vertex of the cone over $G(2,5)$ a $c(G(2,5))$-singularity.

\item There exists a primitive $K$-negative divisorial extraction
$f\colon \widetilde{\Sigma}\to \Sigma^{14}_{\mA}$
such that 
\begin{enumerate}[(i)]
\item
singularities of $\widetilde{\Sigma}$ are only $c(G(2,5))$-singularities along  the strict transforms of $\mA(p_4, S)$ and $\overline{\sf S}$, and 
\item
for the $f$-exceptional divisor $E_{\Sigma}$, the morphism $f|_{E_{\Sigma}}$ can be identified with $\rho_{\Sigma}\colon \widehat{\Sigma}^{13}\to \mA(S,\bm{t})$ as in the subsection \ref{fib13}.
\end{enumerate}
\end{enumerate}
\end{prop}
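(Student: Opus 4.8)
The plan is to derive everything from the chart descriptions in \S\ref{Charts} together with the Tom-unprojection structure established in \S\ref{sub:Tom14}. For assertion (1), I would first recall from \S\ref{Charts} that on the $q_i$-chart $(i=1,2,3)$ the variety $\Sigma^{14}_{\mA}$ is cut out by the five $4\times4$ Pfaffians of the skew-symmetric matrix (\ref{q1Pf}) (and its $q_2,q_3$-analogues), and from the $p_4$-chart that it is cut out by the Pfaffians of (\ref{eq:Pf}) with $p_4=1$. Along ${\sf S}_i$ one checks by the explicit equations that, after the local coordinate change making the chart a product with $\mA^{1*}$, the singularity is defined by the $4\times4$ Pfaffians of a generic $5\times5$ skew-symmetric matrix whose entries form part of a regular system of parameters; since that is precisely the local model of the affine cone over $G(2,5)$, one gets a $c(G(2,5))$-singularity along each ${\sf S}_i$. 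Terminality of $\Sigma^{14}_{\mA}$ then follows because the cone over $G(2,5)$ is a well-known terminal (Gorenstein, canonical of index... ) singularity, and along $\mA(p_4,S)$ and $\mA(S,\bm t)$ one argues similarly (or notes these loci lie in the closures of the ${\sf S}_i$ or are handled by the same local model); here I would be careful that Proposition~\ref{prop:SinglociSigma} already pins down $\Sing\Sigma^{14}_{\mA}$ exactly, so it suffices to identify the singularity type at a general point of each component and invoke that terminality is an open condition plus inversion of adjunction/small modification arguments.

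For assertion (2) the natural candidate for $f\colon\widetilde\Sigma\to\Sigma^{14}_{\mA}$ is the blow-up (or a weighted blow-up) of $\Sigma^{14}_{\mA}$ along the locus $\mA(S,\bm t)=\{{\bm p}={\bm q}=p_4=r=u=0\}$, equivalently the extraction whose exceptional divisor recovers the $\mathbb P^2\times\mathbb P^2$-fibration $\rho_\Sigma\colon\widehat\Sigma^{13}\to\mA(S,\bm t)$ of \S\ref{fib13}. Indeed, since every defining equation of $\Sigma^{14}_{\mA}$ is homogeneous of degree two in the variables $\bm p,p_4,\bm q,r,u$ with coefficients that are polynomials in $S,\bm t$, projectivizing these degree-two variables produces exactly $\widehat\Sigma^{13}$, and the morphism $\widehat\Sigma^{13}\to\Sigma^{14}_{\mA}$ (scaling the projective coordinates back to affine ones, i.e.\ the total space of $\mathcal O(-1)$ over the fibration restricted to $\widehat\Sigma^{13}$, suitably modified) is the desired extraction with $E_\Sigma\cong\widehat\Sigma^{13}$ and $f|_{E_\Sigma}=\rho_\Sigma$. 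I would verify (i) by a chart-by-chart computation on $\widetilde\Sigma$: away from $\mA(S,\bm t)$ the map $f$ is an isomorphism, so $\Sing\widetilde\Sigma$ lives over $\mA(S,\bm t)$; restricting to the affine charts $\{q_i\neq0\}$ etc.\ of $\widetilde\Sigma$ one reads off that the only singularities are again $c(G(2,5))$-singularities, now along the strict transforms of $\mA(p_4,S)$ and $\overline{\sf S}$ (the loci ${\sf S}_4$ and the $\overline{\sf S}_i$ are the parts of $\Sing\Sigma^{14}_{\mA}$ that are \emph{not} contracted and whose strict transforms survive, while $\mA(S,\bm t)$ is blown up). That $f$ is $K$-negative and primitive follows from computing the discrepancy of $E_\Sigma$ via the free resolution / canonical class formula (\ref{eq:KSigma}) and noting $\rho(\widetilde\Sigma/\Sigma^{14}_{\mA})=1$; the fibration structure of $E_\Sigma$ guarantees the extraction cannot factor nontrivially, hence primitivity.

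The main obstacle I anticipate is two-fold. First, in assertion (1), making rigorous the claim "$c(G(2,5))$-singularity along a $7$-dimensional locus" requires more than matching the defining Pfaffians locally: one must produce, at a general point of each ${\sf S}_i$, an \'etale (or formal) splitting $\widehat{\mathcal O}_{\Sigma,x}\cong \widehat{\mathcal O}_{c(G(2,5)),0}\hat\otimes\mathbb C[[\text{7 variables}]]$, which means checking that the $10$ matrix entries minus the relations give the right embedding dimension and that the transverse slice is exactly the $G(2,5)$-cone and not a degeneration of it; the explicit matrices (\ref{q1Pf}) make this a finite but genuinely delicate linear-algebra verification, and one has to handle $\mA(p_4,S)$ and $\mA(S,\bm t)$, where the transverse picture might a priori differ. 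Second, in (2), identifying the correct extraction: a naive blow-up of $\mA(S,\bm t)$ need not be normal, $\mathbb Q$-factorial, or $K$-negative, so I would instead construct $\widetilde\Sigma$ directly as the appropriate closure inside $\widehat\Sigma^{13}\times_{\mA(S,\bm t)}(\text{affine cone directions})$ — essentially the "Kawamata blow-up" weighted by the grading — and then check normality of $\widetilde\Sigma$ using the chart descriptions (each chart of $\widetilde\Sigma$ being, as on $\Sigma^{14}_{\mA}$ itself, an open subset of a complete intersection or a Pfaffian locus of the expected codimension), after which $S_2$ plus regular-in-codimension-one gives normality and the discrepancy computation gives $K$-negativity. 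Once normality and the explicit chart equations are in hand, (i) and (ii) are bookkeeping; the conceptual work is entirely in setting up $\widetilde\Sigma$ so that $E_\Sigma=\widehat\Sigma^{13}$ holds on the nose.
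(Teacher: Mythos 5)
Your overall architecture for assertion (2) matches the paper's: the paper also takes $f$ to be the (reduced) blow-up of $\Sigma^{14}_{\mA}$ along $\mA(S,\bm{t})$, uses the fact that every defining equation is quadratic in $\bm{p},p_4,\bm{q},r,u$ to identify $E_{\Sigma}$ with $\widehat{\Sigma}^{13}$ and, crucially, to conclude that the charts of $\widetilde{\Sigma}$ are isomorphic to the $p_i$-, $q_j$-, $r$-, $u$-charts of $\Sigma^{14}_{\mA}$ itself (whence $\widetilde{\Sigma}$ has only $c(G(2,5))$-singularities along the strict transforms of $\mA(p_4,S)$ and $\overline{\sf S}$), and then computes the discrepancy $a=2$ by restricting $K_{\widetilde{\Sigma}}=f^*K_{\Sigma^{14}_{\mA}}+aE_{\Sigma}$ to a general fiber $\mP^2\times\mP^2$ of $\rho_{\Sigma}$. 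Where you diverge is primitivity: you assert $\rho(\widetilde{\Sigma}/\Sigma^{14}_{\mA})=1$ and that ``the fibration structure guarantees'' extremality, whereas the paper runs a $K$-MMP over $\Sigma^{14}_{\mA}$ with scale a multiple of $-E_{\Sigma}$, uses the UFD property of $R_{\Sigma}$ to pin down the target, and invokes uniqueness of $K$-negative contractions. Your route can be made to work (local factoriality of $c(G(2,5))$-points plus the UFD property do give relative Picard rank one), but as written it is an assertion, not an argument.

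The genuine gap is in your treatment of terminality along the $9$-dimensional locus $\mA(S,\bm{t})$. You propose to handle it in part (1) by claiming it ``lies in the closures of the ${\sf S}_i$'' or is ``handled by the same local model,'' and by invoking openness of terminality at general points of each component. All of these fail: $\mA(S,\bm{t})$ is $9$-dimensional while $\overline{\sf S}$ and $\mA(p_4,S)$ are $7$-dimensional, so it is not contained in their closures; the transverse singularity at a general point of $\mA(S,\bm{t})$ is the cone over the Segre-embedded $\mP^2\times\mP^2$ (and degenerates further on special strata, as Proposition \ref{prop:pifib} shows), not $c(G(2,5))$; and checking the singularity type only at general points of the components of $\Sing\Sigma^{14}_{\mA}$ cannot establish terminality of the whole variety, since the non-terminal locus is closed and could sit inside special strata. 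The only way to conclude terminality along all of $\mA(S,\bm{t})$ in this framework is the deduction the paper makes and you never state: $\widetilde{\Sigma}$ is terminal everywhere (by the chart/quadraticity argument), the discrepancy of $E_{\Sigma}$ is $a=2>0$, and every exceptional divisor over $\Sigma^{14}_{\mA}$ with center in $\mA(S,\bm{t})$ is either $E_{\Sigma}$ or a divisor over $\widetilde{\Sigma}$, so all discrepancies are positive. Your proposal contains the ingredients for this in part (2) but closes the loop with incorrect reasoning in part (1); relatedly, your parenthetical ``$\Sing\widetilde{\Sigma}$ lives over $\mA(S,\bm{t})$'' is also a slip, since the singular locus of $\widetilde{\Sigma}$ is precisely the strict transforms of $\mA(p_4,S)$ and $\overline{\sf S}$, which are not contained in $E_{\Sigma}$.
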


\begin{proof}
The descriptions of the singularities of $\Sigma^{14}_{\mA}$ outside $\mA(S,\bm{t})$ are already given in the subsection \ref{Charts}.

Here we check
the singularities of $\Sigma^{14}_{\mA}$ along $\mA(S,\bm{t})$.
Note that, by Proposition \ref{prop:SinglociSigma},
$\mA(S,\bm{t})$ is contained in $\Sing \Sigma^{14}_{\mA}$.

\vspace{3pt}

\noindent {\bf Step 1.} Let $\widetilde{\Sigma}$ be the variety obtained by
blowing up $\Sigma^{14}_{\mA}$ along $\mA(S,\bm{t})$ and taking the reduced
structure of the blow-up. We denote by $f\colon \widetilde{\Sigma}\to \Sigma^{14}_{\mA}$ the natural induced morphism.
We show that 
$\widetilde{\Sigma}$ has only $c(G(2,5))$-singularities as stated in (2) (i).  

A crutial fact is that any equation of $\Sigma^{14}_{\mA}$ is of degree two if 
we regard entries of $S,\bm{t}$ as constants. Therefore, 
$\widetilde{\Sigma}$ has the same singularities as the $p_i$-, $q_j$-, $u$-, $r$-charts $(1\leq i, j\leq 3)$ of $\Sigma^{14}_{\mA}$, and then, by the subsection \ref{Charts},
$\widetilde{\Sigma}$ has also only $c(G(2,5))$-singularities, which are Gorenstein terminal singularities.
Since $\widetilde{\Sigma}$ has only $c(G(2,5))$-singularities, the singular locus of  $\widetilde{\Sigma}$ is a smooth $7$-dimensional closed subset. Moreover, it contains the strict transforms of ${\sf S}_1$--${\sf S}_4$ and, by the descriptions of the charts of $\widetilde{\Sigma}$, the restriction of the singular locus of $\widetilde{\Sigma}$ to the $f$-exceptional divisor is $6$-dimensional. Therefore, the singular locus of $\widetilde{\Sigma}$ is the closure of the strict transforms of ${\sf S}_1$--${\sf S}_4$, and it is nothing but the closure of the strict transoforms of $\mA(p_4,S)$ and $\overline{\sf S}$.

\vspace{3pt}
 
\noindent {\bf Step 2.}
We show that $\Sigma^{14}_{\mA}$ has only Gorenstein terminal singularities also along $\mA(S,\bm{t})$.

Let $E$ be the exceptional divisor of the blow-up $\widetilde{\mA}\to \mA^{18}$ along $\mA(S,\bm{t})$.
Let $P_1,\dots,R, U$ be the projective coordinates of $\widetilde{\mA}$ corresponding to $p_1,\dots,r, u$.
We see that the $f$-exceptional divisor
$E_{\Sigma}=E\cap \widetilde{\Sigma}$ is 
defined with the equation of 
$\Sigma^{14}_{\mA}$
by replacing 
$p_1,\dots, r,u$
with $P_1,\dots,R,U$. Hence $E_{\Sigma}\to \mA(S,\bm{t})$ can be identified with the restriction of $\widehat{\Sigma}^{13}\to \mA(S,\bm{t})$ over $\mA(S,\bm{t})$.
Therefore $E_{\Sigma}$ is irreducible and reduced since so is $\Sigma^{14}_{\mA}$ by Proposition \ref{prop:Tom14} (2).
By Proposition \ref{prop:Tom14} (1) and (3),
the variety 
$\Sigma^{14}_{\mA}$ is normal and Gorenstein, and by Step 1, so is $\widetilde{\Sigma}$.
Thus
we may write 
\begin{equation}
\label{eq:SigmaAdj}
K_{\widetilde{\Sigma}}=f^*K_{\Sigma^{14}_{\mA}}+aE_{\Sigma}
\end{equation}
with some integer $a$.
Note that a general fiber of $E_{\Sigma}\to \mA(S,\bm{t})$ is $\mP^2\times \mP^2$ by the results in the subsection \ref{fib13}.
Therefore we see that $a=2$ restricting (\ref{eq:SigmaAdj}) to a general fiber.
This implies that $\Sigma^{14}_{\mA}$ has only Gorenstein terminal singularities also along $\mA(S,\bm{t})$ since so does $\widetilde{\Sigma}$ by Step 1.

\noindent {\bf Step 3.} Finally we show that $f$ is a $K$-negative primitive divisorial extraction.

Note that $-E_{\Sigma}$ is $f$-ample by a general property of blow-up.
Thus $f$ is $K$-negative by (\ref{eq:SigmaAdj}) with $a=2$.
Since $\Sigma^{14}_{\mA}$ is quasi-affine and $-E_{\Sigma}$ is $f$-ample,
we may find an effective divisor $D$ such that $D$ is linearly equivalent to $n(-E_{\Sigma})$ for sufficiently big $n\in \mN$ and 
$(\widetilde{\Sigma},1/n D)$ is a klt pair since $\widetilde{\Sigma}$ has only terminal singularities. Note that $\widetilde{\Sigma}$ is locally factorial since so is a $c(G(2,5))$-singularity. Therefore, by \cite[Cor.~1.4.1 and 1.4.2]{bchm}, we may run the $K$-MMP over $\Sigma^{14}_{\mA}$ with scale $1/n D$.
By the choice of the scale, the strict transforms of $E_{\Sigma}$ are always negative for the contractions when we run the $K$-MMP.
Therefore the final contraction of the $K$-MMP is the contraction of the strict transforms of $E_{\Sigma}$. We denote by $f'$ this contraction, by ${\Sigma}'$ the source of $f'$ and by ${E}'_{\Sigma}$ the strict transforms of $E_{\Sigma}$ on $\Sigma'$. Since $R_{\Sigma}$ is a UFD by Proposition \ref{prop:Tom14} (3), we see that the target of $f'$ is equal to $\Sigma^{14}_{\mA}$ and 
$f'({E}'_{\Sigma})=\mA(S,\bm{t})$.
Since the $K$-MMP induces a birational map $\widetilde{\Sigma}\dashrightarrow \Sigma'$ which is isomorphism in codimension one, $f$ and $f'$ have the common target $\Sigma^{14}_{\mA}$, and both $f$ and $f'$ are $K$-negative contractions,
we conclude that the induced map $\widetilde{\Sigma}\dashrightarrow \Sigma'$ is actually an isomorphism by \cite[Lem.~5.5]{JAGFano} for example.
\end{proof}

By Lemma \ref{lem:1314} and Proposition \ref{prop:SigmaTerm},
We immediately have the following descriptions of the singularities of 
$\Sigma^{13}_{\mA}$: 
\begin{prop}
\label{prop:SigmaTerm13}
The variety $\Sigma^{13}_{\mA}$ has only terminal singularities with the following descriptions:
\begin{enumerate}[$(1)$]
\item
$\Sigma^{13}_{\mA}$ has $c(G(2,5))$-singularities along the $6$-dimensional loci ${{\sf S}'_3}$ and ${{\sf S}'_4}$.

\item There exists a primitive $K$-negative divisorial extraction
$f'\colon {\widetilde{\Sigma}}^{'}\to \Sigma^{13}_{\mA}$
such that 
\begin{enumerate}[$(i)$]
\item
singularities of $\widetilde{\Sigma}^{'}$ are only $c(G(2,5))$-singularities along the strict transforms of 
$\mA(p_4, S')$ and ${\overline{{\sf S}}'}$, and 
\item
for the $f'$-exceptional divisor $E'_{\Sigma}$, the morphism $f'|_{E'_{\Sigma}}$ can be identified with $\rho_{\Sigma}\colon \widehat{\Sigma}^{13}\to \mA(S,\bm{t})$ over $\mA(S',\bm{t})$.
\end{enumerate}
\end{enumerate}
\end{prop}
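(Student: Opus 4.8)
The plan is to reduce Proposition \ref{prop:SigmaTerm13} entirely to Proposition \ref{prop:SigmaTerm} via the isomorphism $\Sigma^{14}_{\mA}\cap\{s_{33}\neq 0\}\simeq\Sigma^{13}_{\mA}\times(\mA^1)^*$ of Lemma \ref{lem:1314}, exactly as indicated in the paragraph preceding the statement. First I would observe that terminal singularities, the property of being a $c(G(2,5))$-singularity, and the dimension and irreducibility of singular strata are all preserved under the product with $(\mA^1)^*$ and under taking an open subset. Hence $\Sigma^{13}_{\mA}$ has only terminal singularities, and its singular locus is obtained from $\Sing(\Sigma^{14}_{\mA}\cap\{s_{33}\neq 0\})$ by slicing with $\{s_{33}=1\}$. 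By Proposition \ref{prop:SinglociSigma} the singular locus of $\Sigma^{14}_{\mA}$ is $\mA(p_4,S)\cup\mA(S,\bm t)\cup\overline{\sf S}$; intersecting with $\{s_{33}=1\}$ gives $\mA(p_4,S')\cup\mA(S',\bm t)\cup\overline{\sf S}'$, matching Proposition \ref{prop:Sing13}. The strata ${\sf S}_1,{\sf S}_2$ lie in the hyperplane $\{s_{33}=0\}$ (or more precisely, their closures meet $\{s_{33}=1\}$ inside $\overline{\sf S}'$ via the relations ${\sf S}'_1\cup{\sf S}'_2\subset{\sf S}'_3$ noted in Section \ref{13}), so on $\Sigma^{13}_{\mA}$ the $c(G(2,5))$-loci coming from Proposition \ref{prop:SigmaTerm}(1) are exactly ${\sf S}'_3$ and ${\sf S}'_4$, which are $6$-dimensional since ${\sf S}_i\simeq\mA^6\times\mA^{1*}$ and we are setting one coordinate equal to $1$. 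This yields assertion (1).

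For assertion (2) I would take the primitive $K$-negative divisorial extraction $f\colon\widetilde\Sigma\to\Sigma^{14}_{\mA}$ of Proposition \ref{prop:SigmaTerm}(2) and restrict it over the open set $\{s_{33}\neq 0\}$. The blow-up center $\mA(S,\bm t)$ meets $\{s_{33}\neq 0\}$, and since $f$ is obtained (after reducing) by blowing up that center, its restriction $f^{-1}(\Sigma^{14}_{\mA}\cap\{s_{33}\neq 0\})\to\Sigma^{14}_{\mA}\cap\{s_{33}\neq 0\}$ is again the (reduced) blow-up of the center intersected with this open set. Under the product decomposition of Lemma \ref{lem:1314} this splits as $(\text{blow-up of }\Sigma^{13}_{\mA}\text{ along }\mA(S',\bm t))\times(\mA^1)^*$, because $s_{33}$ is one of the free $(\mA^1)^*$-coordinates and is not in the ideal of the center. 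Defining $f'\colon\widetilde\Sigma'\to\Sigma^{13}_{\mA}$ to be this blow-up (with reduced structure), the properties ``$K$-negative'', ``primitive'', and ``divisorial'' descend from $f$ since forming a product with $(\mA^1)^*$ and restricting to an open subset preserves all of them; likewise the singularities of $\widetilde\Sigma'$ are $c(G(2,5))$-singularities along the strict transforms of $\mA(p_4,S')$ and $\overline{\sf S}'$. For part (2)(ii), the $f'$-exceptional divisor $E'_\Sigma$ is $E_\Sigma\cap\{s_{33}\neq 0\}$ sliced by $\{s_{33}=1\}$; since by Proposition \ref{prop:SigmaTerm}(2)(ii) $f|_{E_\Sigma}$ is identified with $\rho_\Sigma\colon\widehat\Sigma^{13}\to\mA(S,\bm t)$, the restriction over $\mA(S',\bm t)=\mA(S,\bm t)\cap\{s_{33}=1\}$ gives exactly the claimed identification.

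The main obstacle — really the only nontrivial point — is making precise that ``blow up, then reduce'' commutes with restriction to the open set $\{s_{33}\neq 0\}$ and with the product decomposition, i.e. that $\widetilde\Sigma\cap f^{-1}(\{s_{33}\neq 0\})\simeq\widetilde\Sigma'\times(\mA^1)^*$ compatibly with the exceptional divisors. This is a standard base-change property of blowing up along a center whose ideal is generated by elements not involving the extra coordinate, combined with the fact that reduction commutes with flat base change; I would spell it out in one line using that $\mA(S,\bm t)\cap\{s_{33}\neq 0\}$ is the pullback of $\mA(S',\bm t)$ under the projection $\Sigma^{13}_{\mA}\times(\mA^1)^*\to\Sigma^{13}_{\mA}$. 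Once that is granted, everything else is a transcription of Proposition \ref{prop:SigmaTerm} through the isomorphism of Lemma \ref{lem:1314}, and no new geometry is needed; in particular the key input ``a general $\rho_\Sigma$-fiber is $\mP^2\times\mP^2$'', used to compute the discrepancy $a=2$, is inherited verbatim.
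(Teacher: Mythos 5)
Your proposal is correct and follows essentially the same route as the paper, which deduces the proposition directly from Lemma \ref{lem:1314} and Proposition \ref{prop:SigmaTerm}; your extra care about base change of the blow-up under the product decomposition only makes explicit what the paper leaves as ``immediate.'' (The one slip is the opening claim that ${\sf S}_1,{\sf S}_2$ lie in $\{s_{33}=0\}$, which is false as stated, but your parenthetical correction via ${\sf S}'_1\cup{\sf S}'_2\subset{\sf S}'_3$ is the right justification and is exactly what the paper records in Section \ref{13}.)
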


\end{document}